\theoremstyle{plain}
\newtheorem{theorem}{Theorem}
\newtheorem{lemma}[theorem]{Lemma}
\newtheorem{proposition}[theorem]{Proposition}
\theoremstyle{definition}
\newtheorem{definition}[theorem]{Definition}
\newtheorem{example}[theorem]{Example}
\newtheorem{remark}[theorem]{Remark}
\newcommand{\E}{{\mathbb E}}
\newcommand{\N}{{\mathbb N}}
\newcommand{\M}{{\mathbb M}}
\newcommand{\La}{{\mathbb L}}
\newcommand{\Pa}{{\mathcal P}}
\newcommand{\Block}{{\mathfrak B}}
\newcommand{\Ber}{{\mathbb B}}
\newcommand{\Odd}{{\mathbb O}}
\newcommand{\Z}{{\mathbb Z}}
\newcommand{\T}{{\mathbb T}}
\newcommand{\U}{{\mathbb U}}
\newcommand{\I}{{\mathbb I}}
\def\li{\text{\rm Li}}
\def\lif{\text{\rm Lif}}
\title[Restricted $r$-Stirling Numbers and their Combinatorial Applications]{Restricted $r$-Stirling Numbers and their Combinatorial Applications}
\author{Be\'ata B\'enyi}
\address{\noindent Faculty of Water Sciences, National University of Public Service, Budapest, HUNGARY}
\email{beata.benyi@gmail.com}
\author{Miguel M\'{e}ndez}
\address{\noindent Departamento de Matem\'aticas, Facultad de Ciencias, Universidad Antonio Nari\~{n}o, Bogot\'a,  COLOMBIA}
\email{mmendezenator@gmail.com}
\author{Jos\'e L. Ram\'{\i}rez}
\address{\noindent Departamento de Matem\'aticas, Universidad Nacional de Colombia, Bogot\'a,  COLOMBIA}
\email{jlramirezr@unal.edu.co}
\author{Tanay Wakhare}
\address{\noindent University of Maryland, College Park, MD 20742, USA}
\email{twakhare@gmail.com}
\date{\today}
\subjclass[2010]{Primary 11B83, 11B73 ; Secondary 05A19, 05A15.}
\keywords{Set partitions, generalized Stirling numbers,  generating functions, combinatorial identities.}
\begin{document}
\begin{abstract}
We study set partitions with $r$ distinguished elements and block sizes found in an arbitrary index set $S$. The enumeration of these $(S,r)$-partitions leads to the introduction of $(S,r)$-Stirling numbers, an extremely wide-ranging generalization of the classical Stirling numbers and the $r$-Stirling numbers. We also introduce the associated $(S,r)$-Bell and $(S,r)$-factorial numbers. We study fundamental aspects of these numbers, including recurrence relations and determinantal expressions. For $S$ with some extra structure, we show that the inverse of the $(S,r)$-Stirling matrix encodes the M\"obius functions of two families of posets. Through several examples, we demonstrate that for some $S$ the matrices and their inverses involve the enumeration sequences of several combinatorial objects. Further, we highlight how the $(S,r)$-Stirling numbers naturally arise in the enumeration of cliques and acyclic orientations of special graphs, underlining their ubiquity and importance. Finally, we introduce related $(S,r)$ generalizations of the poly-Bernoulli and poly-Cauchy numbers, uniting many past works on generalized combinatorial sequences.

\end{abstract}

\maketitle

\section{Introduction}
Set partitions of a finite set are an important and classical topic in enumerative combinatorics. Extensive work has been conducted concerning enumeration of the total number of set partitions under certain constraints (cf. \cite{mansour}). We define a \emph{partition of a set} $[n]:= \{1, 2, \dots , n\}$ as a collection of pairwise disjoint subsets, called \emph{blocks}, whose union is $[n]$. For a block $\Block$, we denote the cardinality of the block $\Block$ by $|\Block|$. The sequence counting the total number of set partitions of $[n]$ into $k$ non-empty blocks is the \emph{Stirling numbers of the second kind}, denoted by ${n \brace k}$.

There are several important generalizations of the Stirling numbers. One of them is the \emph{$r$-Stirling numbers of the second kind} introduced by Broder \cite{Broder}.   Letting $r$ be a non-negative integer, the $r$-Stirling numbers of the second kind, ${n \brace k}_r$, are defined as the number of set partitions of $[n + r]$ into $k + r$ blocks with the additional condition that the first $r$ elements are in distinct blocks.  The partitions where the first $r$ elements are in distinct blocks are  called \emph{$r$-partitions}, and the elements $1, 2, \dots , r$ are called \emph{special elements}.    It is clear that if $r=0$  we obtain the Stirling numbers of the second kind.

For example, ${2 \brace 1}_{2}=5$, with the relevant partitions being
\begin{gather*}
\{\{\overline{1}\}, \, \{\overline{2}\}, \, \{3, 4\}\}, \quad  \{\{\overline{1}, 3\}, \, \{\overline{2}\}, \, \{4\} \}, \quad \{\{\overline{1}, 4\}, \, \{\overline{2}\}, \, \{3\} \}, \\
\{\{\overline{1}\}, \, \{\overline{2}, 3\}, \, \{4\}\}, \quad \{\{\overline{1}\}, \, \{\overline{2}, 4\}, \, \{3\} \}.
\end{gather*}
Notice that the special elements are overlined.

The $r$-Stirling numbers of the second kind satisfy the following recurrence \cite{Broder}:
\begin{align}\label{defrStir}
{n \brace k}_r=&(k+r){n-1 \brace k}_r+{n-1\brace k-1}_r, \quad n\geq k,
\end{align}
with  ${n \brace k}_r=0$ if $n<k$ and ${n \brace k}_r=1$ if $n=k$.

Mez\H{o} \cite{Mezo} defined the \emph{$r$-Bell numbers}, $B_{n, r}$,  as the number of $r$-partitions of an $n+r$-element set. This is equivalent to
$$B_{n,r}=\sum_{m=0}^n{n \brace m}_r.$$

A natural generalization of the $r$-Stirling number of the second kind arises from considering the restriction that all block sizes are  contained in a set $S\subseteq \Z^{+}$. For $n, k, r \geq 0$ and $S\subseteq \Z^+$, we let $\Pi_{S,r}(n,k)$ denote the set of all $r$-partitions of $[n+r]$ into $k+r$ non-empty blocks, such that the cardinality of each block is contained in the set $S$.  We call this kind of partition an \emph{$(S,r)$-partition}. In particular, we let ${n\brace k}_{S,r}$ denote the cardinality of the set $\Pi_{S,r}(n,k)$, and call this sequence the \emph{$(S,r)$-Stirling numbers of the second kind}.   The total number of $(S,r)$-partitions of $[n+r]$ is the \emph{$(S,r)$-Bell number} $B_{n,S,r}$. It is clear that
$$B_{n,S,r}=\sum_{k=0}^n{n\brace k}_{S,r}.$$

Recently,  Mihoubi and Rahmani \cite{MR2017} studied this new sequence  as a generalization of the partial Bell polynomials. If $S=\{k_1, k_2, \dots \}$, then we have the following  exponential generating functions:
\begin{align}
\sum_{n=k}^\infty { n \brace k }_{S,r} \frac{x^n}{n!}=\frac{1}{k!}\left(\sum_{i\geq 1} \frac{x^{k_i-1}}{(k_i-1)!}\right)^r\left(\sum_{i\geq 1} \frac{x^{k_i}}{k_i!}\right)^k, \label{def:rrs2}\\
\sum_{n=0}^\infty B_{n,S,r} \frac{x^n}{n!}=\left(\sum_{i\geq 1} \frac{x^{k_i-1}}{(k_i-1)!}\right)^r\exp\left(\sum_{i\geq 1} \frac{x^{k_i}}{k_i!}\right).
\end{align}

It is clear that we recover the $r$-Stirling numbers by setting $S=\Z^+=\{1,2,3,\ldots\}$. If we take $S=\{1, 2,  \dots, m\}$ we obtain the restricted $r$-Stirling numbers of the second kind \cite{KJL2}. In a similar way, if we take $S=\{m, m+1, \dots\}$, we recover the associated $r$-Stirling  numbers of the second kind \cite{KJL2}.  Moreover, if $r=0$ we have the $S$-restricted Stirling numbers of the second kind \cite{BenRam, Eng, Wakhare}. \\

We can also obtain the following general trivariate generating function by formally summing \eqref{def:rrs2} over $r$ and $k$, and interchanging the order of summation:
\begin{equation}
\sum_{r=0}^\infty \sum_{n=0}^\infty \left(\sum_{k=0}^n  { n \brace k }_{S,r}y^k \right) \frac{z^rx^n}{r!n!} = \exp\left(y\sum_{i\geq 1} \frac{x^{k_i}}{k_i!} \right)\exp\left(z\sum_{i\geq 1} \frac{x^{k_i-1}}{(k_i-1)!} \right).
\end{equation}
The paranthesized summand can be regarded as an $(S,r)$ generalization of a Bell polynomial, and will be considered later.

In this paper we study $(S,r)$-partitions, building on the work of Mihoubi and Rahmani. In particular, we prove several new combinatorial identities, and provide combinatorial proofs for some known identities. Using the theory of Riordan matrices we present determinantal identities for the generalized Bell and factorial sequences. {Moreover, for $S$ with a specific structure, we give combinatorial interpretations for the inverses of the $(S,r)$-Stirling matrices of both kinds}. Additionally, we present some examples of  $(S,r)$-partitions which naturally arise in graph theory. Finally, we introduce a new family of polynomials which generalizes the poly-Bernoulli numbers and poly-Cauchy numbers.

\section{Some Combinatorial Properties}
\subsection{Recurrence relations}
First, we derive some fundamental recurrence relations satisfied by the $(S,r)$-Stirling numbers and associated $(S,r)$-Bell numbers. We mainly provide combinatorial proofs, but all of the following results have generating function proofs. Consider the generating function for ${n\brace k}_{S,r}$ in the form

\begin{equation}\label{Sgenfunc}
\sum_{n=k}^\infty { n \brace k }_{S,r} \frac{x^n}{n!}=\frac{1}{k!}\left(\sum_{s\in S} \frac{x^{s-1}}{(s-1)!}\right)^r\left(\sum_{s \in S} \frac{x^{s}}{s!}\right)^k.
\end{equation}
A generalization of \ref{Sgenfunc}, the partial $r$-Bell polynomials, was recently introduced by Mihoubi and Rahmani \cite{MR2017}. To convert between their notation and ours we note that their $B^{(r)}_{n+r,k+r}(a_\ell,b_\ell)$ corresponds to our ${n\brace k}_{S,r}$, with
$$a_\ell=b_\ell = \begin{cases}
1,& \thinspace \ell\in S, \\
0,&\thinspace 0\in S;
\end{cases}$$
i.e.,  $a_\ell=b_\ell$ is the indicator function for whether $\ell$ is in our index set $S$. Therefore we can directly use some of their results, while adding some new ones of our own. The following theorem follows from appropriately specializing Mihoubi and Rahmanis' results. We provide combinatorial proofs of independent interest, in contrast to their generating function based proofs.

\begin{theorem}\label{teo1}\cite[Prop. 3]{MR2017}
We have the following recurrences:
\begin{align}
k{n\brace k}_{S,r} &=\sum_{s\in S}  \binom{n}{s}{n-s\brace k-1}_{S,r} \label{id1},  \\
r{n\brace k}_{S,r} &=\sum_{s\in S}r \binom{n}{s-1} {n-s+1\brace k}_{S,{r-1}} \label{id2},
\end{align}
and
\begin{align}
(n+r){n\brace k}_{S,r} =  \sum_{s\in S}s \binom{n}{s} {n-s\brace k-1}_{S,{r}} + r \sum_{s\in S}s \binom{n}{s-1} {n-s+1\brace k}_{S,{r-1}}.  \label{id3}
\end{align}
\end{theorem}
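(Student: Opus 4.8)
The plan is to prove all three recurrences by double counting, reading each as a \emph{pointing} identity on the set $\Pi_{S,r}(n,k)$ of $(S,r)$-partitions. Throughout I will use the standard model of such a partition: it has $r$ \emph{special blocks}, one containing each special element $1,\dots,r$ (together with some non-special elements), and $k$ \emph{ordinary blocks} built entirely from the $n$ non-special elements $r+1,\dots,n+r$, with every block size lying in $S$. The common device is to count pairs (partition, distinguished sub-object) in two ways: the left-hand side records how many sub-objects each partition carries, while the right-hand side reconstructs the partition from the distinguished piece together with a smaller $(S,r)$- or $(S,r-1)$-partition, organized by the size $s\in S$ of the relevant block.

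For \eqref{id1} I would count pairs $(P,\Block)$ where $P\in\Pi_{S,r}(n,k)$ and $\Block$ is one of its $k$ ordinary blocks; this gives the left-hand side $k{n\brace k}_{S,r}$. Grouping by the size $s\in S$ of $\Block$, the block is determined by a choice of its $s$ non-special elements, accounting for $\binom{n}{s}$, and deleting $\Block$ leaves an $(S,r)$-partition of the remaining $n-s$ non-special and $r$ special elements into $k-1$ ordinary blocks, i.e. ${n-s\brace k-1}_{S,r}$ possibilities. Summing over $s$ yields the right-hand side. For \eqref{id2} I would instead point at a special element: count pairs $(P,i)$ with $i\in\{1,\dots,r\}$, giving $r{n\brace k}_{S,r}$. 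The block of $i$ has some size $s\in S$ and is determined by its $s-1$ non-special members, contributing $\binom{n}{s-1}$; excising this block leaves an $(S,r-1)$-partition of the surviving $r-1$ special and $n-s+1$ non-special elements into $k$ ordinary blocks, namely ${n-s+1\brace k}_{S,r-1}$. Here the factor $r$ is a passenger, matched on both sides by the choice of the pointed special element $i$, so after stripping it the recurrence is simply the classification of $(S,r)$-partitions by the block of a fixed special element.

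Finally, \eqref{id3} is the ``point at an arbitrary element'' version: count pairs $(P,x)$ with $x\in[n+r]$, giving $(n+r){n\brace k}_{S,r}$, and split according to whether $x$ lies in an ordinary or a special block. If $x$ is in an ordinary block of size $s$, choosing the block's $s$ elements ($\binom{n}{s}$), then $x$ among them ($s$ ways), then partitioning the rest, produces the first sum $\sum_{s\in S} s\binom{n}{s}{n-s\brace k-1}_{S,r}$; if $x$ is in a special block, choosing which of the $r$ special elements heads it, its $s-1$ non-special companions ($\binom{n}{s-1}$), $x$ among the $s$ members of that block ($s$ ways), and the residual $(S,r-1)$-partition, produces the second sum.

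The step I expect to require the most care is the excision of a special block in \eqref{id2} and in the second term of \eqref{id3}: removing the block of the pointed special element $i$ leaves the special elements $\{1,\dots,r\}\setminus\{i\}$ rather than $\{1,\dots,r-1\}$, so I must invoke the canonical order-preserving relabeling of the survivors to realize the remainder as a genuine element of $\Pi_{S,r-1}$, and then verify that the reconstruction (re-inserting the block $\{i\}$ union its chosen companions) is a genuine bijection. In particular I must check that the factor $r$ counts each pointed special block exactly once rather than over-counting, since in a valid $(S,r)$-partition the special element lying in $x$'s block is uniquely determined by the pair $(P,x)$.
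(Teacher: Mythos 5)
Your proposal is correct and follows essentially the same route as the paper: all three identities are proved by the same pointing/colouring double counts (a marked ordinary block for \eqref{id1}, a marked special element for \eqref{id2}, and a marked arbitrary element split by block type for \eqref{id3}), with identical decompositions by the block size $s\in S$. Your extra care about relabeling the surviving special elements after excising a special block is a point the paper glosses over but is handled exactly as you describe.
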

\begin{proof}
The left-hand side of \eqref{id1} counts the total number of elements in $\Pi_{S,r}(n,k)$ such that one of the non-special blocks is coloured. Suppose that the coloured non-special block has size $s$, so that this block can be constructed in $\binom{n}{s}$ ways. The remaining $n-s$ non-special elements form a $(S,r)$-partition into  $k-1$ non-empty blocks, which can be constructed in ${n-s\brace k-1}_{S,r}$ ways. Summing over $s$ completes the argument.\\

For the second identity \eqref{id2}, the left-hand side counts $(S,r)$-partitions with a coloured special block (which is equivalent to saying ``with a coloured special element"). Consider the case where the coloured block has size $s$. Such a partition can be obtained by first choosing the coloured special element, then choosing $s-1$ non-special elements for this coloured block (in $\binom{n}{s-1}$ ways), and constructing from the remaining $n-(s-1)+(r-1)$ elements a $(S,r-1)$-partition (in ${n-s+1\brace k}_{S,{r-1}}$ ways). Summing over $s$ completes the argument.\\

Finally, the left-hand side of \eqref{id3} counts the $(S,r)$-partitions with a single coloured element (special or non-special). The coloured element is in a special or a non-special block. First, assume that it is in a non-special block of size $s$, so that it must be a non-special element. There are $s\binom{n}{s}$ ways to choose the $s$ elements for the block and mark one of the elements in the block. The remaining $n-s+r$ elements are partitioned into $k-1+r$ blocks in ${n-s\brace k-1}_{S,r}$ ways. Assume now that the coloured element is in a special block of size $s$. Choose a special element (in one of $r$ ways) and $s-1$ non-special elements in one of $\binom{n}{s-1}$ ways for the block; now, mark one of the elements of the block, the special or non-special element, in one of $s$ ways, and construct a $(S,r-1)$-partition of the remaining $n-(s-1)+(r-1)$ elements into $(k+r-1)$ non-empty blocks (in ${n-s+1\brace k}_{S,r-1}$ ways). Summing over $s$ completes the argument.\\

Note that it is possible to give an algebraic proof of these identities. For example, for \eqref{id2} we begin with the generating function \eqref{Sgenfunc} and write it as a product of two power series. First, we downshift the summation index from $n=k$ to $n=0$, since ${n\brace k}_{S,r} = 0$ for $n<k$. Therefore,
\begin{align*}
\sum_{n=0}^\infty { n \brace k }_{S,r} \frac{x^n}{n!} &=\frac{1}{k!}\left(\sum_{s\in S} \frac{x^{s-1}}{(s-1)!}\right)^{r-1}\left(\sum_{s \in S} \frac{x^{s}}{s!}\right)^k  \left(\sum_{s\in S} \frac{x^{s-1}}{(s-1)!}\right)  \\
&= \left(\sum_{n=0}^\infty { n \brace k }_{S,r-1} \frac{x^n}{n!} \right)\left(\sum_{\substack{n=0\\n+1\in S}}^\infty \frac{x^{n}}{n!}\right)  \\
&= \sum_{n=0}^\infty x^n \sum_{\substack{j=0\\j+1\in S}}^n \frac{1}{j!(n-j)!} {n-j\brace k}_{S,r-1}.
\end{align*}
Reindexing the summation to go over $S$ and comparing coefficients of $x^n$ completes the proof.
\end{proof}

We can obtain a slightly more complicated recurrence as follows:
\begin{theorem}\label{teo8a}
We have the recurrence
$$
{n+1\brace k}_{S,r}  = {n\brace k-1}_{S,r+1} + r\sum_{s\in S}  \binom{n}{s-2}{n-s+2\brace k}_{S,r-1} .
$$
\end{theorem}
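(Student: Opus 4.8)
The plan is to give a combinatorial proof in the same spirit as Theorem~\ref{teo1}, by distinguishing a single non-special element and splitting the partitions according to the nature of its block. Concretely, I would single out a distinguished non-special element $x$ in an arbitrary $(S,r)$-partition of $[n+1+r]$ counted by the left-hand side, and ask whether the block $\Block$ containing $x$ is a \emph{non-special} block (no special element) or a \emph{special} block (containing exactly one special element, since the special elements lie in distinct blocks). The two cases will produce the two terms on the right-hand side respectively.

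For the first case, when $\Block$ is a non-special block, I would \emph{promote} the element $x$ to the role of the $(r+1)$-th special element. Since $\Block$ already contains no special element, after the promotion $\Block$ becomes a special block of the same size $s\in S$, so the block-size condition is preserved; the number of special blocks rises from $r$ to $r+1$ and the number of non-special blocks drops from $k$ to $k-1$, while the non-special elements now number $n$. This produces an $(S,r+1)$-partition of $[n+(r+1)]$ into $(k-1)+(r+1)$ blocks. The inverse operation demotes the distinguished special element back to the non-special element $x$, and because the distinguished element is canonical there are no extra factors; hence this case is counted exactly by ${n\brace k-1}_{S,r+1}$.

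For the second case, when $\Block$ is a special block, I would reconstruct such partitions through the following choices: pick the unique special element sharing $\Block$ with $x$ (one of $r$ choices), fix the block size $s\in S$, and choose the remaining $s-2$ members of $\Block$ among the $n$ other non-special elements in $\binom{n}{s-2}$ ways. Deleting $\Block$ entirely removes one special element and $s-1$ non-special elements, leaving $r-1$ special elements, $n-s+2$ non-special elements, and still $k$ non-special blocks; these form an $(S,r-1)$-partition counted by ${n-s+2\brace k}_{S,r-1}$. Summing over $s\in S$ and accounting for the $r$ choices yields $r\sum_{s\in S}\binom{n}{s-2}{n-s+2\brace k}_{S,r-1}$, where the vanishing of $\binom{n}{s-2}$ for $s<2$ correctly discards block sizes too small to hold both a special element and $x$. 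Adding the two cases gives the claim.

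I expect the only delicate point to be the bookkeeping of special-element labels in the promotion step of the first case: one must check that sending $x$ to the single new top label $r+1$ (rather than to an arbitrary slot) makes the correspondence a genuine bijection with no multiplicity, which is precisely why no coefficient appears in front of the first term. As a cross-check, an algebraic proof follows immediately from \eqref{Sgenfunc}: writing $A(x)=\sum_{s\in S}x^{s-1}/(s-1)!$ and $B(x)=\sum_{s\in S}x^{s}/s!$, one has $B'=A$ and $A'=\sum_{s\in S}x^{s-2}/(s-2)!$, so differentiating $\tfrac{1}{k!}A^{r}B^{k}$ with respect to $x$ and reading off the coefficient of $x^{n}/n!$ reproduces both terms on the right-hand side.
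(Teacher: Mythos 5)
Your proposal is correct and follows essentially the same route as the paper: the paper's combinatorial argument also distinguishes the block containing the element $n+1$, promoting it to an $(r+1)$-th special element when its block is non-special and deleting its special block of size $s$ (chosen in $r\binom{n}{s-2}$ ways) otherwise, and the paper likewise opens with the derivative-of-\eqref{Sgenfunc} computation you sketch as a cross-check. No gaps.
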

\begin{proof}
We begin with the generating function \eqref{Sgenfunc} beginning at $n=0$, take a derivative, and compare coefficients. Therefore,
\begin{align*}
\sum_{n=0}^\infty { n+1 \brace k }_{S,r} \frac{x^n}{n!} &=\frac{1}{k!}\left(\sum_{s\in S} \frac{x^{s-1}}{(s-1)!}\right)^r\left(\sum_{s \in S} \frac{x^{s}}{s!}\right)^{k-1}  k\left(\sum_{s\in S} \frac{x^{s-1}}{(s-1)!}\right)  \\
&\medspace+ \frac{1}{k!}\left(\sum_{s\in S} \frac{x^{s-1}}{(s-1)!}\right)^{r-1}\left(\sum_{s \in S} \frac{x^{s}}{s!}\right)^{k}  r\left(\sum_{s\in S-\{1\}} \frac{x^{s-2}}{(s-2)!}\right) \\
&=  \sum_{n=0}^\infty { n \brace k-1 }_{S,r+1} \frac{x^n}{n!} +  r\sum_{n=0}^\infty { n \brace k }_{S,r-1} \frac{x^n}{n!}  \sum_{s\in S-\{1\}} \frac{x^{s-2}}{(s-2)!}.
\end{align*}
Reindexing the summation to go over $S$ and comparing coefficients of $x^n$ completes the proof.\\

We also provide a combinatorial argument. The left-hand side counts the $(S,r)$-partitions of $[n+1+r]$ into $k+r$ blocks. Consider the position of the $(n+1)$-th element. It is contained in a non-special or a special block. Suppose first that it is contained in a non-special block. Considering $(n+1)$ as a special element, we actually have a $(S,r+1)-$partition with $k-1$ non-special blocks (since the block containing $(n+1)$ is now a special block). The number of such partitions is counted by ${n \brace k-1}_{S,r+1}$. Suppose now that the $(n+1)$-th element is contained in a special block of size $s$. This block contains a special element $r$ and $s-2$ other elements, hence it can be constructed in $r\binom{n}{s-2}$ ways. From the remaining $(n-s+2)+(r-1)$ elements we can construct a $(S,r-1)$-partition into $k$ non-special blocks in ${n-s+2\brace k}_{S,r-1}$ ways.
\end{proof}

Though Mihoubi and Rahman did not consider analogs of the Bell numbers, we can easily use the previous identities to describe similar recurrences for $(S,r)$-Bell numbers. In general, given a recurrence for ${n\brace k}_{S,r}$ that trivially depends on $k$, we can formally sum over all $k$ from $0$ to $\infty$ to obtain a Bell number identity. This follows from the fact $B_{n,S,r} = \sum_{k=0}^\infty {n\brace k}_{S,r}$, with the $k>n$ terms equal to $0$. Therefore, we easily obtain the following set of identities:
\begin{theorem}
We have the following recurrences:
\begin{align*}
B_{n,S,r+1} &=\sum_{s\in S}  \binom{n}{s-1}B_{n-s+1,S,r},\\
(n+r)B_{n,S,r} &=  \sum_{s\in S}s \binom{n}{s} B_{n-s,S,{r}} + r \sum_{s\in S}s \binom{n}{s-1} B_{n-s+1,S,{r-1}},\\
B_{n+1,S,r}  &=B_{n,S,r+1} + r\sum_{s\in S}  \binom{n}{s-2}B_{n-s+2,S,r-1}.
\end{align*}
\end{theorem}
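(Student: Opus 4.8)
The plan is to derive each of the three Bell-number recurrences by summing the corresponding $(S,r)$-Stirling recurrence over all block counts $k$, exploiting the defining identity $B_{n,S,r}=\sum_{k=0}^\infty {n\brace k}_{S,r}$. Since for a fixed $n$ only finitely many terms ${n\brace k}_{S,r}$ are nonzero (those with $0\le k\le n$), every sum over $k$ is effectively finite, so interchanging the order of summation over $k$ and over $s\in S$ is automatically justified and needs no convergence argument. The one routine manipulation to track is the reindexing of shifted terms: summing ${m\brace k-1}_{S,r}$ over $k\ge 0$ gives $\sum_{j\ge -1}{m\brace j}_{S,r}=B_{m,S,r}$, because the $j=-1$ term vanishes. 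Note that \eqref{id1} is \emph{not} among the usable inputs, since the factor $k$ on its left side produces a weighted sum rather than a Bell number; this is precisely the meaning of the requirement that the recurrence depend on $k$ only trivially.

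First I would treat \eqref{id2}. Both sides carry a common factor $r$; summing over $k$ and cancelling $r$ (for $r\ge 1$) gives $B_{n,S,r}=\sum_{s\in S}\binom{n}{s-1}B_{n-s+1,S,r-1}$, and replacing $r$ by $r+1$ yields the first recurrence. Next, summing \eqref{id3} over $k$ is immediate: the term $\sum_k{n-s\brace k-1}_{S,r}$ reindexes to $B_{n-s,S,r}$ while $\sum_k{n-s+1\brace k}_{S,r-1}=B_{n-s+1,S,r-1}$, producing the second (and most involved) recurrence, namely $(n+r)B_{n,S,r}=\sum_{s\in S}s\binom{n}{s}B_{n-s,S,r}+r\sum_{s\in S}s\binom{n}{s-1}B_{n-s+1,S,r-1}$. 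Finally, summing the recurrence of Theorem \ref{teo8a} over $k$ turns its left side into $B_{n+1,S,r}$, its first right-hand term $\sum_k{n\brace k-1}_{S,r+1}$ into $B_{n,S,r+1}$, and its second into $r\sum_{s\in S}\binom{n}{s-2}B_{n-s+2,S,r-1}$, giving the third recurrence.

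I do not expect a genuine obstacle, since the work is entirely bookkeeping; the only place to be careful is ensuring each index shift is handled consistently, in particular verifying that the out-of-range Stirling numbers (negative lower index, or lower index exceeding the upper one) are exactly the terms that drop out when passing to the Bell sums. As an alternative that sidesteps even this bookkeeping, one could give direct combinatorial proofs: for the first recurrence, classify an $(S,r+1)$-partition of $[n+r+1]$ by the size $s\in S$ of the block containing the last special element, choose its $s-1$ non-special members in $\binom{n}{s-1}$ ways, and recognize the remainder as an $(S,r)$-partition of $[n-s+1+r]$; the other two recurrences admit analogous colouring arguments mirroring the proofs of Theorem \ref{teo1} and Theorem \ref{teo8a}.
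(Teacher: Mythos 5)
Your proposal is correct and follows exactly the route the paper takes: the paper derives all three identities by formally summing the recurrences \eqref{id2}, \eqref{id3}, and Theorem \ref{teo8a} over $k$, using $B_{n,S,r}=\sum_{k\ge 0}{n\brace k}_{S,r}$ and the vanishing of out-of-range terms, precisely as you describe. Your explicit remarks on the $r\mapsto r+1$ substitution, the reindexing of the shifted lower index, and the unusability of \eqref{id1} are all sound and merely make explicit what the paper leaves as ``we easily obtain.''
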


\subsection{Changing the index set}
Some of the most interesting results about these numbers occur when we shift the set $S$. For the rest of this section, let $S+\vec{a} = \{s+a|s\in S\}$, where $a \in \Z$ can also be negative.

The following result is due to Mihoubi and Rahman.
\begin{theorem}\label{teo9a} \cite[Prop. 1]{MR2017}
If $1\in S$, we have the recurrence
$$
{n\brace k}_{S,r} =\sum_{i=0}^r \sum_{j=0}^k \binom{r}{i}\binom{n}{j}{n-j\brace k-j}_{S- \{1\},r-i}.
$$
\end{theorem}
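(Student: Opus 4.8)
The plan is to prove this combinatorially by sorting each $(S,r)$-partition according to its singleton blocks. The only block size available in $S$ but \emph{not} in $S-\{1\}$ is $1$, so an $(S,r)$-partition may contain singleton blocks while an $(S-\{1\},r')$-partition never does. The two summation indices will record the two kinds of singletons: $i$ will count the singleton \emph{special} blocks and $j$ the singleton \emph{non-special} blocks. The identity will then follow by collecting all $(S,r)$-partitions according to the pair $(i,j)$.

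I would fix a member of $\Pi_{S,r}(n,k)$ and call a special (resp. non-special) element \emph{isolated} if it forms a block by itself. Let $i$ be the number of isolated special elements and $j$ the number of isolated non-special elements. Because the $r$ special elements occupy distinct blocks, specifying which of them are isolated is a choice of an $i$-subset of the special elements, hence $\binom{r}{i}$ possibilities; specifying the isolated non-special elements is a choice of a $j$-subset of the $n$ non-special elements, hence $\binom{n}{j}$ possibilities.

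Next I would analyze the residual partition. Deleting the $i+j$ singleton blocks leaves the $r-i$ surviving special elements and $n-j$ surviving non-special elements, and every surviving block now has size at least $2$, i.e. size in $S-\{1\}$; moreover the surviving special elements still lie in distinct blocks. Thus the residue is exactly an $(S-\{1\},r-i)$-partition. Its number of non-special blocks is $k-j$, since we began with $k$ and removed $j$ singletons, while the number of special blocks drops from $r$ to $r-i$. By the definition of the numbers (and since the count is unchanged under relabeling the surviving special elements to $1,\dots,r-i$), the number of such residues is ${n-j\brace k-j}_{S-\{1\},r-i}$. Multiplying the three independent factors and summing over all admissible $i\in\{0,\dots,r\}$ and $j\in\{0,\dots,k\}$ gives the right-hand side.

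The point that needs care---and the step I expect to be the main obstacle---is the verification that this construction is a genuine bijection: the pair $(i,j)$ is uniquely determined by the partition, so distinct pairs index disjoint classes whose union is all of $\Pi_{S,r}(n,k)$, and the singleton-deletion map is invertible once the isolated elements and the residual $(S-\{1\},r-i)$-partition are prescribed. As an independent algebraic check I would start from \eqref{Sgenfunc}: since $1\in S$, one can split off the singleton contributions by writing $\sum_{s\in S}x^{s-1}/(s-1)!=1+P$ and $\sum_{s\in S}x^{s}/s!=x+Q$, where $P=\sum_{s\in S-\{1\}}x^{s-1}/(s-1)!$ and $Q=\sum_{s\in S-\{1\}}x^{s}/s!$. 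Expanding $(1+P)^r$ and $(x+Q)^k$ by the binomial theorem, the factor $\binom{r}{i}P^{r-i}$ supplies the index $i$ and the factor $(x^j/j!)\bigl(Q^{k-j}/(k-j)!\bigr)$ supplies the index $j$; recognizing $P^{r-i}Q^{k-j}/(k-j)!$ as the exponential generating function of ${\,\cdot\,\brace k-j}_{S-\{1\},r-i}$ reproduces the double sum after extracting the coefficient of $x^n/n!$.
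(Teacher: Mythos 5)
Your argument is correct and is essentially the same as the paper's: the authors also classify the $(S,r)$-partitions by the number $i$ of singleton special blocks and the number $j$ of singleton non-special blocks, obtaining the factors $\binom{r}{i}$, $\binom{n}{j}$, and ${n-j\brace k-j}_{S-\{1\},r-i}$ for the residue. Your supplementary generating-function check goes beyond what the paper records but matches the standard expansion of \eqref{Sgenfunc}.
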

\begin{proof}
The left-hand side counts the $r$-partitions of $[n+r]$ into $k+r$ blocks. For the right-hand side we count these $r$-partitions according to the number of singletons. Suppose that there are $i$ special blocks of size 1 and    $j$ non-special blocks of size 1. Then there are  $\binom{r}{i}\binom{n}{j}$ ways to construct these blocks. The remaining $(n-j)+(r -i)$ elements must be  arranged in non-singleton  blocks, that is  in ${n-j\brace k-j}_{S- \{1\},r-i}$ ways.  Summing over $i$ and $j$ completes the argument.
\end{proof}
From the recurrence above we obtain the following relation for the $(S,r)$-Bell numbers:
$$B_{n,S,r}=\sum_{i=0}^r\sum_{j=0}^n\binom ri \binom nj B_{n-j,S-\{1\}, r-i}.$$

In the following theorem we generalize the combinatorial identity given in Theorem \ref{teo9a}.

\begin{theorem}\label{teo9aa}
If $u \in S$, we have the recurrence
$$
{n\brace k}_{S,r} =\sum_{i=0}^r \sum_{j=0}^k \binom{r}{i}\frac{n!}{(u-1)!^i u!^j j! (n-(u-1)i-uj)!}{n-(u-1)i-uj \brace k-j}_{S- \{u\},r-i}.
$$
\end{theorem}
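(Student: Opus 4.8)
The plan is to adapt the combinatorial argument behind Theorem~\ref{teo9a}, classifying the elements of $\Pi_{S,r}(n,k)$ according to their blocks of size exactly $u$ rather than size $1$. First I would fix two parameters: the number $i$ of \emph{special} blocks of size $u$ (each consisting of one of the $r$ special elements together with $u-1$ non-special elements) and the number $j$ of \emph{non-special} blocks of size $u$. Since this classification accounts for every block of size $u$, deleting all $i+j$ of them leaves a partition in which no block has size $u$; the surviving configuration is therefore an $(S-\{u\},r-i)$-partition of the remaining $(n-(u-1)i-uj)+(r-i)$ elements into $(k-j)+(r-i)$ blocks, and there are exactly ${n-(u-1)i-uj \brace k-j}_{S-\{u\},r-i}$ of them. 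Summing over the admissible $i$ and $j$ will reconstruct ${n\brace k}_{S,r}$.

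The heart of the proof is the constructive count of the deleted size-$u$ blocks, which I expect to be the main obstacle because it requires carefully distinguishing labeled from unlabeled blocks. I would first choose, in $\binom{r}{i}$ ways, which special elements occupy size-$u$ blocks; these $i$ blocks are \emph{labeled} by their distinct special elements. It then remains to carve out of the $n$ non-special elements $i$ labeled sets of size $u-1$ (the non-special parts of the special blocks) and $j$ \emph{unlabeled} sets of size $u$ (the non-special blocks), leaving $n-(u-1)i-uj$ elements untouched. The number of such arrangements is the multinomial-type quantity
$$\frac{n!}{((u-1)!)^{i}\,(u!)^{j}\,j!\,(n-(u-1)i-uj)!},$$
where the factor $((u-1)!)^{i}$ together with the absence of an $i!$ reflects that the $i$ special blocks are distinguishable, while the factor $(u!)^{j}j!$ reflects that the $j$ non-special blocks are mutually indistinguishable. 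Multiplying this by $\binom{r}{i}$ and by the count of the residual partition gives the summand of the claimed identity.

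As a consistency check I would specialize to $u=1$: the coefficient collapses to $\tfrac{n!}{j!\,(n-j)!}=\binom{n}{j}$ and the residual index set becomes $S-\{1\}$, recovering Theorem~\ref{teo9a} exactly. If a purely algebraic verification is desired, the same decomposition can be read off the generating function \eqref{Sgenfunc} by splitting each inner sum as $\tfrac{x^{u-1}}{(u-1)!}+\sum_{s\in S-\{u\}}\tfrac{x^{s-1}}{(s-1)!}$ and $\tfrac{x^{u}}{u!}+\sum_{s\in S-\{u\}}\tfrac{x^{s}}{s!}$, expanding the $r$th and $k$th powers by the binomial theorem, using $\tfrac{1}{k!}\binom{k}{j}=\tfrac{1}{j!\,(k-j)!}$, and comparing coefficients of $x^{n}/n!$; this route sidesteps the labeled-versus-unlabeled bookkeeping that is the only delicate point in the combinatorial argument.
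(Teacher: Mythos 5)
Your proposal is correct and follows essentially the same argument as the paper: classify partitions by the numbers $i$ and $j$ of special and non-special blocks of size $u$, count the ways to build those blocks, and recognize the remainder as an $(S-\{u\},r-i)$-partition. The only cosmetic difference is that you write the block count directly as a multinomial with labeled special parts and unlabeled non-special parts, whereas the paper first chooses the $(u-1)i+uj$ elements, partitions them with both symmetry factors $i!$ and $j!$ in the denominator, and then reinserts an $i!$ for attaching the special elements; the two computations agree.
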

\begin{proof}
To show this identity, we count $r$-partitions according to their number of blocks of size $u$. Suppose that there are $i$ special blocks of size exactly $u$ and $j$ non-special blocks of size $u$. We first construct the blocks of size $u$. For these blocks we need $i$ special elements from the $r$ distinguished elements and $(u-1)i+uj$ non-special elements from the remaining $n$. We choose these elements in $\binom{r}{i}$ ways, resp. in $\binom{n}{(u-1)i+uj}$ ways. We construct the blocks from the chosen elements in $\frac{((u-1)i+uj)!}{(u-1)!^ii!u!^jj!}$ ways. There are $i!$ ways to insert our $i$ special elements into the special blocks.
Hence, we have
$$
\binom{r}{i}\binom{n}{(u-1)i+uj}\frac{((u-1)i+uj)!}{(u-1)!^ii!u!^jj!}i!=\frac{n!}{(u-1)!^iu!^jj!(n-(u-1)i-uj)!}
$$
possibilities for constructing the blocks of size $u$.
The remaining $(n-(u-1)i-uj)+(r-i)$ elements must be arranged in $(k+r)-(i+j)$ blocks with the restriction that the size of the blocks are contained in the set $S-\{u\}$ and that the remaining $r-i$ special elements are in distinct blocks. Hence, we have ${n-(u-1)i-uj \brace k-j}_{S-\{u\},r-i}$ possible constructions.  Summing over $i$ and $j$ completes the argument.
\end{proof}

We can now obtain a reduction formula for $r$ which also reduces the set $S$.
\begin{theorem}
Let $\ell \in \Z$ with $0\leq \ell \leq r$. Then we have the recurrence
$$
{n\brace k}_{S,r} = \ell!\sum_{j=0}^n \binom{n}{j} {j\brace k}_{S,r-\ell} {n-j\brace \ell}_{S-\vec{1}}=\sum_{j=0}^n \binom{n}{j} {j\brace k}_{S,r-\ell} {n-j\brace 0}_{S,\ell}.
$$
\end{theorem}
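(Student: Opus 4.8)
The plan is to prove the second displayed equality by a direct combinatorial decomposition, in the spirit of Theorems~\ref{teo1}--\ref{teo9aa}, and then to recover the first form from the single-variable identity
\[
{m\brace 0}_{S,\ell}=\ell!\,{m\brace \ell}_{S-\vec 1},\qquad m\geq 0.
\]
Recall that ${n\brace k}_{S,r}$ counts the $(S,r)$-partitions of $[n+r]$ into $k+r$ blocks: the $r$ special elements $1,\dots,r$ occupy $r$ distinct \emph{special blocks}, there are $k$ further \emph{non-special blocks}, and every block has size in $S$. First I would single out the fixed group of special elements $1,\dots,\ell$ (no choice is made, which is why no factor $\binom r\ell$ occurs) and cut each such partition into two independent pieces: the $\ell$ special blocks containing $1,\dots,\ell$, and the remaining structure formed by the $r-\ell$ special blocks containing $\ell+1,\dots,r$ together with the $k$ non-special blocks.

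Next I would count this split according to how the $n$ non-special elements are distributed. Let $j$ be the number of non-special elements sent to the second piece, so the remaining $n-j$ go to the first piece; selecting them contributes $\binom nj$. The second piece is exactly an $(S,r-\ell)$-partition of its $j$ non-special and $r-\ell$ special elements into $k+(r-\ell)$ blocks, hence is enumerated by ${j\brace k}_{S,r-\ell}$; the first piece is an $(S,\ell)$-partition of its $n-j$ non-special and $\ell$ special elements into $\ell$ blocks with no non-special block, hence is enumerated by ${n-j\brace 0}_{S,\ell}$. Summing over $j$ yields
\[
{n\brace k}_{S,r}=\sum_{j=0}^n\binom nj{j\brace k}_{S,r-\ell}{n-j\brace 0}_{S,\ell},
\]
which is the second form of the theorem.

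To pass to the first form I would establish the sub-identity ${m\brace 0}_{S,\ell}=\ell!\,{m\brace \ell}_{S-\vec 1}$. Combinatorially, deleting the $\ell$ distinguishable special elements from their blocks turns an object counted by ${m\brace 0}_{S,\ell}$ into an unordered partition of the remaining $m$ elements into $\ell$ blocks whose sizes, each lowered by one, now range over $S-\vec 1$; reattaching the labels $1,\dots,\ell$ to the $\ell$ blocks restores the factor $\ell!$. I expect the only delicate point to be the bookkeeping when $1\in S$, where a special block may be a singleton and its reduced block empty, so that the naive labeling overcounts. This edge case is settled cleanly by comparing exponential generating functions: by \eqref{Sgenfunc}, both ${m\brace 0}_{S,\ell}$ and $\ell!\,{m\brace \ell}_{S-\vec 1}$ have exponential generating function $\bigl(\sum_{s\in S}x^{s-1}/(s-1)!\bigr)^\ell$. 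In fact the same computation proves the whole theorem in one stroke: factoring $\bigl(\sum_{s\in S}x^{s-1}/(s-1)!\bigr)^r$ in \eqref{Sgenfunc} as the product of its $(r-\ell)$th and $\ell$th powers and reading off the resulting binomial convolution reproduces both displayed identities simultaneously.
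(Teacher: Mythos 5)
Your proof is correct and takes essentially the same route as the paper's, which likewise gives both an algebraic argument (factoring the $r$-th power in \eqref{Sgenfunc} into its $(r-\ell)$-th and $\ell$-th powers and reading off the binomial convolution) and a combinatorial one (splitting off the $\ell$ special blocks and converting between the two forms via ${m\brace 0}_{S,\ell}=\ell!\,{m\brace \ell}_{S-\vec{1}}$). Your explicit attention to the $1\in S$ edge case, where ${m\brace\ell}_{S-\vec{1}}$ must be read off the generating function, is a sensible precaution that the paper leaves implicit.
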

\begin{proof}
We begin, as always, with the generating function \eqref{Sgenfunc}. Then
\begin{align*}
\sum_{n=0}^\infty { n \brace k }_{S,r} \frac{x^n}{n!}&=\frac{1}{k!}\left(\sum_{s\in S} \frac{x^{s-1}}{(s-1)!}\right)^r\left(\sum_{s \in S} \frac{x^{s}}{s!}\right)^k \\
&=\frac{1}{k!}\left(\sum_{s\in S} \frac{x^{s-1}}{(s-1)!}\right)^{r-\ell}\left(\sum_{s \in S} \frac{x^{s}}{s!}\right)^k \left(\sum_{s\in S} \frac{x^{s-1}}{(s-1)!}\right)^{\ell}\\
&=\sum_{n=0}^\infty { n \brace k }_{S,r-\ell} \frac{x^n}{n!}  \ell ! \sum_{n=0}^\infty { n \brace \ell }_{S-\vec{1}} \frac{x^n}{n!}.
\end{align*}
Taking a product and comparing coefficients completes the proof of the first identity. The proof of the second identity is similar.
\\
We also provide a combinatorial proof. Let $j$ be the number of coloured non-special elements. We count the $(S,r)$-partitions according to the special blocks that do not contain any coloured elements. Let $\ell$ denote the number of such special blocks. Choose first the $j$  elements that will be coloured in $\binom{n}{j}$ ways. There are ${j\brace k}_{S,r-\ell}$ ways to construct $(k+r-\ell)$ blocks, such that a special block contains coloured elements, and ${n-j\brace 0}_{S,\ell}$ ways to construct the $\ell$ special blocks without coloured non-special elements. This proves the second identity. We can also construct the $\ell$ special blocks without coloured non-special elements such that we partition the $n-j$ elements into $\ell$ blocks such that each block has size in $S-\vec{1}$ in ${n-j \brace \ell}_{S-\vec{1}}$ ways, and add one of the $\ell$ special element to each block in $\ell!$ ways. This proves the first identity.
\end{proof}

This result also suggests a more in-depth combinatorial study of ${n\brace 0}_{S,r}$ as a special limit case.
\begin{proposition}
If $1 \notin S$, then
$${n\brace 0}_{S,r}=r!{n\brace r}_{S-\vec{1}}.$$
If $1 \in S$, then
$$
{n\brace 0}_{S,r}=\sum_{i=0}^r(r)_i{n\brace i}_{S-\{1\}},
$$
where $(r)_i:=r(r-1)(r-2)\cdots(r-i+1)$ is a falling factorial.
\end{proposition}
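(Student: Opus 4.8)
The plan is to read the quantity ${n\brace 0}_{S,r}$ directly from its definition. A member of $\Pi_{S,r}(n,0)$ is a partition of $[n+r]$ into exactly $r$ blocks, all of whose sizes lie in $S$, with the $r$ special elements in distinct blocks; since there are $r$ blocks and $r$ special elements, \emph{each} block contains exactly one special element, and the $n$ non-special elements are distributed among these $r$ special blocks. This reframing is what makes both identities transparent, and I would state it as the opening observation of the proof.

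For the first identity, assume $1\notin S$. Then every block has size at least $2$, so deleting the unique special element from each block leaves $r$ non-empty blocks of the $n$ non-special elements, a block of original size $s\in S$ becoming a block of size $s-1\in S-\vec{1}$. I would argue that this deletion is a bijection onto the data consisting of a partition of $[n]$ into $r$ blocks with sizes in $S-\vec{1}$ (counted by ${n\brace r}_{S-\vec{1}}$) together with a bijective assignment of the $r$ special elements to the $r$ resulting, a priori unordered, blocks, which contributes the factor $r!$. The one point to verify is that no block becomes empty, and this is exactly what $1\notin S$ guarantees; this is the clean case.

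For the second identity, assume $1\in S$, so that singleton blocks consisting of a lone special element are now permitted, which is precisely what breaks the previous bijection. I would therefore stratify the partitions in $\Pi_{S,r}(n,0)$ by the number $i$ of special blocks of size $\geq 2$, i.e.\ those that genuinely contain a non-special element; the remaining $r-i$ special elements are then forced into singleton blocks, which is legitimate exactly because $1\in S$. Deleting the special elements from the $i$ large blocks partitions all $n$ non-special elements into $i$ non-empty blocks whose sizes lie in the downshifted set $S-\vec{1}$ (the vacuous size-zero value, arising from $1\in S$, plays no role since blocks are non-empty), counted by ${n\brace i}_{S-\vec{1}}$. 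Restoring the structure requires choosing which $i$ special elements are active and assigning them bijectively to these $i$ blocks, in $\binom{r}{i}i!=(r)_i$ ways, and summing over $0\leq i\leq r$ yields the claimed falling-factorial sum.

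Finally I would record a purely deductive confirmation that avoids repeating the bookkeeping: applying Theorem \ref{teo9a} with $k=0$ collapses the inner sum to $j=0$ and gives ${n\brace 0}_{S,r}=\sum_{i=0}^r\binom{r}{i}{n\brace 0}_{S-\{1\},r-i}$; since $1\notin S-\{1\}$, the first identity converts each summand into $(r-i)!\,{n\brace r-i}_{(S-\{1\})-\vec{1}}$, and the reindexing $\binom{r}{i}(r-i)!=(r)_{r-i}$ reproduces the sum (the non-special index set here is the same $S-\vec{1}$ once the empty part is discarded). The main obstacle in either route is bookkeeping rather than ideas: I must pin down the exact index set of the non-special parts after removing special elements, namely a downward shift by one, and confirm that the first identity is genuinely the degenerate $1\notin S$ case in which only the top term $i=r$ survives, so that the second stratification is valid precisely when $1\in S$ and the two formulas are truly distinct.
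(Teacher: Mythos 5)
Your proof is correct and follows essentially the same route as the paper's: observe that ${n\brace 0}_{S,r}$ counts partitions into $r$ blocks each containing exactly one special element, delete the special elements to get an $r!$-fold correspondence with partitions of $[n]$ into $r$ blocks of sizes in $S-\vec{1}$ when $1\notin S$, and stratify by the number of non-singleton special blocks when $1\in S$. One point where you diverge, to your credit: after deleting the special elements from the $i$ non-singleton blocks you correctly identify the index set of the resulting partition of $[n]$ as $S-\vec{1}$ (with the harmless value $0$ discarded), i.e.\ $(S\setminus\{1\})-\vec{1}$, whereas the statement and the paper's proof write ${n\brace i}_{S-\{1\}}$, which under the paper's own conventions denotes the set difference and gives the wrong count (e.g.\ $S=\{1,2\}$, $r=n=1$ yields $1$ on the left but $0$ from the set-difference reading); your version is the correct one. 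Your closing derivation from Theorem \ref{teo9a} with $k=0$ is a valid consistency check and adds nothing problematic.
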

\begin{proof}
Note that ${n\brace 0}_{S,r}$ counts partitions of $n+r$ into $r$ blocks, each block containing a special element. Deleting the special elements we obtain an $(S-\vec{1},r)$-partition into $r$ non-empty blocks. Otherwise, there are $r!$ ways to augment each block with an element of $[r]$.

Assume now that $S$ contains $1$.  In this case there are some, say $i$, singleton blocks containing only a special element. Choose the special elements for the singleton blocks in ${r \choose i}$ ways, and apply the same argument as before for the remaining construction to obtain $(r-i)!{n\brace r-i}_{S-\{1\}}$.
\end{proof}

\section{$(S,r)$-restricted Permutations}
The goal of this section is to study an analogous restriction for the case of permutations. The \emph{$(S,r)$-Stirling numbers of the first kind},  denoted by   ${n \brack k}_{S,r}$, enumerate the number of permutations of a set with $n+r$ elements into $k+r$ cycles such that the first $r$ elements are in different cycles and all cycle sizes  are  contained in the set $S\subseteq \Z^{+}$.  The permutations where the first $r$ elements are in distinct cycles are  called \emph{$r$-permutations}.  The elements $1, 2, \dots , r$ will also be called \emph{special elements}, and the cycles with special elements will be called \emph{special cycles}.

The exponential generating function of the sequence  ${n\brack k}_{S,r}$ is
\begin{equation}\label{Sgenfunc2}
\sum_{n=k}^\infty { n \brack k }_{S,r} \frac{x^n}{n!}=\frac{1}{k!}\left(\sum_{s\in S} x^{s-1}\right)^r\left(\sum_{s \in S} \frac{x^{s}}{s}\right)^k.
\end{equation}
We recover the $r$-Stirling numbers of the first kind by setting $S=\Z^+$. If we take $S=\{1, 2,  \dots, m\}$ we obtain the restricted $r$-Stirling numbers of the first kind \cite{KJL2}. Similarly, if we take $S=\{m, m+1, \dots\}$, we recover the associated $r$-Stirling  numbers of the first kind \cite{KJL2}.

The $(S,r)$-Stirling numbers of the first kind  ${n\brack k}_{S,r}$ coincide with the partial $r$-Bell polynomials  $B^{(r)}_{n+r,k+r}(a_\ell,b_\ell)$ \cite{MR2017} with
$$a_\ell=b_\ell = \begin{cases}
(\ell-1)!,& \thinspace \ell\in S, \\
0,&\thinspace 0\in S;
\end{cases}$$

Therefore we can use Proposition 3 in \cite{MR2017} to deduce some simple recurrences, for which we will provide combinatorial proofs.

\begin{theorem}\cite[Prop. 3]{MR2017}
We have the following recurrences:
\begin{align}
k{n\brack k}_{S,r} &=\sum_{s\in S} (s-1)!\binom{n}{s}{n-s\brack k-1}_{S,r} \label{id1s1}  \\
r{n\brack k}_{S,r} &=\sum_{s\in S} r(s-1)!\binom{n}{s-1} {n-s+1\brack k}_{S,{r-1}} \label{id2s1}
\end{align}
and
\begin{align}
(n+r){n\brack k}_{S,r} =  \sum_{s\in S}s! \binom{n}{s} {n-s\brack k-1}_{S,{r}} + r \sum_{s\in S}s! \binom{n}{s-1} {n-s+1\brack k}_{S,{r-1}}.  \label{id3s1}
\end{align}
\end{theorem}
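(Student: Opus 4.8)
The plan is to mirror the combinatorial proofs of Theorem~\ref{teo1} for the second-kind numbers, inserting at each stage the one extra factor that counts how many distinct cycles can be built on a prescribed set of elements. The single new ingredient is the standard fact that $s$ labelled elements can be arranged into a cycle in exactly $(s-1)!$ ways; every factorial appearing on the right-hand sides of \eqref{id1s1}--\eqref{id3s1} will be traced back to this count.

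For \eqref{id1s1} I would read the left side $k{n \brack k}_{S,r}$ as the number of $(S,r)$-permutations in which one of the $k$ non-special cycles has been marked. To rebuild such an object I first fix the size $s\in S$ of the marked cycle, choose its $s$ non-special elements in $\binom{n}{s}$ ways, and close them into a cycle in $(s-1)!$ ways; the remaining $n-s$ non-special elements together with the $r$ special elements form an arbitrary $(S,r)$-permutation with $k-1$ non-special cycles, counted by ${n-s \brack k-1}_{S,r}$. Summing over $s$ produces the weight $(s-1)!\binom{n}{s}$ and the claimed identity.

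For \eqref{id2s1} the left side $r{n \brack k}_{S,r}$ counts $(S,r)$-permutations with one special cycle marked, equivalently with one of the $r$ special elements distinguished, which accounts for the explicit factor $r$ on the right. I would then fix the size $s$ of the marked special cycle, choose its $s-1$ non-special companions in $\binom{n}{s-1}$ ways, and arrange the cycle in $(s-1)!$ ways; deleting this cycle leaves an $(S,r-1)$-permutation counted by ${n-s+1 \brack k}_{S,r-1}$. Identity \eqref{id3s1} is the superposition of the two preceding cases for a single marked element (special or non-special): in the non-special case the marking contributes an extra factor $s$, giving $s\cdot(s-1)!\binom{n}{s}=s!\binom{n}{s}$, and in the special case the same marking turns $(s-1)!$ into $s!$, producing the term $r\,s!\binom{n}{s-1}{n-s+1 \brack k}_{S,r-1}$.

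I expect the only genuine care to lie in the special-cycle bookkeeping: one must verify that anchoring the distinguished special element at a reference position leaves precisely $(s-1)!$ orderings of the remaining $s-1$ elements, which coincides with the number of unrooted cycles on $s$ elements, so that forming the cycle neither double-counts a rotation nor conflates it with the choice of which special element is distinguished; one must also confirm that the residual configuration is a legitimate $(S,r-1)$-permutation on a relabelled special set, which is justified by the symmetry of the special elements. As an independent check I would also give the generating-function derivation from \eqref{Sgenfunc2}, factoring one copy of $\left(\sum_{s\in S} x^{s-1}\right)$ out of the $r$-th power for \eqref{id2s1} and differentiating for the $(n+r)$ identity exactly as in the second-kind case; comparing coefficients of $x^n$ then reproduces the factorial weights.
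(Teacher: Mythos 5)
Your proposal is correct and matches the paper's proof: the paper writes out exactly your argument for \eqref{id1s1} (mark a non-special cycle of size $s$, build it in $(s-1)!\binom{n}{s}$ ways) and then notes that \eqref{id2s1} and \eqref{id3s1} follow the corresponding cases of Theorem~\ref{teo1} with the extra cyclic-arrangement factor, which is precisely what you do. Your supplementary generating-function check from \eqref{Sgenfunc2} also parallels the algebraic verification the paper sketches for the second-kind analogues.
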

\begin{proof}
The left-hand side of \eqref{id1s1} counts the total number of $r$-permutations  with a coloured non-special cycle.  If the coloured non-special cycle has size $s$, then there are $(s-1)!\binom{n}{s}$ ways to construct this cycle, and  the  remaining $n-s+r$ elements are arranged in ${n-s\brack k-1}_{S,r}$ ways. Summing over $s$ completes the argument.

The proofs of the remaining identities follow  a similar argument to that used in Theorem \ref{teo1}.
\end{proof}

The proofs of the following theorems  follow those of Theorems   \ref{teo8a} and \ref{teo9a}.
\begin{theorem}\label{teo8b}
We have the recurrence
$$
{n+1\brack k}_{S,r}  = {n\brack k-1}_{S,r+1} + r\sum_{s\in S} (s-1)! \binom{n}{s-2}{n-s+2\brack k}_{S,r-1} .
$$
\end{theorem}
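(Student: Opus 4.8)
The plan is to give two proofs, mirroring the two arguments used for Theorem~\ref{teo8a}: a generating function computation and a direct combinatorial count. Throughout, abbreviate $P(x) = \sum_{s\in S} x^{s-1}$ and $Q(x) = \sum_{s\in S} x^s/s$, so that \eqref{Sgenfunc2} reads $\sum_{n\ge 0}{n\brack k}_{S,r}x^n/n! = \frac{1}{k!}P(x)^r Q(x)^k$, where the sum may start at $n=0$ since ${n\brack k}_{S,r}=0$ for $n<k$.

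For the generating function proof, I would differentiate this identity with respect to $x$. The left-hand side becomes $\sum_{n\ge0}{n+1\brack k}_{S,r}x^n/n!$, while the product rule gives $\frac{1}{k!}\bigl(rP^{r-1}P'Q^k + kP^rQ^{k-1}Q'\bigr)$. The key structural observation is that $Q'(x) = \sum_{s\in S}x^{s-1} = P(x)$, so the second term collapses to $\frac{1}{(k-1)!}P^{r+1}Q^{k-1}$, which is exactly the generating function of ${n\brack k-1}_{S,r+1}$; this produces the first term on the right-hand side. The remaining term $\frac{r}{k!}P^{r-1}P'Q^k$ factors as $r\bigl(\frac{1}{k!}P^{r-1}Q^k\bigr)P'$, i.e. $r$ times the generating function of ${n\brack k}_{S,r-1}$ multiplied by $P'(x) = \sum_{s\in S}(s-1)x^{s-2}$ (the $s=1$ term vanishing). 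Extracting the coefficient of $x^n/n!$ from this product and using the identity $(s-1)!\binom{n}{s-2} = (s-1)\,n!/(n-s+2)!$ yields precisely $r\sum_{s\in S}(s-1)!\binom{n}{s-2}{n-s+2\brack k}_{S,r-1}$, completing the algebraic proof.

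For the combinatorial proof, I would condition on the cycle containing the $(n+1)$-th element, exactly as in the proof of Theorem~\ref{teo8a}. In the first case this element lies in a non-special cycle; promoting it to a special element turns its cycle into a special cycle, so one obtains an $(S,r+1)$-permutation with $k-1$ non-special cycles, enumerated by ${n\brack k-1}_{S,r+1}$ (after the harmless relabeling that makes the distinguished elements $1,\dots,r+1$). In the second case the element lies in a special cycle of size $s$; this cycle is assembled by choosing its unique special element ($r$ ways), choosing the $s-2$ remaining non-special members ($\binom{n}{s-2}$ ways), and cyclically ordering the $s$ elements ($(s-1)!$ ways), after which the leftover $(n-s+2)+(r-1)$ elements form an $(S,r-1)$-permutation into $k$ non-special cycles in ${n-s+2\brack k}_{S,r-1}$ ways. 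Summing the second case over $s\in S$ and adding the first case gives the claimed recurrence.

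The routine parts are the product rule and the coefficient extraction. The main point requiring care---and the essential difference from the set-partition version in Theorem~\ref{teo8a}---is the appearance of the cyclic factor $(s-1)!$: in the combinatorial proof it is the number of distinct cycles on $s$ fixed elements, while in the algebraic proof it emerges from matching $(s-1)\,n!/(n-s+2)!$ against $(s-1)!\binom{n}{s-2}$. A secondary subtlety is the bijective relabeling in the first combinatorial case, where one must check that the promoted element, lying in a cycle disjoint from those of $1,\dots,r$, indeed satisfies the distinct-cycle condition defining an $(S,r+1)$-permutation.
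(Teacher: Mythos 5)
Your proposal is correct and follows exactly the route the paper intends: the paper gives no separate proof of Theorem~\ref{teo8b}, stating only that it follows the proofs of Theorems~\ref{teo8a} and \ref{teo9a}, and both your differentiation of \eqref{Sgenfunc2} (using $Q'=P$ and matching $(s-1)\,n!/(n-s+2)!$ with $(s-1)!\binom{n}{s-2}$) and your case analysis on the cycle of the $(n+1)$-th element, with the cyclic factor $(s-1)!$, are the expected adaptations. No gaps.
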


\begin{theorem}\label{teo9b}\cite[Prop. 1]{MR2017}
If $1\in S$, we have the recurrence
$$
{n\brack k}_{S,r} =\sum_{i=0}^r \sum_{j=0}^k \binom{r}{i}\binom{n}{j}{n-j\brack k-j}_{S- \{1\},r-i}.
$$
\end{theorem}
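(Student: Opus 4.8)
The plan is to replicate the combinatorial argument used for Theorem \ref{teo9a}, translating it from the partition (block) setting to the permutation (cycle) setting. The left-hand side ${n\brack k}_{S,r}$ counts the $r$-permutations of $[n+r]$ into $k+r$ cycles with all cycle lengths in $S$; since $1 \in S$, fixed points (size-one cycles) are permitted. I would classify these permutations according to how many of their cycles are fixed points, separating the fixed points that are special elements from those that are non-special.

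Concretely, suppose exactly $i$ of the $r$ special elements and exactly $j$ of the $n$ non-special elements are fixed points. First I would choose which special and non-special elements these are, in $\binom{r}{i}\binom{n}{j}$ ways. The crucial bookkeeping observation is that a singleton cycle carries no extra cyclic arrangement factor, since there is exactly $(1-1)! = 1$ way to form a $1$-cycle; this is precisely why, unlike in identities such as \eqref{id1s1}, no $(s-1)!$ weights appear, and the first-kind formula matches the second-kind formula of Theorem \ref{teo9a} verbatim.

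The remaining $n-j$ non-special and $r-i$ special elements must then be arranged into cycles of size at least two, that is, with cycle lengths drawn from $S-\{1\}$, while keeping the surviving $r-i$ special elements in distinct cycles. Since the total number of cycles drops from $k+r$ to $(k-j)+(r-i)$, these arrangements are counted by ${n-j\brack k-j}_{S-\{1\},r-i}$. Summing over all admissible $i$ and $j$ yields the claimed identity.

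I do not anticipate a genuine obstacle here, as the argument is a direct transcription of the proof of Theorem \ref{teo9a}; the only point requiring care is the cycle-count bookkeeping, namely verifying that removing $i+j$ singleton cycles reduces a $(k+r)$-cycle structure on $S$ to a $\big((k-j)+(r-i)\big)$-cycle structure on $S-\{1\}$, with the special/non-special distinction preserved. An alternative algebraic route, mirroring the generating-function proofs earlier in the section, would split the factors $\sum_{s\in S} x^{s-1} = 1 + \sum_{s \in S-\{1\}} x^{s-1}$ and $\sum_{s\in S} x^{s}/s = x + \sum_{s \in S-\{1\}} x^{s}/s$ in \eqref{Sgenfunc2}, expand each by the binomial theorem, and match coefficients of $x^n/n!$; but the combinatorial proof is cleaner and is the one I would present.
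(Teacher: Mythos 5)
Your proof is correct and matches the paper's intent: the paper simply states that Theorem \ref{teo9b} follows the argument of Theorem \ref{teo9a}, which is exactly the singleton-counting decomposition you carry out, and your observation that the $(1-1)!=1$ factor makes the first-kind formula coincide with the second-kind one is the right bookkeeping point. No issues.
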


\begin{theorem}
If $u \in S$, we have the recurrence
$$
{n\brack k}_{S,r} =\sum_{i=0}^r \sum_{j=0}^k \binom{r}{i}\frac{n!}{u^j j! (n-(u-1)i-uj)!}{n-(u-1)i-uj \brack k-j}_{S- \{u\},r-i}.
$$
\end{theorem}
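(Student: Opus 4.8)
The plan is to give a direct combinatorial proof running exactly parallel to that of Theorem~\ref{teo9aa}, with the block-forming factors replaced throughout by cycle-forming factors. I would count the $r$-permutations enumerated by ${n\brack k}_{S,r}$ according to their number of cycles of size exactly $u$, separating these into special cycles (those carrying one of the $r$ distinguished elements) and non-special cycles. Writing $i$ for the number of special cycles of size $u$ and $j$ for the number of non-special cycles of size $u$, summing over all admissible $i$ and $j$ should recover the stated double sum, with the leftover elements accounted for by an $(S-\{u\},\,r-i)$-permutation.

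For the special cycles I would first choose the $i$ distinguished elements they contain in $\binom{r}{i}$ ways and select $(u-1)i$ non-special elements to accompany them. Since each special cycle is automatically labelled by its unique special element, distributing $u-1$ non-special elements to each and then cyclically ordering every resulting block of $u$ elements collapses cleanly: the $((u-1)!)^{i}$ cyclic orderings cancel the $((u-1)!)^{i}$ appearing in the multinomial distribution, so that $\binom{n}{(u-1)i}\,((u-1)i)! = n!/(n-(u-1)i)!$ counts the choice and construction of all special cycles together. For the $j$ non-special cycles, which are indistinguishable among themselves, I would use the standard count $\tfrac{(uj)!}{u^{j}j!}$ for splitting $uj$ labelled elements into $j$ unordered $u$-cycles; combined with the binomial selecting those $uj$ elements from the $n-(u-1)i$ remaining, this gives $\binom{n-(u-1)i}{uj}\tfrac{(uj)!}{u^{j}j!}=\tfrac{(n-(u-1)i)!}{u^{j}j!\,(n-(u-1)i-uj)!}$. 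The remaining $(n-(u-1)i-uj)+(r-i)$ elements are then arranged into $(k-j)+(r-i)$ cycles with all sizes in $S-\{u\}$ and the surviving $r-i$ special elements in distinct cycles, which is by definition ${n-(u-1)i-uj \brack k-j}_{S-\{u\},r-i}$.

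The one computational step worth carrying out carefully is the telescoping of these factors: multiplying the special-cycle count $n!/(n-(u-1)i)!$ by the non-special-cycle count cancels the $(n-(u-1)i)!$ and leaves precisely $\tfrac{n!}{u^{j}j!\,(n-(u-1)i-uj)!}$, which after the factor $\binom{r}{i}$ is exactly the coefficient in the statement. I expect the main subtlety to be keeping the cycle factors straight relative to the second-kind case of Theorem~\ref{teo9aa}: there the denominators carry $(u-1)!^{i}\,u!^{j}$, whereas here the $(u-1)!$ per cycle cancels the $(u-1)!^{i}$ of the special-cycle distribution entirely and reduces $u!^{j}$ to $u^{j}$ for the non-special cycles. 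Once one remembers that special cycles are distinguishable (by their special element) while non-special cycles are not, the remainder is routine bookkeeping.
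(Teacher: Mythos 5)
Your proposal is correct and follows essentially the same route as the paper: both count $r$-permutations by the numbers $i$ and $j$ of special and non-special cycles of length exactly $u$, arrive at the coefficient $\binom{r}{i}\frac{n!}{u^j j!\,(n-(u-1)i-uj)!}$, and leave the rest as an $(S-\{u\},r-i)$-permutation. The only difference is cosmetic bookkeeping — you select the special-cycle and non-special-cycle elements in two stages and telescope the factorials, whereas the paper selects all $(u-1)i+uj$ elements at once and corrects for the $u^j j!$ overcount — and your cancellation of the $((u-1)!)^i$ and the reduction of $u!^j$ to $u^j$ are both right.
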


\begin{proof}
The proof of the theorem follows the proof of Theorem \ref{teo9aa}. Assume that there are $j$ non-special and $i$ special cycles of length exactly $u$ in the permutation.
First we choose the necessary $i$ special element in $\binom{r}{i}$ ways and $i(u-1)+ju$ non-special elements in $\binom{n}{(u-1)i+uj}$, which exhausts the cycles of length $u$. For a permutation of $[i(u-1)+ju]$ , we associate these $j+i$ cycles
as follows: take $u$ elements as a cycle, $j$ times, then take $u-1$ elements $i$ times. For each $u-1$ elements insert one of the chosen special elements as a starting element. The insertion of the special elements can be done in $i!$ ways. However, this double counts some permutations; if we permute the non-special cycles, as well as the order of the special cycles we obtain the same associated permutation. Furthermore, for a non-special cycle we could choose any $u$ element to start the cycle. Hence, we have
$$
\binom{r}{i}\binom{n}{(u-1)i+uj}\frac{((u-1)i+uj)!}{u^jj!}=\binom{r}{i}\frac{n!}{u^j j! (n-(u-1)i-uj)!}
$$
total ways to construct the relevant cycles. The remaining $n-i(u-1)-ju$ elements form a $(S-\{u\},r-i)$-permutation.
\end{proof}

\section{$(S,r)$-restricted Stirling Matrices}

As a next step we use the algebraic theory of Pascal and Stirling matrices, and the theory of Riordan groups \cite{Riordan} respectively, for the study of our sequences. We introduce

the \emph{$(S,r)$-Stirling matrix of the second kind} and  \emph{$(S,r)$-Stirling matrix of the first kind}, as the infinite matrices defined by
$$
\M_{S,r}:=\left[{n \brace k}_{S,r}\right]_{n, k \geq 0} \quad \mbox{and}\quad \La_{S,r}:=\left[{n \brack k}_{S,r}\right]_{n, k \geq 0} .
$$
An infinite lower triangular matrix $L=\left[d_{n,k}\right]_{n,k\in \N}$ is called an \emph{exponential  Riordan array}, (cf. \cite{Barry}), if its column $k$ has generating function $g(x)\left(f(x)\right)^k/k!, k = 0, 1, 2, \dots$, where $g(x)$ and $f(x)$ are formal power series with $g(0) \neq 0$, $f(0)=0$ and $f'(0)\neq 0$.   The matrix corresponding to the pair $f(x), g(x)$ is denoted by  $\langle g(x),f(x)\rangle$.

If we multiply  $\langle g(x),f(x)\rangle$ by a column vector $(c_0, c_1, \dots)^T$ with exponential generating function $h(x)$, then the resulting column vector has exponential generating function $g(x)h(f(x))$. This property is known as the fundamental theorem of exponential Riordan arrays.  The product of two exponential Riordan arrays $\langle g(x),f(x)\rangle$ and $\langle h(x),\ell(x)\rangle$ is then defined by:
 $$\langle g(x),f(x)\rangle *\langle h(x),\ell(x)\rangle =\left\langle g(x)h\left(f(x)\right), \ell\left(f(x)\right)\right\rangle.$$
 The set of all exponential Riordan matrices is a group  under the operator $*$ (cf.\ \cite{Barry, Riordan}).\\

  For example,  the Pascal matrix $\Pa$, the Stirling matrix of the second kind  $\mathcal{S}_2$,  and the Stirling matrix of the first kind  $\mathcal{S}_1$  are all given by the Riordan matrices:

 \begin{align*}
 \Pa=\langle e^x,x\rangle=&\left[\binom nk \right]_{n, k \geq 0}, \quad \quad   \mathcal{S}_2=\langle 1,e^x-1\rangle=\left[{n \brace k} \right]_{n, k \geq 0},  \quad \quad   \\
&\mathcal{S}_1=\langle 1, -\ln(1-x) \rangle=\left[{n \brack k} \right]_{n, k \geq 0}.
 \end{align*}

From Equations \eqref{Sgenfunc} and  \eqref{Sgenfunc2}, and the definition of Riordan matrix we obtain the following theorem.
\begin{theorem}
For all $S\subseteq \Z^+$ with $1\in S$, the matrices $\M_{S,r}$ and $\La_{S,r}$ are exponential Riordan matrices given by
$$\M_S=\left\langle \left(\sum_{s\in S} \frac{x^{s-1}}{(s-1)!}\right)^r, \sum_{s \in S} \frac{x^{s}}{s!}  \right\rangle \quad \quad \La_S=\left\langle \left(\sum_{s\in S} x^{s-1} \right)^r,\sum_{s \in S} \frac{x^{s}}{s} \right\rangle.$$
\end{theorem}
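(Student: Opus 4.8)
The plan is to verify the claim directly from the definition of an exponential Riordan array, since everything reduces to reading off the column generating functions already established in \eqref{Sgenfunc} and \eqref{Sgenfunc2}. First I would recall that the matrix $\langle g(x), f(x)\rangle$ is, by definition, the lower triangular matrix whose $k$-th column has exponential generating function $g(x)\left(f(x)\right)^k/k!$. So for $\M_{S,r}$ I would set $g(x)=\left(\sum_{s\in S} \frac{x^{s-1}}{(s-1)!}\right)^r$ and $f(x)=\sum_{s\in S}\frac{x^s}{s!}$, and then observe that \eqref{Sgenfunc} states precisely that $\sum_{n\geq 0}{n\brace k}_{S,r}\frac{x^n}{n!}=g(x)\left(f(x)\right)^k/k!$, where the sum may be started at $n=0$ because ${n\brace k}_{S,r}=0$ for $n<k$. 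The analogous identification for $\La_{S,r}$ comes verbatim from \eqref{Sgenfunc2} after dropping the factorial denominators.

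The remaining work is to check the three structural conditions $g(0)\neq 0$, $f(0)=0$, and $f'(0)\neq 0$, and this is exactly where the hypothesis $1\in S$ enters. Since $S\subseteq\Z^+$ every exponent $s$ appearing in $f(x)$ satisfies $s\geq 1$, so $f(0)=0$ holds automatically. To obtain $f'(0)\neq 0$ I would differentiate to get $f'(x)=\sum_{s\in S}\frac{x^{s-1}}{(s-1)!}$, whose constant term comes solely from the $s=1$ summand and equals $1$; this summand is present precisely because $1\in S$. That same $s=1$ summand gives the inner sum defining $g(x)$ a nonzero constant term $1$, whence $g(0)=1^r=1\neq 0$. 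The computation for $\La_{S,r}$ is identical.

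Finally I would confirm lower triangularity, i.e. that ${n\brace k}_{S,r}=0$ whenever $k>n$: each of the $k$ non-special blocks must be non-empty and filled from the $n$ non-special elements, forcing $n\geq k$, with the same counting argument for cycles in the first-kind case. Once these checks are assembled, both matrices satisfy every clause of the definition of an exponential Riordan array, and the displayed pairs follow.

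I do not expect a genuine obstacle here; the content is bookkeeping against the already-derived generating functions. The only subtle point — and the reason the hypothesis is stated — is that without $1\in S$ the series $f$ would begin at a higher power of $x$, killing both $f'(0)$ and the constant term of $g$, so that $\langle g,f\rangle$ would fail to be a bona fide Riordan array. I would flag this explicitly rather than let it pass silently.
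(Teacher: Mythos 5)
Your proposal is correct and follows the same route as the paper, which simply observes that the column generating functions \eqref{Sgenfunc} and \eqref{Sgenfunc2} match the definition of an exponential Riordan array. You usefully make explicit the checks $g(0)\neq 0$, $f(0)=0$, $f'(0)\neq 0$ and the role of the hypothesis $1\in S$, which the paper leaves implicit.
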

It is clear that the row sum of the matrix  $\M_{S,r}$ are the $(S,r)$-Bell numbers $B_{n, S, r}$.

The inverse exponential Riordan array of $\M_{S,r}$ and $\La_{S,r}$ are denoted by
$$\T_{S,r}:=\left[{n \brace k}_{S,r}^{-1}\right]_{n, k \geq 0} \quad  \text{and} \quad \U_{S,r}:=\left[{n \brack k}_{S,r}^{-1}\right]_{n, k \geq 0}.$$

For the particular case $r=0$,  Engbers et al. \cite{Eng} gave an interesting combinatorial interpretation for the absolute  values of the entries ${n \brace k}_S^{-1}$ and ${n \brack k}_S^{-1}$ by using Schr\"oder trees.

Since $\M_{S,r}*\T_{S,r} = \I,$ where $\I$ is the identity matrix, we have the orthogonality relation:
\begin{align*}
\sum_{i=k}^n {n \brace i}_{S,r}  {i \brace k}_{S,r}^{-1}&=\sum_{i=k}^n  {n \brace i}_{S,r}^{-1} {i \brace k}_{S,r}
=\delta_{k,n}.
\end{align*}

The orthogonality relation gives us the inverse relation:
\begin{align*}
f_n=\sum_{k=0}^n{n \brace k}_{S,r}^{-1} g_k  \iff g_n&=\sum_{k=0}^n{n \brace k}_{S,r} f_k.
\end{align*}
Let us introduce the $(S,r)$-Bell polynomials by
$$B_{n, S, r}(x):=\sum_{k=0}^n{n \brace k}_{S,r}x^k.$$
From  the definition of the polynomials $B_{n, S, r}(x)$ we obtain the  equality:
$$X = \M_{S,r}^{-1}\mathcal{B}_{S,r},$$
where $X=[1, x, x^2, \dots]^T$ and $\mathcal{B}_S=[B_{0,S,r}(x), B_{1,S,r}(x), B_{2,S,r}(x), \dots]^T$. Further,  $X=\T_{S,r}\mathcal{B}_{S,r}$ and
$$x^n=\sum_{k=0}^n {n \brace k}_{S,r}^{-1} B_{k,S,r}(x).$$
 Therefore,
 \begin{align}
 B_{n,S,r}(x)=x^n-\sum_{k=0}^{n-1}{n \brace k}_{S,r}^{-1} B_{k,S,r}(x), \quad n\geq 0.\label{iddet}
 \end{align}
 From the above identity we obtain a determinantal identity for $B_{n,S,r}(x)$.
 \begin{theorem}\label{teo3}
 For all $S\subseteq \Z^+$ with $1\in S$, the $(S,r)$-Bell polynomials satisfy
 $$B_{n,S,r}(x)=(-1)^n\begin{vmatrix}
 1 & x & & \cdots && x^{n-1} & x^n\\
  1 & {1 \brace 0}_{S,r}^{-1} & &\cdots && {n-1 \brace 0}_{S,r}^{-1} & {n \brace 0}_{S,r}^{-1}\\
    0 & 1 && \cdots && {n-1 \brace 1}_{S,r}^{-1}  & {n \brace 1}_{S,r}^{-1} \\
   \vdots &  & & \cdots& & & \vdots\\
  0 & 0  && \cdots & & 1 & {n \brace n-1}_{S,r}^{-1}\\
 \end{vmatrix}.$$
 \end{theorem}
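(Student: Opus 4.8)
The plan is to read the relation \eqref{iddet} not as a recurrence to be iterated but as a triangular system of linear equations, and to isolate $B_{n,S,r}(x)$ by Cramer's rule. Concretely, the identity $X = \T_{S,r}\mathcal{B}_{S,r}$ says, for each $m = 0,1,\dots,n$,
$$x^m = \sum_{k=0}^m {m \brace k}_{S,r}^{-1}\, B_{k,S,r}(x),$$
so truncating to indices $0 \le m,k \le n$ yields a square system whose coefficient matrix is the leading $(n+1)\times(n+1)$ block of $\T_{S,r}$. Since $1\in S$ forces ${m\brace m}_{S,r}=1$, the matrix $\M_{S,r}$ is lower triangular with unit diagonal, hence so is its inverse $\T_{S,r}$; in particular ${m\brace m}_{S,r}^{-1}=1$ and the coefficient determinant of the truncated system equals $1$.

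First I would apply Cramer's rule to solve for the last unknown $B_{n,S,r}(x)$: replace the final column of the coefficient matrix by the right-hand side vector $(1,x,\dots,x^n)^T$ and take the determinant. Because the coefficient determinant is $1$, this determinant \emph{is} $B_{n,S,r}(x)$. The resulting matrix has its columns $0,\dots,n-1$ equal to the corresponding columns of $\T_{S,r}$ and its last column equal to the vector of powers of $x$. It then remains to match this Cramer matrix against the matrix displayed in the theorem. Transposing the theorem's matrix (which leaves the determinant unchanged) turns its top row $(1,x,\dots,x^n)$ into a first column and its lower rows into the columns of $\T_{S,r}$; thus, up to transposition, the two matrices differ only in that the column of powers of $x$ sits in the first position rather than the last.

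The hard part will be the sign bookkeeping: cyclically moving the $x$-column from the first to the last position costs exactly $n$ adjacent transpositions, and I must confirm that this accounts for precisely the stated factor $(-1)^n$. Everything else is routine, and I would record the small cases $n=0$ (the determinant is $1=B_{0,S,r}(x)$) and $n=1$ (the determinant gives $x-{1\brace 0}_{S,r}^{-1}$) as a sanity check against \eqref{iddet}. An equally viable alternative is to proceed by induction on $n$, expanding $\det$ along its last column and verifying directly that $(-1)^n$ times the determinant obeys the same recurrence \eqref{iddet} and the same initial value as $B_{n,S,r}(x)$; this avoids Cramer's rule but demands the same careful sign accounting in the cofactor expansion, so I expect the Cramer's-rule route to be the cleanest.
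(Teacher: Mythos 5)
Your proposal is correct and rests on exactly the same foundation as the paper's proof: the triangular system $X=\T_{S,r}\mathcal{B}_{S,r}$, i.e.\ Equation \eqref{iddet}, together with the observation that $1\in S$ forces unit diagonal entries so the coefficient determinant is $1$. The paper's own one-line proof is precisely your stated ``alternative'' (expand the determinant along the last column and recognize the recurrence \eqref{iddet}), and your preferred Cramer's-rule packaging---with the factor $(-1)^n$ arising from the $n$ adjacent column transpositions, which does check out---is just a non-inductive rephrasing of the same computation.
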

 \begin{proof}
This identity follows from Equation \eqref{iddet} and by expanding the determinant by the last column.
 \end{proof}

 For example, if $S=\{1, 3, 8\}$ and $r=2$, then
\begin{align*}
\M_{\{1, 3, 8 \}, 2}&=\left\langle \left(1 + \frac{x^2}{2!} + \frac{x^7}{7!}\right)^2 , x + \frac{x^3}{3!} + \frac{x^8}{8!} \right\rangle\\
&=\left(
\begin{array}{ccccccccc}
 1 & 0 & 0 & 0 & 0 & 0 & 0 & 0 & 0 \\
 0 & 1 & 0 & 0 & 0 & 0 & 0 & 0 & 0 \\
 0 & 0 & 1 & 0 & 0 & 0 & 0 & 0 & 0 \\
 0 & 7 & 0 & 1 & 0 & 0 & 0 & 0 & 0 \\
 0 & 0 & 16 & 0 & 1 & 0 & 0 & 0 & 0 \\
 0 & 50 & 0 & 30 & 0 & 1 & 0 & 0 & 0 \\
 0 & 0 & 220 & 0 & 50 & 0 & 1 & 0 & 0 \\
 0 & 210 & 0 & 700 & 0 & 77 & 0 & 1 & 0 \\
 0 & 17 & 2240 & 0 & 1820 & 0 & 112 & 0 & 1 \\
   \vdots  &  &  &  & \vdots &  &  &  & \vdots \\
\end{array}
\right),
\end{align*}
and
$$\T_{\{1, 3, 8\},2}=\left[{n \brace k}_{\{1, 3, 8\},2}^{-1}\right]_{n, k \geq 0}=
\left(
\begin{array}{ccccccccc}
 1 & 0 & 0 & 0 & 0 & 0 & 0 & 0 & 0 \\
 0 & 1 & 0 & 0 & 0 & 0 & 0 & 0 & 0 \\
 0 & 0 & 1 & 0 & 0 & 0 & 0 & 0 & 0 \\
 0 & -7 & 0 & 1 & 0 & 0 & 0 & 0 & 0 \\
 0 & 0 & -16 & 0 & 1 & 0 & 0 & 0 & 0 \\
 0 & 160 & 0 & -30 & 0 & 1 & 0 & 0 & 0 \\
 0 & 0 & 580 & 0 & -50 & 0 & 1 & 0 & 0 \\
 0 & -7630 & 0 & 1610 & 0 & -77 & 0 & 1 & 0 \\
 0 & -17 & -38080 & 0 & 3780 & 0 & -112 & 0 & 1 \\
\end{array}
\right).$$
Notice that in this example the sequence $a(n)=7,16,30,50,77,112,\ldots$ (A00581 in \cite{OEIS}) arises in both matrices (up to sign). One of the combinatorial interpretations of these numbers is the following: let $X$ be a $[n+2]$ element set and $Y$ a $2$-subset of $X$, then $a(n)_{n\geq 1}$ is the number of $(n-1)$-subsets of $X$ intersecting $Y$.

The first few $(\{1, 3, 8\},2)$-Bell polynomials are
\begin{align*}
&1, \quad x, \quad x^2, \quad  x^3+7 x, \quad x^4+16 x^2, \quad x^5+30 x^3+50 x, \quad x^6+50 x^4+220 x^2,\\
&x^7+77x^5+700 x^3+210 x, \quad x^8+112 x^6+1820 x^4+2240 x^2+17 x, \dots
\end{align*}
   In particular,
   \begin{align*}
   B_{6, \{1, 3, 8\}, 2}(x)=x^6+50 x^4+220 x^2 =\begin{vmatrix}
 1 & x & x^2 & x^3 & x^4 & x^{5} & x^6\\
 1 & 0 & 0 & 0 & 0 & 0 & 0\\
    0 & 1 & 0& -7 & 0& 160  & 0 \\
    0 & 0 & 1& 0 & -16& 0  & 580 \\
    0 & 0 & 0&1 & 0& -30  & 0 \\
     0 & 0 & 0&0 & 1& 0  & -50 \\
     0 & 0 & 0&0 & 0& 1  & 0  \end{vmatrix}.
 \end{align*}
Analogously to the definition of $(S,r)$-Bell polynomials, we can define  the
\emph{$(S,r)$-factorial polynomials} $A_{n,S,r}(x)$ by the expression
$$A_{n,S,r}(x)=\sum_{k=0}^n{n\brack k}_{S,r}x^k.$$
Notice that if $S=\Z^+$ and $r=0$, then $A_{n, \Z^+,0}(1)=n!$. Some of their combinatorial and arithmetical properties for the cases $S=\{1, 2, \dots, m\}$ and $S=\{m, m+1, \dots\}$ have been studied in \cite{Moll}.

From a similar argument, we have the following theorem:
 \begin{theorem}\label{teo3bb}
 For all $S\subseteq \Z^+$ with $1\in S$, the $(S,r)$-factorial polynomials satisfy
 $$A_{n,S,r}(x)=(-1)^n\begin{vmatrix}
 1 & x & & \cdots && x^{n-1} & x^n\\
  1 & {1 \brack 0}_{S,r}^{-1} & &\cdots && {n-1 \brack 0}_{S,r}^{-1} & {n \brack 0}_{S,r}^{-1}\\
    0 & 1 && \cdots && {n-1 \brack 1}_{S,r}^{-1}  & {n \brack 1}_{S,r}^{-1} \\
   \vdots &  & & \cdots& & & \vdots\\
  0 & 0  && \cdots & & 1 & {n \brack n-1}_{S,r}^{-1}\\
 \end{vmatrix}.$$
 \end{theorem}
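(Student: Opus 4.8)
The plan is to follow the proof of Theorem~\ref{teo3} verbatim, substituting the first-kind matrices and polynomials for their second-kind counterparts; the whole argument reduces to producing the first-kind analogue of the relation~\eqref{iddet}, after which the determinant is read off exactly as before.

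First I would pass to matrix form. Writing $X=[1,x,x^{2},\dots]^{T}$ and $\mathcal{A}_{S,r}=[A_{0,S,r}(x),A_{1,S,r}(x),\dots]^{T}$, the definition $A_{n,S,r}(x)=\sum_{k=0}^{n}{n\brack k}_{S,r}x^{k}$ is precisely $\mathcal{A}_{S,r}=\La_{S,r}X$. Because $1\in S$, the matrix $\La_{S,r}$ is the exponential Riordan array of the preceding theorem, so it is lower triangular with unit diagonal (indeed ${n\brack n}_{S,r}=1$); hence its inverse $\U_{S,r}=\La_{S,r}^{-1}$ is lower triangular with ${n\brack n}_{S,r}^{-1}=1$. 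Multiplying by $\U_{S,r}$ gives $X=\U_{S,r}\mathcal{A}_{S,r}$, that is $x^{n}=\sum_{k=0}^{n}{n\brack k}_{S,r}^{-1}A_{k,S,r}(x)$, and isolating the $k=n$ term (whose coefficient is $1$) yields the sought analogue of~\eqref{iddet}:
$$A_{n,S,r}(x)=x^{n}-\sum_{k=0}^{n-1}{n\brack k}_{S,r}^{-1}A_{k,S,r}(x),\qquad n\ge 0.$$

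With this relation the determinantal identity follows exactly as in Theorem~\ref{teo3}. The equations $\sum_{k=0}^{m}{m\brack k}_{S,r}^{-1}A_{k,S,r}(x)=x^{m}$ for $0\le m\le n$ form a unit lower triangular linear system in the unknowns $A_{0,S,r}(x),\dots,A_{n,S,r}(x)$, whose coefficient determinant equals $1$. Solving for $A_{n,S,r}(x)$ by Cramer's rule expresses it as the determinant of the coefficient matrix with its final column replaced by $(1,x,\dots,x^{n})^{T}$. Transposing this matrix and then cyclically moving the powers-of-$x$ row from the bottom to the top — a permutation of the $n+1$ rows of sign $(-1)^{n}$ — brings it into precisely the form displayed in the statement, which accounts for the global prefactor $(-1)^{n}$. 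Equivalently, as remarked for Theorem~\ref{teo3}, one may verify the displayed determinant directly by expanding along its last column and confirming the recurrence by induction.

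I expect the only delicate point to be the sign and layout bookkeeping of this last step: one must check that the $(i,j)$ entry of the displayed matrix (for $i\ge 1$) is ${j\brack i-1}_{S,r}^{-1}$, so that the zeros below the sub-diagonal come from the lower-triangularity of $\U_{S,r}$ and the $1$'s on the sub-diagonal from its unit diagonal, and that the cyclic row shift contributes exactly $(-1)^{n}$ rather than $(-1)^{n+1}$. The base case $n=0$ is immediate, both sides being $1$, and every remaining manipulation is formal and identical to the second-kind computation carried out for Theorem~\ref{teo3}.
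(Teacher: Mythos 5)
Your argument is correct and is essentially the paper's own: the paper proves Theorem~\ref{teo3bb} ``from a similar argument'' to Theorem~\ref{teo3}, namely by deriving the first-kind analogue $A_{n,S,r}(x)=x^{n}-\sum_{k=0}^{n-1}{n\brack k}_{S,r}^{-1}A_{k,S,r}(x)$ of Equation~\eqref{iddet} from $X=\U_{S,r}\mathcal{A}_{S,r}$ and then expanding the displayed determinant along its last column, exactly as you do. Your additional Cramer's-rule framing and sign bookkeeping are a harmless elaboration of the same computation.
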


\section{Combinatorial interpretations of the $(S,r)$-Stirling matrices and their inverses}
In this section we provide a combinatorial interpretation of the inverses of the $(S,r)$-Stirling matrices of the first and second kind. For this purpose, we introduce posets whose Möbius function is given by these matrices.

\subsection{Composition-partition pairs and ordered composition-permutation pairs}
An $r$-composition of a set $\mathbf{U}$ is an $r$-tuple of disjoint sets $\mathbf{U}=(U_1,U_2,\dots,U_r)$ (some of which may be empty)  whose union is $U$,
$$U_1\uplus U_2\uplus\dots\uplus U_r=U.$$
We consider pairs of the form $(\mathbf{V}, \pi)$ where $\mathbf{V}$ is an $r$-composition of a subset $V$ of $U$ and $\pi$ is a set partition of the complementary set $U-V$. Such objects will be called {\em composition-partition pairs}.

We have the following interpretation of the $(S,r)$-Stirling numbers of the second kind.  For shorter notation, let $S'$ denote the set of integers that we obtain by reducing each integer in $S$ by $1$, $S'=\{s-1|s\in S\}$. We call $S'$ the {\em derivative} of $S$.
\begin{proposition}\label{stirl2}
	The $(S,r)$-Stirling number of the second kind ${n\brace k}_{S,r}$ counts the composition-partition pairs $(\mathbf{V}, \pi)$ over a set of $n$ elements satisfying the following two conditions.
	\begin{enumerate}
		\item \label{cond1} The sizes of the sets in the composition are all in $S'$.
		\item \label{cond2} The partition $\pi$ has exactly $k$ blocks, the sizes of each of which is in $S$.
		\end{enumerate}
\end{proposition}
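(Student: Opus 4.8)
The plan is to construct an explicit bijection between the set $\Pi_{S,r}(n,k)$ of $(S,r)$-partitions of $[n+r]$ and the collection of composition-partition pairs $(\mathbf{V},\pi)$ over the $n$ non-special elements $U=\{r+1,\dots,n+r\}$ satisfying conditions \eqref{cond1} and \eqref{cond2}. The starting observation is that every $(S,r)$-partition $P$ has exactly $r$ \emph{special} blocks (one containing each special element $i\in[r]$, since the special elements lie in distinct blocks) and exactly $k$ \emph{non-special} blocks, and that all of these blocks have size in $S$.

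First I would describe the forward map. Given $P$, for each $i\in[r]$ let $\Block_i$ denote the special block containing the element $i$, and set $V_i:=\Block_i\setminus\{i\}$, the non-special part of that block. The blocks $\Block_i$ are pairwise disjoint, so $\mathbf{V}=(V_1,\dots,V_r)$ is an $r$-composition of $V:=V_1\uplus\dots\uplus V_r$. Since $|\Block_i|\in S$, we have $|V_i|=|\Block_i|-1\in S'$, giving condition \eqref{cond1}. The non-special blocks of $P$ are disjoint from $V$ and partition the complement $U-V$; letting $\pi$ be this partition yields exactly $k$ blocks, each of size in $S$, which is condition \eqref{cond2}.

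Next I would exhibit the inverse map, recovering $P$ from $(\mathbf{V},\pi)$ by reattaching the labels: set $\Block_i:=V_i\cup\{i\}$ for each $i\in[r]$ and keep the blocks of $\pi$ unchanged. Here $|V_i|\in S'$ forces $|\Block_i|=|V_i|+1\in S$, so each reconstructed special block has admissible size, and the result is a genuine $(S,r)$-partition with $r$ special blocks and $k$ non-special blocks. The two maps are visibly mutually inverse, so the bijection follows and with it the proposition.

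The one point needing care is the edge case of empty parts in the composition. A part $V_i=\emptyset$ corresponds to a singleton special block $\{i\}$, which is admissible precisely when $1\in S$, equivalently when $0\in S'$. Thus the permission of empty parts in an $r$-composition, read through condition \eqref{cond1}, matches the size constraint on special blocks exactly, so no partition is lost or spuriously introduced. I expect this bookkeeping, together with the observation that the ordered $r$-tuple structure of $\mathbf{V}$ is precisely the record of which special element $i\in[r]$ labels each special block (this is what distinguishes a composition from a partition on the special side), to be the main thing to state carefully rather than a genuine difficulty.
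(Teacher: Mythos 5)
Your proposal is correct and follows essentially the same route as the paper: both construct the bijection sending an $(S,r)$-partition to the pair $(\mathbf{V},\pi)$ with $V_i=\Block_i\setminus\{i\}$ and $\pi$ the collection of non-special blocks, and observe that the size conditions translate between $S$ and $S'$. Your additional remark about empty parts corresponding to singleton special blocks is a careful elaboration of a point the paper leaves implicit, but the argument is the same.
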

\begin{proof}
	We establish a bijection between the set $\Pi_{S,r}(n,k)$ and the given composition-partition pairs. Let $\Block_1,\Block_2,\dots, \Block_r, \Block_{r+1},\dots, \Block_{k+r}$ be the blocks of a partition in $\Pi_{S,r}(n,k)$ arranged in such a way that the first $r$ blocks ($\Block_1,\Block_2,\dots, \Block_r$) contain the elements $1,\ldots, r$; i.e., $i\in \Block_i$ for $i=1,2,\dots,r.$ Define the composition-partition pair $(\mathbf{V},\pi)$ as follows: $V_i=\Block_i-\{i\}$ and $\pi=\{\Block_{r+1},\Block_{r+2},\dots,\Block_{r+k}\}.$ The composition-partition pair $(\mathbf{V},\pi)$ (over the $n$-element set $\{r+1,r+2,\dots,r+n\}$) clearly satisfies the conditions. The correspondence is obviously reversible and hence bijective.
\end{proof}
We use the notation $\Pi_{S,r}(n,k)$ for the set of composition-partition pairs $(\mathbf{V},\pi) $ described in Proposition \ref{stirl2}, and $\Pi_{S,r}(n)$ for the same kind of composition-partition pairs without restrictions on the number of blocks of $\pi$.

A similar combinatorial interpretation can be given for the $(S,r)$-Stirling numbers of the first kind.
For our purposes it is useful to fix a a certain order of the elements in the cycles and of the disjoint cycles. In our notation each cycle lists its least element first and the cycles are sorted in increasing order by their first element. For instance,  $(1\,5\,7)(2\,4\,6)(3\,8)$. Fixing this convention, we view a cycle as a linear order of its elements.
We consider compositions enriched with linear orders,
$\boldsymbol{\ell}=(\ell_1,\ell_2,\dots,\ell_r)$, each $\ell_i$ being a linear order on the set $V_i$, with $\mathbf{V}=(V_1,V_2,\dots,V_r)$ being an $r$-composition of some set $V$. Such a tuple $\boldsymbol{\ell}$ will be called an {\em ordered composition}.

\begin{proposition}\label{prop.fstirling}
	The $(S,r)$-Stirling number of the first kind ${n\brack k}_{S,r}$ count the ordered composition-permutation pairs $(\boldsymbol{\ell}, \sigma)$ over a set of $n$ elements, satisfying the following two conditions.
	\begin{enumerate}
		\item  The sizes of the linear orders in the composition are all in $S'$.
		\item The permutation $\sigma$ has exactly $k$ cycles, the sizes of each of them being in $S$.
	\end{enumerate}
\end{proposition}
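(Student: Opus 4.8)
The plan is to mimic the bijective proof of Proposition~\ref{stirl2}, replacing blocks by cycles and sets by linear orders, the one genuinely new ingredient being the passage from a cyclic order to a linear order. Concretely, I would exhibit a size-preserving bijection between the objects counted by ${n\brack k}_{S,r}$ --- the $r$-permutations of $[n+r]$ into $k+r$ cycles whose cycle sizes all lie in $S$ --- and the ordered composition-permutation pairs $(\boldsymbol{\ell},\sigma)$ over the $n$-element set $\{r+1,r+2,\dots,r+n\}$ described in the statement.

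First I would unpack the input. Given such an $r$-permutation, let $C_1,\dots,C_r$ be the $r$ special cycles, indexed so that $i\in C_i$ for $i=1,\dots,r$, and let the remaining $k$ cycles be the non-special ones. Since the special elements $1,\dots,r$ are strictly smaller than every non-special element $r+1,\dots,r+n$, the element $i$ is automatically the least element of $C_i$. Under the least-element-first convention fixed just before the statement, we may therefore write $C_i=(i\,a_1\,a_2\cdots a_{s_i-1})$ uniquely, where $s_i=|C_i|\in S$.

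Next I would read off the pair. Deleting the anchoring special element $i$ from $C_i$ leaves the linear order $\ell_i=(a_1,a_2,\dots,a_{s_i-1})$ on the set $V_i=C_i-\{i\}$; since $s_i\in S$, its size $s_i-1$ lies in $S'=\{s-1:s\in S\}$, which is condition~(1). Collecting the $r$ orders as $\boldsymbol{\ell}=(\ell_1,\dots,\ell_r)$ produces an ordered composition of $V_1\uplus\cdots\uplus V_r$, while the $k$ untouched non-special cycles form a permutation $\sigma$ of the complementary set having exactly $k$ cycles, each of size in $S$, which is condition~(2). Reversibility is then immediate: from $(\boldsymbol{\ell},\sigma)$ one recovers $C_i$ by prepending the special element $i$ to $\ell_i$, and adjoins the cycles of $\sigma$, recreating the original $r$-permutation.

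The one place this differs from the second-kind argument --- and the step I expect to be the only real obstacle --- is this cycle/linear-order correspondence: a cycle of length $s$ admits $(s-1)!$ distinct readings, and one must check that pinning the least (special) element to the front matches these $(s-1)!$ readings bijectively with the $(s-1)!$ linear orders on the remaining $s-1$ elements. The inequality $i\le r<r+1$ is exactly what guarantees the special element occupies this anchoring position, so no choice is lost or double-counted. As a consistency check, this is precisely what makes the factor $\bigl(\sum_{s\in S}x^{s-1}\bigr)^r$ in the generating function \eqref{Sgenfunc2} --- the exponential generating function for an $r$-tuple of linear orders with sizes in $S'$, since a linear order of size $m$ contributes $m!\,x^m/m!=x^m$ --- agree with the contribution of the $r$ special cycles, while the remaining factor $\tfrac{1}{k!}\bigl(\sum_{s\in S}x^s/s\bigr)^k$ accounts for the unordered collection of $k$ cycles forming $\sigma$.
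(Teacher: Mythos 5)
Your proof is correct and is exactly the argument the paper intends: the paper merely states that the proof is analogous to that of Proposition~\ref{stirl2}, and your bijection (anchoring each special cycle at its least element $i$, deleting $i$ to obtain a linear order of size in $S'$, and keeping the $k$ non-special cycles as $\sigma$) is precisely that analogue, with the cycle/linear-order correspondence correctly justified. The generating-function consistency check against \eqref{Sgenfunc2} is a nice confirmation but not needed.
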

The proof of the Proposition \ref{prop.fstirling} is analogue to the proof of Proposition \ref{stirl2}.
We denote by $\mathbf{P}_{S,r}(n,k)$ the set of all ordered compositions-permutation pairs and by  $\mathbf{P}_{S,r}(n)$ all these pairs without restrictions on the number of cycles in $\sigma$.

\subsection{The special struture of $S$}
In order to give a combinatorial interpretation of the inverses of restricted Stirling matrices of the first and second kind, we have to assume that the set $S$ has a special structure. This structure is necessary to construct a partial order on the composition-partition (resp. ordered composition-permutation) pairs, whose M\"obius functions will give us the respective inverse matrices $\T_{S,r}$ and $\U_{S,r}$.

\begin{definition}\label{1+monoid}Let $S$ be a subset of $\Z^+$. It is said to be a $^+1$-monoid if
	\begin{enumerate}\item $1\in S$; and
		\item for every sequence $s_1, s_2,\dots, s_\ell$ of elements in $S$ with $\ell\in S$, we have that
 the sum $\sum_{j=1}^\ell s_j$ is also in $S$.
		\end{enumerate}	
\end{definition}
As a consequence of the definition of $^+1$-monoid we obtain the following proposition.
\begin{proposition}\label{moduleS}
		If $S$ is a $^+1$-monoid then for every sequence $s_1, s_2,\dots, s_\ell$ of elements in $S$, where the integer $\ell$ is  in $S'$, the sum $\sum_{j=1}^\ell s_j$ is in $S'$.
\end{proposition}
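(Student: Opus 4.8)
The plan is to unwind the definitions of the derivative $S'$ and of the $^+1$-monoid, and to observe that the conclusion follows from a single application of the closure axiom after artificially prepending the element $1$ to the given sequence.

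First I would translate both the hypothesis and the conclusion through the shift that defines the derivative. By definition $x \in S'$ holds exactly when $x + 1 \in S$. So the hypothesis $\ell \in S'$ means precisely that $\ell + 1 \in S$, and the conclusion $\sum_{j=1}^\ell s_j \in S'$ is equivalent to $1 + \sum_{j=1}^\ell s_j \in S$. Thus the entire proposition reduces to showing that this one sum lies in $S$.

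Next I would exhibit that sum as a legitimate instance of closure axiom (2) in Definition \ref{1+monoid}. Using that $1 \in S$ by axiom (1), I consider the augmented sequence of $\ell + 1$ elements $1, s_1, s_2, \dots, s_\ell$, each of which lies in $S$. The number of terms in this sequence is $\ell + 1$, which I have just observed lies in $S$ because $\ell \in S'$. Hence axiom (2) applies verbatim to this sequence and yields $1 + \sum_{j=1}^\ell s_j \in S$, which is exactly what is needed; subtracting $1$ then places $\sum_{j=1}^\ell s_j$ in $S'$.

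The only real point to get right, and the one place the argument could go astray if handled carelessly, is keeping straight which quantity must lie in $S$ versus in $S'$: the $^+1$-monoid axiom requires the \emph{length} of the summed sequence to be in $S$, whereas we are handed a length $\ell$ lying in $S'$. The resolution is precisely the bookkeeping above; passing from $\ell \in S'$ to $\ell + 1 \in S$ matches the length of the augmented sequence, while the extra leading $1$ simultaneously raises the length by one and shifts the resulting sum by one, landing that sum back in $S'$. No estimates or computations beyond this single shift are involved.
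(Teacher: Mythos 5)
Your proof is correct and is essentially identical to the paper's: the paper likewise augments the sequence with an extra element equal to $1$ (appended as $s_{\ell+1}$ rather than prepended), notes that the new length $\ell+1$ lies in $S$, and applies the closure axiom to conclude $1+\sum_{j=1}^\ell s_j\in S$, i.e.\ $\sum_{j=1}^\ell s_j\in S'$. No differences worth noting.
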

\begin{proof}
	Setting $s_{\ell+1}=1$, all the elements of the sequence $s_1, s_2,\dots, s_\ell, s_{\ell+1}$ are in $S$, and $\ell+1$ is also in $S$. Then  $s_1+s_2+\dots+s_\ell+s_{\ell+1}=s_1+s_2+\dots+s_\ell+1$ is in $S$, and the result follows.
\end{proof}

\begin{example}The set of odd positive integers $\Odd^+=\{2k+1|k=0,1,2,\dots\}$ is a $^+1$-monoid. If we add an odd number of odd integers, the result is again an odd integer.  The same argument can be applied for the set of positive integers congruent to one modulo a fixed positive integer, $S_q=\{qk+1|k=0,1,2,\dots\}$.
	
Observe that the set $S_q'$ of multiples of $q$ is a submonoid of $\N$.
\end{example}
	
A simpler and equivalent condition for a set $S$ to be a $^+1$-monoid is the following.
\begin{lemma}\label{lem6}
	A set $S$ is a $^+1$-monoid if and only if
	\begin{enumerate}
		\item $1\in S$; and
		\item if $s_1, s_2\in S$, then $s_1+s_2-1\in S$.
	\end{enumerate}
 \end{lemma}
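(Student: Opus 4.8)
The plan is to prove the biconditional in Lemma~\ref{lem6} by establishing both implications, where the forward direction is immediate and the reverse direction is the one requiring an inductive argument. For the forward direction, suppose $S$ is a $^+1$-monoid. Condition (1), that $1 \in S$, holds by the very definition of a $^+1$-monoid. For condition (2), given $s_1, s_2 \in S$, I would apply the defining summation property with the specific sequence of length $\ell = 2$. Since $2 \in S$ (because $1 \in S$ and, applying the property to the sequence consisting of $1$ and $1$ with $\ell = 2$... but this is circular). Instead, the cleaner route is to use Proposition~\ref{moduleS}: the sequence $s_1, s_2$ has two terms and we want $\ell$ to play a role. Concretely, take the sequence $s_1, s_2$ of length $2$; since $1 \in S$, and we want to conclude $s_1 + s_2 - 1 \in S$, I would apply the $^+1$-monoid property to the length-$\ell$ sequence where $\ell = 2 \in S'$ sums land in $S'$, so $s_1 + s_2 \in S'$ meaning $s_1 + s_2 - 1 \in S$. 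This is exactly Proposition~\ref{moduleS} specialized to $\ell = 2$.

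For the reverse direction, assume (1) $1 \in S$ and (2) $s_1 + s_2 - 1 \in S$ whenever $s_1, s_2 \in S$. I must recover the full $^+1$-monoid property: for any sequence $s_1, \dots, s_\ell$ of elements of $S$ with $\ell \in S$, the sum $\sum_{j=1}^\ell s_j \in S$. The natural approach is induction on $\ell$. The base case $\ell = 1$ is trivial, since $1 \in S$ and the sum is just $s_1 \in S$. For the inductive step, I would peel off one term: write $\sum_{j=1}^\ell s_j = \left(\sum_{j=1}^{\ell-1} s_j\right) + s_\ell$ and try to combine the partial sum with $s_\ell$ using the pairwise rule (2). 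The key obstacle here is that rule (2) produces $s_1 + s_2 - 1$, not $s_1 + s_2$, so each application of (2) introduces a spurious $-1$; moreover the induction hypothesis requires $\ell - 1 \in S$, which need not hold even when $\ell \in S$.

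To handle these two difficulties simultaneously, I expect the right move is to prove a stronger auxiliary statement by induction on the number of summands $m$, independent of the membership of $m$ in $S$: namely, that for any $s_1, \dots, s_m \in S$, the ``defect-adjusted'' sum $\sum_{j=1}^m s_j - (m-1) \in S$. This is clean because rule (2) is precisely the $m=2$ case, and the inductive step combines $\sum_{j=1}^{m-1} s_j - (m-2) \in S$ (induction hypothesis) with $s_m \in S$ via rule (2) to get $\left(\sum_{j=1}^{m-1} s_j - (m-2)\right) + s_m - 1 = \sum_{j=1}^m s_j - (m-1) \in S$. Once this auxiliary fact is in hand, I specialize it to recover the $^+1$-monoid property: given a sequence $s_1, \dots, s_\ell$ with $\ell \in S$, I append $\ell - 1$ extra copies of the element $1 \in S$, producing a longer sequence of $m = \ell + (\ell - 1) = 2\ell - 1$ elements whose defect-adjusted sum is $\left(\sum_{j=1}^\ell s_j + (\ell - 1)\right) - (2\ell - 2) = \sum_{j=1}^\ell s_j - (\ell - 1)$, and then add back using $\ell \in S$ itself as a final summand through one more application of rule (2). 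The main obstacle is bookkeeping the $\pm 1$ offsets correctly so that the hypothesis $\ell \in S$ is consumed exactly once to cancel the accumulated defect; I would verify the arithmetic carefully at the final specialization step.
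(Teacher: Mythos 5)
Your reverse direction is correct and is essentially the paper's argument in different packaging: your auxiliary claim that $\sum_{j=1}^m s_j-(m-1)\in S$ for all $m$ and all $s_1,\dots,s_m\in S$ is exactly what the paper's iterated chain $s_1+\ell-1\in S\Rightarrow s_1+s_2+\ell-2\in S\Rightarrow\cdots$ produces, except that the paper folds the hypothesis $\ell\in S$ in at the first step rather than the last. Your detour through $\ell-1$ appended copies of $1$ is a harmless no-op (each appended $1$ raises the sum and the defect by the same amount); applying your auxiliary claim directly to the sequence $s_1,\dots,s_\ell,\ell$ of $\ell+1$ elements of $S$ already yields $\sum_{j=1}^{\ell}s_j+\ell-\ell=\sum_{j=1}^{\ell}s_j\in S$ in one stroke.

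The forward direction, however, has a genuine gap. You invoke Proposition \ref{moduleS} with $\ell=2$, but that proposition requires $\ell\in S'$, i.e.\ $3\in S$, which fails for perfectly good $^+1$-monoids such as $S=\{1\}$ or $S=\{1,q+1,2q+1,\dots\}$ with $q\geq 3$. Moreover, even where it does apply, its conclusion is $s_1+s_2\in S'$, which unwinds to $s_1+s_2+1\in S$ (since $x\in S'$ means $x+1\in S$), not to the desired $s_1+s_2-1\in S$; the shift goes the wrong way. The correct elementary argument, and the one the paper gives, is to apply the defining property of a $^+1$-monoid to the sequence $k_1=s_1$, $k_2=\cdots=k_{s_2}=1$ of length $s_2$: all of its terms lie in $S$, its length $s_2$ lies in $S$, and its sum is $s_1+(s_2-1)=s_1+s_2-1$, which is therefore in $S$.
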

\begin{proof}Assume that $S$ is a $^+1$-monoid. Letting $k_1=s_1, k_2=1, k_3=1,\dots, k_{s_2}=1,$ we obtain, since $s_2\in S$, that $k_1+k_2+\dots+k_{s_2}=s_1+s_2-1\in S$. Conversely, assume that  $s_1, s_2,\dots, s_\ell$ and $\ell$ are in $S$. Iteratively applying condition $2$ of the lemma, we have
\begin{align*}
s_1+\ell-1\in S& \Rightarrow s_1+s_2+\ell-2\in S\\
&\Rightarrow s_1+s_2+s_3+\ell-3\in S\\
&\ \ \vdots\\
& \Rightarrow s_1+s_2+\dots+s_\ell\in S.
\end{align*}
\end{proof}

As a direct consequence of the Lemma \ref{lem6}, we get the following proposition.
\begin{proposition}\label{monoid}
	The derivative of a $^+1$-monoid is a submonoid of the natural numbers under addition. Conversely, if a set $S'$ is a submonoid of $\N$, then $S=\{s+1|s\in S'\}$ is a $^+1$-monoid.
\end{proposition}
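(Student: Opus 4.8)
The plan is to reduce everything to the simpler characterization furnished by Lemma \ref{lem6}, which replaces the quantifier over arbitrary sequences in the definition of a $^+1$-monoid by the single two-term condition: $S$ is a $^+1$-monoid if and only if $1\in S$ and $s_1+s_2-1\in S$ whenever $s_1,s_2\in S$. Once this is in hand, both implications become a matter of shifting indices by $1$, since a submonoid of $(\N,+)$ is exactly a subset containing the additive identity $0$ and closed under addition. The translation dictionary is $a\in S'\iff a+1\in S$, which converts the base clause $1\in S$ into $0\in S'$ and the closure clause $s_1+s_2-1\in S$ into $a+b\in S'$.

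For the forward direction I would assume $S$ is a $^+1$-monoid and verify the two defining properties of a submonoid for $S'=\{s-1\mid s\in S\}$. First, $1\in S$ gives $0=1-1\in S'$, so $S'$ contains the identity. Second, given $a,b\in S'$, the elements $a+1$ and $b+1$ lie in $S$, so Lemma \ref{lem6} yields $(a+1)+(b+1)-1=a+b+1\in S$, whence $a+b\in S'$; thus $S'$ is closed under addition. For the converse I would assume $S'$ is a submonoid of $\N$ and check the two conditions of Lemma \ref{lem6} for $S=\{s+1\mid s\in S'\}$. Since $0\in S'$ we get $1=0+1\in S$. Given $s_1,s_2\in S$, the differences $s_1-1,s_2-1$ lie in $S'$, so by closure $(s_1-1)+(s_2-1)=s_1+s_2-2\in S'$, and therefore $s_1+s_2-1\in S$. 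Lemma \ref{lem6} then certifies that $S$ is a $^+1$-monoid.

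There is no real obstacle here: the content of the proposition is entirely absorbed by Lemma \ref{lem6}, and the computation is a one-line index shift in each direction. The only points requiring a moment of care are the bookkeeping of the additive identity (matching $1\in S$ with $0\in S'$, which uses that $\N$ is taken to include $0$, consistent with the preceding example where the multiples of $q$ are declared a submonoid) and the observation that the two constructions $S\mapsto S'$ and $S'\mapsto\{s+1\mid s\in S'\}$ are mutually inverse, so that the proposition in fact sets up a bijection between $^+1$-monoids and submonoids of $\N$. I would close with a remark noting this bijection, since it is what makes the derivative operation the natural correspondence underlying the combinatorial interpretations that follow.
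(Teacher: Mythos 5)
Your proof is correct and follows exactly the route the paper intends: the paper states the proposition as a direct consequence of Lemma \ref{lem6} and leaves the details to the reader, and your argument supplies precisely those details (the index shift $a\in S'\iff a+1\in S$ matching $1\in S$ with $0\in S'$ and $s_1+s_2-1\in S$ with closure under addition). Nothing further is needed.
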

\begin{proof}The proof is easy, and left to the reader.\end{proof}

\subsection{The partial order on the sets  $\Pi_{S,r}(n)$ and $\mathbf{P}_{S,r}(n)$}
Now we are able to define a partial order on the set $\Pi_{S,r}(n)$. Its M\"obius function is encoded by the matrix $\T_{S,r}$.

We introduce two operations on the set $\Pi_{S,r}(n)$. Let $(\mathbf{V},\pi) \in \Pi_{S,r}(n)$. We obtain another composition-partition pair $(\mathbf{V}',\pi')\in \Pi_{S,r}(n)$ by the following two operations.
\begin{enumerate}
\item\label{op1} The compositions remain unchanged, i.e., $\mathbf{V}=\mathbf{V}'$, and  $\pi'$ is obtained from $\pi$ by joining $\ell$ blocks of $\pi$, for some $\ell\in S$.
\item\label{op2} The components of $\mathbf{V}'$ are the same as the components of $\mathbf{V}$ except one, say $V_j$, which is augmented by some blocks of $\pi$; while $\pi$ is reduced by these blocks. Precisely:
\begin{eqnarray*}
		V'_i&=&V_i,\mbox{ for $i\neq j$, and } V'_j=V_j\cup\bigcup_{i=1}^\ell \Block_i,\; \ell\in S'; \mbox{and}\\
		\pi'&=&\pi-\{\Block_1, \Block_2,\dots, \Block_\ell\}.
		\end{eqnarray*}
\end{enumerate}
From now on, we follow the convention of separating the composition-partition ordered pair $(\mathbf{V},\pi )$ with double bars  $$\mathbf{V}||\pi :=(\mathbf{V},\pi ).$$
\begin{example}
Consider the set $\Pi_{\Odd,2}(6)$, where $\Odd$ denotes the set of odd integers, and its element $(\{1,2\},\emptyset)||3|4|5|6$. By operation $\ref{op1}$ we obtain for instance the element $(\{1,2\},\emptyset)||3\,4\,5|6$ and  by operation \ref{op2} the element $(\{1,2\},\{4,5\})||3|6$.
\end{example}
The operations (\ref{op1}) and (\ref{op2}) are closed on $\Pi_{S,r}(n)$; operation  (\ref{op1}) by Definition \ref{1+monoid}, operation (\ref{op2}) by Propositions \ref{moduleS} and \ref{monoid}. Hence we can define the following partial order on the set $\Pi_{S,r}(n)$.
\begin{definition}\label{def.orderst1}
Let $(\mathbf{V},\pi)$ and $(\mathbf{V}',\pi')$ be two elements of $\Pi_{S,r}(n)$. We will say that $(\mathbf{V},\pi)\leq (\mathbf{V}',\pi')$ if $(\mathbf{V}',\pi')$ is obtained from $(\mathbf{V},\pi)$ by any combination of the two above operations (\ref{op1}) and (\ref{op2}).
\end{definition}

The poset $\Pi_{S,r}(n)$ has a least element $\widehat{0}=(\emptyset,\emptyset,\dots,\emptyset)||1|2|\dots|n$. When $n\in S$, the maximal elements are of the form $\mathbf{V}||\emptyset$, $\mathbf{V}$ being a composition of $[n]$.

\begin{example}\label{stirling2inverse}
	For $S$ the set of positive odd integers, the Riordan matrix $\M_{S,r}$ is  given by
	$$ \M_{S,r}=\left\langle (\cosh(x))^r,\sinh(x) \right\rangle.$$ The inverse matrix $\T_{S,r}$ is given by
	\begin{equation}\label{inversesin}\T_{S,r}= \left\langle \cosh^{-r}(\sinh^{\langle -1\rangle}(x)),\sinh^{\langle -1\rangle}(x) \right\rangle=\left\langle (1+x^2)^{-r/2},\sinh^{\langle -1\rangle}(x) \right\rangle.\end{equation}
	For $r=2$, the first few rows and columns of $\M_{S,2}$ are
	$$ \M_{S,2}=\left\langle (\cosh(x))^2,\sinh(x) \right\rangle=	
	\left(
	\begin{array}{ccccccccc}
	1 & 0 & 0 & 0 & 0 & 0 & 0 & 0 & 0 \\
	0 & 1 & 0 & 0 & 0 & 0 & 0 & 0 & 0 \\
	2 & 0 & 1 & 0 & 0 & 0 & 0 & 0 & 0 \\
	0 & 7 & 0 & 1 & 0 & 0 & 0 & 0 & 0 \\
	8 & 0 & 16 & 0 & 1 & 0 & 0 & 0 & 0 \\
	0 & 61 & 0 & 30 & 0 & 1 & 0 & 0 & 0 \\
	32 & 0 & 256 & 0 & 50 & 0 & 1 & 0 & 0 \\
	0 & 547 & 0 & 791 & 0 & 77 & 0 & 1 & 0 \\
	128 & 0 & 4096 & 0 & 2016 & 0 & 112 & 0 & 1 \\
	\end{array}
	\right).$$
	We will see (Theorem \ref{inversecombina} below) that the $n$th row of the inverse matrix
	\begin{align*}
	\T_{S,2}&=\left\langle (1+x^2)^{-1},\sinh^{\langle -1\rangle}(x) \right\rangle\\
	&=\left(
	\begin{array}{ccccccccc}
	1 & 0 & 0 & 0 & 0 & 0 & 0 & 0 & 0 \\
	0 & 1 & 0 & 0 & 0 & 0 & 0 & 0 & 0 \\
	-2 & 0 & 1 & 0 & 0 & 0 & 0 & 0 & 0 \\
	0 & -7 & 0 & 1 & 0 & 0 & 0 & 0 & 0 \\
	24 & 0 & -16 & 0 & 1 & 0 & 0 & 0 & 0 \\
	0 & 149 & 0 & -30 & 0 & 1 & 0 & 0 & 0 \\
	-720 & 0 & 544 & 0 & -50 & 0 & 1 & 0 & 0 \\
	0 & -6483 & 0 & 1519 & 0 & -77 & 0 & 1 & 0 \\
	40320 & 0 & -32768 & 0 & 3584 & 0 & -112 & 0 & 1 \\
	\end{array}
	\right)
\end{align*}
	 encodes the M\"obius function of the poset $\Pi_{S,2}(n)$. We have that
	$$\T_{S,2}(n,k)=\sum_{(\mathbf{V},\pi)\in \Pi_{S,2}(n,k)}\mu\, (\widehat{0},(\mathbf{V},\pi)).$$
	
	 For example, its $4$th row is $(24,0,-16,0,1)$. Since $\widehat{0}=(\emptyset,\emptyset)||1|2|3|4$, we have
	 $$1=\T_{S,2}(4,4)=\mu(\widehat{0},\widehat{0}),\quad  -16=\T_{S,2}(4,2)=\sum_{(\mathbf{V},\pi)\in \Pi_{S,2}(4,2)}\mu\, (\widehat{0},(\mathbf{V},\pi)),$$ and $$ 24=\T_{S,2}(4,0)=\sum_{(\mathbf{V},\emptyset)\in \Pi_{S,2}(4,0)}\mu\, (\widehat{0},(\mathbf{V},\pi)).$$
	 We  manually check this. The elements of $\Pi_{S,2}(4,2)$ are of two kinds:
 \begin{enumerate}
 	\item\label{k1} Those where $\pi$ has two singletons. They are of the form $(\{a_1,a_2\},\emptyset)||a_3|a_4$, or $(\emptyset,\{a_1,a_2\})||a_3|a_4.$
 	\item\label{k2} Those where $\pi$ has a block of size $3$ and a singleton block; i.e., elements of the form $(\emptyset,\emptyset)||a_1\, a_2\, a_3|a_4.$
 \end{enumerate}
There are $2\times\binom{4}{2}=12$ elements of type (\ref{k1}) and $4$ elements of type (\ref{k2}). Each of them covers $\widehat{0}$, the M\"obius function of each of them is equal to $-1$, and their sum equals $-16$.

The elements of  $\Pi_{S,2}(4,0)$ are also of two kinds:
\begin{enumerate}
	\item\label{p1} Those where each component of the composition has two elements. They have the form $(\{a_1,a_2\},\{a_3,a_4\})||\emptyset$.
	\item\label{p2} Those where one of the components has the whole set $\{1,2,3,4\}$ and the other the emptyset. There are only two elements of this kind.
\end{enumerate}
There are $6=\binom{4}{2}$ elements of type (\ref{p1}). Each of them covers exactly two elements, since $(\{a_1,a_2\},\{a_3,a_4\})||\emptyset$ covers $(\{a_1,a_2\},\emptyset)||a_3|a_4$ and $(\emptyset,\{a_3,a_4\})||a_1|a_2$. Hence, each has M\"obius function equal to $1$. Their contribution to the sum is then $6$. The element $(\{1,2,3,4\},\emptyset)||\emptyset$ covers all the elements of the form $(\{a_1,a_2\},\emptyset)||a_3|a_4$, and all of the form $(\emptyset,\emptyset)||a_1, a_2, a_3|a_4$. The number of all of them is $6+4=10$. Then, its M\"obius function is equal to $9$. Then the contribution of the elements of type (\ref{p2}) is $18$. The sum of the M\"obius function on elements of both types is equal to $24$, as expected.
\end{example}

Now we turn our attention to the ordered case, to an analogue definition of a partial order on the set  $\mathbf{P}_{S,r}(n)$.
We denote the concatenation of linear orders (cycles) by the $+$ symbol. The result of concatenation of two cycles is also a cycle. For example, $(1\,4\,3)+(8\,10\,9)+(2\,5\,7\,6)=(1\,4\,3\,8\,10\,9\,2\,5\,7\,6)$.

We define two operations on the set $\mathbf{P}_{S,r}(n)$. Let $(\boldsymbol{\ell},\sigma)\in \mathbf{P}_{S,r}(n)$ be given. The element $(\boldsymbol{\ell}',\sigma')$ is obtained the following ways.
\begin{enumerate}
\item\label{operationt1} The ordered compositions remain unchanged $\boldsymbol{\ell}=\boldsymbol{\ell}'$, and $\sigma'$ is obtained from $\sigma$ by concatenating $s$ cycles of $\sigma$, for some $s\in S$ in any order.
	\item\label{operationt2} The components of the ordered compositions remain unchanged, except one, say $\ell'_j$; which is obtained by concatenation of the corresponding component $\ell_j$  with $s$ cycles of $\sigma$ (in any order), $s$ being an element of $S'$. More formally,
		\begin{eqnarray*}
			\ell'_i&=&\ell_i,\mbox{ for $i\neq j$, and } \ell'_j=\ell_j+c_{j_1}+c_{j_2}+\dots+c_{j_s},\; s\in S'; \mbox{and}\\
			\sigma'&=&\sigma-\{(c_{j_1}),(c_{j_2}),\dots, (c_{j_s})\}.
		\end{eqnarray*}
\end{enumerate}

\begin{example}
Let $S=\Odd$  be the set of odd integers, and $(5\,3,7\,9)||(1\,4\,6)(2\,8\,10)(11)$ an element of the set $\mathbb{P}_{\Odd,2}(11)$. By applying operation \ref{operationt1} we obtain the pair $(5\,3,7\,9)||(1\,4\,6\,11\,2\,8\,10)$, since $(\textcolor{blue}{1\,4\,6}\,\textcolor{green}{11}\,\textcolor{red}{2\,8\,10})=(1\,4\,6)+(11)+(2\,8\,10)$ is a sum of $3$ cycles ($3\in S$). On the other hand, by operation \ref{operationt2} we obtain the pair $(5\,3,7\,9\,\,11\,1\,4\,6)||(2\,8\,10)$, since  $\textcolor{blue}{7\,9}\,\,\textcolor{green}{11}\,\textcolor{red}{1\,4\,6}=7\,9+11+1\,4\,6$ is a sum of a linear order and two cycles ($2\in S'$).

\end{example}
\begin{definition}\label{def.orderstir2}
Let $(\boldsymbol{\ell},\sigma)$ and $(\boldsymbol{\ell}',\sigma')$ be two elements of $\mathbf{P}_{S,r}(n)$. We say that $(\boldsymbol{\ell},\sigma)\leq (\boldsymbol{\ell}',\sigma')$ if $(\boldsymbol{\ell}',\sigma')$ is obtained from $(\boldsymbol{\ell},\sigma)$ by any combination of the two above operations (\ref{operationt1}) and (\ref{operationt2}).
\end{definition}

The poset $\mathbf{P}_{S,r}(n)$ has a least element $\widehat{0}=(\emptyset,\emptyset,\dots,\emptyset)||(1)(2)\dots (n)$. The maximal elements are of the form $\boldsymbol{\ell}||\emptyset$, $\boldsymbol{\ell}$ being an ordered composition over $[n]$ and $\emptyset$ being the empty permutation.

The posets $\Pi_{S,r}(n)$, $\mathbf{P}_{S,r}(n)$ can be defined equivalently as follows. The equivalence with Definitions \ref{def.orderst1} and \ref{def.orderstir2} is easy to verify.

\begin{definition} \label{def.orders1}
Let $(\mathbf{V},\pi)$  and $(\mathbf{V}',\pi')$ be two elements of $\Pi_{S,r}(n)$. We have that $(\mathbf{V},\pi)\leq (\mathbf{V}',\pi')$ if
\begin{enumerate}
	\item every component of $\mathbf{V}'$ is obtained as the union of the corresponding component of $\mathbf{V}$ with $t$ blocks of $\pi$, $t\in S'$; and
	\item every block of $\pi'$ is obtained as a union of $s$ blocks of $\pi$, $s\in S$.
\end{enumerate}
\end{definition}
\begin{definition}\label{def.orders2}
Let $(\boldsymbol{\ell},\sigma)$ and $(\boldsymbol{\ell}',\sigma')$ be two elements of $\mathbf{P}_{S,r}(n)$.
	\noindent We have that $(\boldsymbol{\ell},\sigma)\leq(\boldsymbol{\ell}',\sigma')$ if
	\begin{enumerate}
	\item every component of $\boldsymbol{\ell}'$ is obtained as the concatenation of the corresponding component of $\boldsymbol{\ell}$ with $t$ cycles of $\sigma$ (in any order), $t\in S'$; and
	 \item every cycle in $\sigma'$ is the concatenation of $s$ cycles of $\sigma$, $s\in S$.
	\end{enumerate}
\end{definition}

\subsection{Combinatorial Interpretation}
For the proof of our main theorem of this section we will need the following lemma.
\begin{lemma}\label{coideal}Let $S$ be a $^+ 1$-monoid. Let us consider
	 $(\mathbf{V},\pi)$ and $(\boldsymbol{\ell},\sigma)$ elements of $\Pi_{S,r}(n)$ and $\mathbf{P}_{S,r}(n)$,  respectively. Let $k\leq n$ be the number of blocks of $\pi$, and assume that the number of cycles of $\sigma$ is also equal to $k$. Then, we have
	 \begin{eqnarray}\label{eq.coideal1}
	 |\{(\mathbf{V}', \pi'):(\mathbf{V}',\pi')\geq (\mathbf{V},\pi),\; |\pi'|=j\}|&=&|\Pi_{S,r}(k,j)|,\\\label{eq.coideal2}|\{(\boldsymbol{\ell}', \sigma'):(\boldsymbol{\ell}',\sigma')\geq (\boldsymbol{\ell},\sigma),\; |\sigma'|=j\}|&=&|\mathbf{P}_{S,r}(k,j)|.
	 \end{eqnarray}\end{lemma}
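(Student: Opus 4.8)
The plan is to prove both identities \eqref{eq.coideal1} and \eqref{eq.coideal2} by exhibiting explicit bijections, taking as ground set the $k$ blocks of $\pi$ (respectively the $k$ cycles of $\sigma$). The guiding principle is that, once $(\mathbf{V},\pi)$ is fixed, moving upward in the poset only recombines the $k$ blocks of $\pi$: it absorbs some of them into the composition components and groups the remaining ones into new partition blocks, never splitting a block. Thus the blocks behave exactly like $k$ abstract points on which a fresh composition-partition pair is assembled, and the principal filter above $(\mathbf{V},\pi)$ should be indexed by $\Pi_{S,r}(k)$.

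For \eqref{eq.coideal1} I would label the blocks of $\pi$ as $\Block_1,\dots,\Block_k$ and regard $\{\Block_1,\dots,\Block_k\}$ as a $k$-element ground set. Given $(\mathbf{V}',\pi')\geq(\mathbf{V},\pi)$ with $|\pi'|=j$, Definition \ref{def.orders1} guarantees that each $V'_i$ equals $V_i$ together with a set $W_i$ of some $t_i\in S'$ of the blocks of $\pi$, and that each block of $\pi'$ is a union of $s\in S$ of the blocks of $\pi$; since these blocks partition the complement $U-V$ of the $n$-element ground set, every block is used exactly once. I then send $(\mathbf{V}',\pi')$ to the pair $(\mathbf{W},\rho)$ over $\{\Block_1,\dots,\Block_k\}$, where $\mathbf{W}=(W_1,\dots,W_r)$ and $\rho$ is the partition of the leftover blocks into the groups merged to form $\pi'$. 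By construction $|W_i|\in S'$, each group of $\rho$ has size in $S$, and $|\rho|=j$, so $(\mathbf{W},\rho)\in\Pi_{S,r}(k,j)$. The inverse map reattaches, setting $V'_i:=V_i\cup\bigcup_{\Block\in W_i}\Block$ and replacing each group of $\rho$ by the union of its blocks. The only place the hypothesis that $S$ is a $^+1$-monoid is used is to check that the reconstructed pair is again legal: Propositions \ref{moduleS} and \ref{monoid} give $|V'_i|=|V_i|+\sum_{\Block\in W_i}|\Block|\in S'$ and $\sum_{\Block\in G}|\Block|\in S$ for each group $G$. Since $(\mathbf{V},\pi)$ is held fixed, the two maps are mutually inverse, proving \eqref{eq.coideal1}.

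For \eqref{eq.coideal2} I would run the same scheme with the $k$ cycles of $\sigma$ as ground set, reading Definition \ref{def.orders2} in place of Definition \ref{def.orders1}. The new feature is that the recombination now carries ordering data: a component $\ell_i$ is extended by concatenating its assigned cycles in a chosen \emph{linear} order, while a new cycle of $\sigma'$ is a concatenation of its constituent cycles in a chosen \emph{cyclic} order. The key observation to record first is that a set of $s$ cycles, each read in canonical least-element-first form, concatenates to exactly $(s-1)!$ distinct cycles, and these correspond bijectively to the $(s-1)!$ cyclic arrangements of the $s$ cycles, the arrangement being recoverable because each constituent cycle occurs as a contiguous canonical block of the output; dually, appending $t$ cycles to a fixed $\ell_i$ yields $t!$ outcomes, one per linear order. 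This is precisely the data of an ordered composition-permutation pair $(\boldsymbol{m},\tau)$ over the $k$ cycles: the linear order $\boldsymbol{m}$ records the concatenation order into each component, and each cycle of $\tau$ (a cyclic arrangement of $s\in S$ of the $\sigma$-cycles) records a new cycle of $\sigma'$. Hence $(\boldsymbol{\ell}',\sigma')\mapsto(\boldsymbol{m},\tau)$ is the desired bijection onto $\mathbf{P}_{S,r}(k,j)$, with closure again supplied by Propositions \ref{moduleS} and \ref{monoid}.

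I expect the main obstacle to be the cyclic bookkeeping in \eqref{eq.coideal2}: one must verify carefully that concatenating $s$ cycles realizes each of the $(s-1)!$ cyclic arrangements exactly once, with no accidental coincidences among the resulting cycles, so that the size-$s$ cycles of $\tau$ correspond to new $\sigma'$-cycles bijectively rather than merely surjectively. By contrast, the partition identity \eqref{eq.coideal1} is essentially formal once the ground-set viewpoint is adopted. I would therefore isolate the cyclic counting as a short preliminary observation and then present the two cases in parallel, so that \eqref{eq.coideal2} reads as the decorated version of \eqref{eq.coideal1}.
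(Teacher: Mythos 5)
Your proposal is correct and follows essentially the same route as the paper: the paper's proof constructs exactly this bijection, sending $(\mathbf{V}',\pi')$ to the pair $(\mathbf{W},\kappa)$ over the $k$ ordered blocks of $\pi$ (and, in the ordered case, sending $(\boldsymbol{\ell}',\sigma')$ to an ordered composition--permutation pair over the $k$ cycles of $\sigma$, with the least constituent cycle placed first to guarantee well-definedness). Your extra remark isolating the $(s-1)!$ cyclic-arrangement count is a sound way to make the injectivity in the ordered case explicit, but it is the same argument.
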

	 \begin{proof}For each $(\mathbf{V}',\pi')\geq (\mathbf{V},\pi)$  with $j=|\pi'|$, we are going to construct a unique element $(\mathbf{W},\kappa)\in\Pi_{S,r}(k,j)$. First we order the elements of $\pi$, $\pi=\{\Block_1,\Block_2,\dots, \Block_k\}$.
	 	By part $(1)$ of Definition \ref{def.orders1},  each $V_i'$   is of the form
	 	\begin{equation}\label{eq.recover1}V_i'=V_i\cup\bigcup_{h\in W_i}\Block_h\end{equation}
	 	for some subset $W_i$ of $[k]$
	 	satisfying $|W_i|\in S'$ ($W_i$ might be empty, $0\in S'$).
By part (2) of Definition \ref{def.orders1}, for each block $\Block$ of $\pi'$, there exists a subset $K_B$  of $[k]$, such that
	 \begin{equation}\label{eq.recover2}\Block=\bigcup_{j\in K_B} \Block_j, \end{equation}
	\noindent where $|K_B|\in S$.
	 Let $\mathbf{W}=(W_1,W_2,\dots,W_r)$ and $\kappa=\{K_B|\Block \in\pi'\}$.	Define the correspondence
	 	$$(\mathbf{V}',\pi')\stackrel{\phi}{\mapsto} (\mathbf{W},\kappa).$$
	 	Clearly $|\kappa|=j$, and hence $(\mathbf{W},\kappa)\in \Pi_{S,r}(k,j)$. It is easy to check that $\phi$ is a bijection.  Given $(\mathbf{V},\pi)$ and $(\mathbf{W},\kappa)$, we recover $(\mathbf{V}',\pi')$ by Equations (\ref{eq.recover1}) and (\ref{eq.recover2}).
\begin{example}
For $r=2$ and $S$ the set of odd integers we have that
	 	\begin{multline*}
		(V_1,V_2)||\{\Block_1,\Block_2,\Block_3,\Block_4,\Block_5,\Block_6,\Block_7,\Block_8,\Block_9\}\\
		\leq (V_1\cup \Block_1\cup \Block_3,V_2\cup \Block_2\cup \Block_5)||\{\Block_4 \cup \Block_6\cup \Block_7, \Block_8, \Block_9\}
		\end{multline*}
	 	whatever the blocks of the partitions (of odd size) and the elements of the  composition (of even size) are. The bijection $\phi$ acts as follows
	 	$$(V_1\cup \Block_1\cup \Block_3,V_2\cup \Block_2\cup \Block_5)||\{\Block_4 \cup \Block_6\cup \Block_7, \Block_8, \Block_9\}\stackrel{\phi}{\mapsto} (\{1,3\},\{2,5\})||4 6 7|8|9.$$
\end{example}
	Let $\sigma=(c_1)(c_2)\dots (c_k)$, the cycles ordered in such way that the minimum element of $c_i$ is less than the minimum of $c_{i+1}$, $i=1,2,\dots,k-1$. Assume that  $(\boldsymbol{\ell}',\sigma')\geq (\boldsymbol{\ell},\sigma)$ and the number of cycles of $\sigma'$ is $j$. By Definition \ref{def.orders2} part $(1)$, for every $i=1,2,\dots,r$, there exists a linear order $\widehat{\ell}_i$ of a subset of $[k]$ (that might be empty), $|\widehat{\ell}_i|\in S'$,  such that
	 \begin{equation}\label{eq.reverse1}\ell_i'=\ell_i+\sum_{h\in\widehat{\ell}_i}(c_h).\end{equation}  The concatenation in the sum is made following the order of $\widehat{\ell}_i$.
	  By Definition \ref{def.orders2}, part (2), for every cycle $c\in \sigma'$ there exists a cycle $\gamma_{(c)}$ on some subset of $[k]$, $|\,\gamma_{(c)}\,|\in S$, such that
	 \begin{equation}\label{eq.reverse2}(c)=\sum_{h\in \gamma_{(c)}}(c_h).\end{equation}
	 The concatenation in the sum is made following the order in $\gamma_{(c)}$, so the least cycle in $\{(c_h)|h\in\gamma_{(c)}\}$ goes first.  That guarantees that $(c)$ is a cycle.
	 	Let $\widehat{\boldsymbol{\ell}}=(\widehat{\ell}_1,\widehat{\ell}_2,\dots,\widehat{\ell}_r)$ and $\widehat{\sigma}$ be the permutation whose cycles are of the form $\gamma_{(c)}$, $(c)\in \sigma'.$ It is clear that $(\widehat{\boldsymbol{\ell}},\widehat{\sigma})$ is in $\mathbf{P}_{S,r}(k,j)$. The correspondence
	 	$$(\boldsymbol{\ell}',\sigma')\stackrel{\psi}{\mapsto} (\widehat{\boldsymbol{\ell}},\widehat{\sigma})$$
	 	is the desired bijection. Given $(\boldsymbol{\ell},\sigma)$ and $(\widehat{\boldsymbol{\ell}},\widehat{\sigma})$  we can recover $(\boldsymbol{\ell}',\sigma')$ by Equations (\ref{eq.reverse1}) and (\ref{eq.reverse2}).
	 \end{proof}
\begin{example} Let $S$ be the set of odd integers and $r=2$, we have that $$(\ell_1,\ell_2)||(c_1)(c_2)(c_3)(c_4)(c_5)(c_6)(c_7)(c_8)(c_9)\leq
(\ell_1+c_2c_1c_9c_3,\ell_2+c_5c_4)||(c_6 c_7c_8).$$
whatever the cycles (of odd size) and the linear orders (of even size) are. The bijection $\psi$ acts as follows
$$(\ell_1+c_2c_1c_9c_3,\ell_2+c_5c_4)||(c_6 c_7 c_8)\stackrel{\psi}{\mapsto} (2\, 1\, 9\,3, 5\, 4)||(6\,7\,8).$$

\end{example}
\begin{theorem}\label{inversecombina}Let $S$ be a $^+1$-monoid. Then, the M\"obius function of the posets $\Pi_{S,r}(n)$ and $\mathbf{P}_{S,r}(n)$, $n\in\N$, give us respectively the matrices $\T_{S,r}$ and $\U_{S,r}$,
	\begin{eqnarray}\label{eq.invmatrix1}
	\T_{S,r}(n,k)&=&\sum_{(\mathbf{V},\pi)\in \Pi_{S,r}(n,k)}\mu(\widehat{0},(\mathbf{V},\pi)),\\\label{eq.invmatrix2}
	\U_{S,r}(n,k)&=&\sum_{(\boldsymbol{\ell},\sigma)\in \mathbf{P}_{S,r}(n,k)}\mu(\widehat{0},(\boldsymbol{\ell},\sigma)).
	\end{eqnarray}
	
\end{theorem}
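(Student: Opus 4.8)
The plan is to establish both formulas \eqref{eq.invmatrix1} and \eqref{eq.invmatrix2} simultaneously, since the two posets $\Pi_{S,r}(n)$ and $\mathbf{P}_{S,r}(n)$ have parallel structure and Lemma \ref{coideal} treats them in tandem. I will focus the exposition on the second-kind case \eqref{eq.invmatrix1}; the first-kind case \eqref{eq.invmatrix2} follows \emph{mutatis mutandis} by replacing the bijection $\phi$ with $\psi$ throughout. The key idea is to read off the defining relation $\sum_{i=k}^n {n\brace i}_{S,r}\,\T_{S,r}(i,k)=\delta_{n,k}$ from the matrix identity $\M_{S,r}\T_{S,r}=\I$ (recorded earlier via the orthogonality relation), and to match it term-by-term against the defining recurrence for the M\"obius function.

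First I would fix $n$ and define, for each composition-partition pair $(\mathbf{V},\pi)\in\Pi_{S,r}(n)$, the quantity $m(\mathbf{V},\pi):=\mu(\widehat{0},(\mathbf{V},\pi))$, together with the column-indexed sums $F(n,k):=\sum_{(\mathbf{V},\pi)\in\Pi_{S,r}(n,k)}m(\mathbf{V},\pi)$, which is the right-hand side of \eqref{eq.invmatrix1}. The goal is to show $F(n,k)=\T_{S,r}(n,k)$, and since $\T_{S,r}$ is the unique matrix satisfying $\sum_{i}{n\brace i}_{S,r}F(i,k)=\delta_{n,k}$, it suffices to verify that $F$ satisfies this same orthogonality relation. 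The essential tool is Lemma \ref{coideal}: for any fixed $(\mathbf{V},\pi)$ with $|\pi|=k$, the principal filter above it, intersected with the level set $|\pi'|=j$, is in bijection (via $\phi$) with the entire set $\Pi_{S,r}(k,j)$, and hence has cardinality $|\Pi_{S,r}(k,j)|={k\brace j}_{S,r}$ by Proposition \ref{stirl2}.

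The central computation then proceeds as follows. By the defining property of the M\"obius function, $\sum_{(\mathbf{V}',\pi')\geq(\mathbf{V},\pi)}\mu(\widehat{0},(\mathbf{V}',\pi'))$ equals $1$ when $(\mathbf{V},\pi)=\widehat{0}$ and $0$ otherwise. I would instead compute a summed version: fix $k$, sum $F(n,j)$ against ${n\brace \cdot}$ in the right way. Concretely, starting from $\widehat 0$ and using that $\widehat{0}$ lies at level $|\pi|=n$ (all singletons), I would write
\begin{align*}
\sum_{j=k}^{n}{n\brace j}_{S,r}\,F(j,k)
&=\sum_{j=k}^{n}\,\sum_{\substack{(\mathbf{V}',\pi')\in\Pi_{S,r}(n,j)}}|\Pi_{S,r}(\,\cdot\,)|\cdots
\end{align*}
and recognize, via Lemma \ref{coideal} applied at the bottom element $\widehat{0}$ (where the number of blocks is $n$), that the weight ${n\brace j}_{S,r}$ counts exactly the pairs at level $j$ in the filter above $\widehat{0}$. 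Grouping the double sum by the pair $(\mathbf{V}',\pi')$ and interchanging the order of summation collapses it, through the defining recursion $\sum_{\widehat 0\le(\mathbf V',\pi')\le(\mathbf V'',\pi'')}\mu(\widehat 0,(\mathbf V',\pi'))=\delta$, to $\delta_{n,k}$. This identifies $F$ as the inverse of $\M_{S,r}$, i.e. $F(n,k)=\T_{S,r}(n,k)$.

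The main obstacle I anticipate is bookkeeping the interval structure correctly: Lemma \ref{coideal} gives the size of a filter intersected with a level set, but the M\"obius inversion requires summing $\mu$ over \emph{intervals} $[\widehat 0,(\mathbf V',\pi')]$, and I must confirm that each such interval is itself isomorphic to a poset of the same type $\Pi_{S,r}(k,\cdot)$ so that the counts ${k\brace j}_{S,r}$ appear with the right multiplicities. This requires checking that the bijection $\phi$ is not merely a bijection of sets but an order-isomorphism of the filter above $(\mathbf V,\pi)$ onto $\Pi_{S,r}(k)$ — a point the lemma's proof strongly suggests (the recovery formulas \eqref{eq.recover1}–\eqref{eq.recover2} respect the covering operations) but which must be made explicit so that M\"obius values transfer correctly. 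Once that order-isomorphism is in hand, the two orthogonality relations match coefficient-by-coefficient and both \eqref{eq.invmatrix1} and \eqref{eq.invmatrix2} follow by the uniqueness of the inverse Riordan matrix.
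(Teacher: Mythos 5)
Your overall strategy --- identify the candidate matrix, verify an orthogonality relation against $\M_{S,r}$, and collapse a double sum via the defining recursion of the M\"obius function using Lemma \ref{coideal} --- is the same skeleton as the paper's proof. But you verify the product in the opposite order, and that choice creates real extra work. You aim at $\sum_{j}{n\brace j}_{S,r}F(j,k)=\delta_{n,k}$, i.e.\ $\M_{S,r}F=\I$. Here $F(j,k)$ is a sum of values $\mu(\widehat{0}_j,\cdot)$ computed in the \emph{small} poset $\Pi_{S,r}(j)$, so to re-express the double sum inside the big poset $\Pi_{S,r}(n)$ you must identify $F(j,k)$ with $\sum_{(\mathbf{V}'',\pi'')\geq(\mathbf{V}',\pi'),\,|\pi''|=k}\mu\bigl((\mathbf{V}',\pi'),(\mathbf{V}'',\pi'')\bigr)$ for each level-$j$ element $(\mathbf{V}',\pi')$. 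That requires the bijection $\phi$ of Lemma \ref{coideal} to be an order-isomorphism of the principal filter above $(\mathbf{V}',\pi')$ onto $\Pi_{S,r}(j)$, so that M\"obius values transfer. You correctly flag this as the main obstacle, but you do not prove it, and Lemma \ref{coideal} as stated gives only a bijection of level sets, not an order-isomorphism; so as written your argument has an unfilled step. (The claim is very likely true --- the recovery formulas respect the covering operations --- but it must be checked.)

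The paper avoids this entirely by proving the other relation, $\sum_{k}\T_{S,r}(n,k)\,|\Pi_{S,r}(k,j)|=\delta_{n,j}$. Starting from $\sum_{\widehat{0}\leq(\mathbf{V},\pi)\leq(\mathbf{V}',\pi')}\mu(\widehat{0},(\mathbf{V},\pi))=\delta_{n,j}$ for each $(\mathbf{V}',\pi')\in\Pi_{S,r}(n,j)$, summing over $(\mathbf{V}',\pi')$ and interchanging, the inner count becomes the \emph{cardinality} of a filter level set, which is exactly what Lemma \ref{coideal} supplies; every M\"obius value stays anchored at $\widehat{0}$ in the single poset $\Pi_{S,r}(n)$, so no transfer between posets is needed. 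Since both matrices are unipotent lower triangular, a one-sided inverse is the inverse, so either orthogonality relation suffices --- but the paper's ordering needs only the counting form of the lemma. To complete your version, either supply the order-isomorphism explicitly or switch to the paper's ordering of the product.
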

\begin{proof}
We begin by defining the M\"obius cardinal of the sets $\Pi_{S,r}(n,k)$ and $\mathbf{P}_{S,r}(n,k)$, \begin{eqnarray*}|\Pi_{S,r}(n,k)|_{\mu}&:=&\sum_{(\mathbf{V},\pi)\in \Pi_{S,r}(n,k)}\mu(\widehat{0},(\mathbf{V},\pi)),\\
|\mathbf{P}_{S,r}(n,k)|_{\mu} &:=&\sum_{(\boldsymbol{\ell},\sigma)\in \mathbf{P}_{S,r}(n,k)}\mu(\widehat{0},(\boldsymbol{\ell},\sigma)).\end{eqnarray*}
In order to prove Equation (\ref{eq.invmatrix1}), since $\M_{S,r}=|\Pi_{S,r}(n,k)|$, it is enough to prove that for every $j \leq  n$,\; $n,j\in \N$,
\begin{equation}\label{eq.duality1}\sum_{k=0 }^n|\Pi_{S,r}(n,k)|_{\mu}\;|\Pi_{S,r}(k,j)|=\delta_{n,j}.
\end{equation}
Similarly, Equation (\ref{eq.invmatrix2}) is equivalent to
\begin{equation}\label{eq.duality2}\sum_{k=0 }^n|\mathbf{P}_{S,r}(n,k)|_{\mu}\;|\mathbf{P}_{S,r}(k,j)|=\delta_{n,j}.
\end{equation}
Let $(\mathbf{V}',\pi')$ be an element of $\Pi_{S,r}(n,j)$. By properties of the M\"obius function we have that
\begin{equation*}
\sum_{\widehat{0}\leq (\mathbf{V},\pi)\leq (\mathbf{V}',\pi')}\mu(\widehat{0},(\mathbf{V},\pi))=\delta(\widehat{0},(\mathbf{V}',\pi'))=\delta_{n,j}.
\end{equation*}
Summing over all the elements of $\Pi_{S,r}(n,j)$, interchanging sums and classifying by the size of $\pi$, we get
\begin{eqnarray*}
\delta_{n,j}&=&\sum_{(\mathbf{V}',\pi')\in\Pi_{S,r}(n,j)}\sum_{\widehat{0}\leq (\mathbf{V},\pi)\leq (\mathbf{V}',\pi')}\mu(\widehat{0},(\mathbf{V},\pi))\\&=&\sum_{k=0}^n\sum_{(\mathbf{V},\pi)\in\Pi_{S,r}(n,k)}\left(\sum_{(\mathbf{V}',\pi')\geq(\mathbf{V},\pi)}\mu(\widehat{0},(\mathbf{V},\pi))\right)\\&=&\sum_{k=0}^n\sum_{(\mathbf{V},\pi)\in\Pi_{S,r}(n,k)}\mu(\widehat{0},(\mathbf{V},\pi))\left(\sum_{(\mathbf{V}',\pi')\geq(\mathbf{V},\pi)}1\right)\\&=&\sum_{k=0}^n\sum_{(\mathbf{V},\pi)\in\Pi_{S,r}(n,k)}\mu(\widehat{0},(\mathbf{V},\pi))|\{(\mathbf{V}',\pi')|(\mathbf{V}',\pi')\geq(\mathbf{V},\pi)\}|.
\end{eqnarray*}
From this, by Equation (\ref{eq.coideal1}), Lemma \ref{coideal}, we obtain Equation (\ref{eq.duality1}).  Equation (\ref{eq.duality2}) can be proven in a similar manner.
\end{proof}

\begin{example}
	Let $S$ be, as in Example \ref{stirling2inverse}, the $^+ 1$-monoid of odd integers. It is not difficult to check that $$\La_{S,r}=\left\langle (1-x^2)^{-r},\ln\left(\frac{1+x}{1-x}\right)^{\frac{1}{2}}\right\rangle.$$ Since $\ln\left(\frac{1+x}{1-x}\right)^{\frac{1}{2}}$ is the hyperbolic arctangent, we have $$\U_{S,r}=\left\langle \cosh^{-2r}(x),\tanh(x)\right\rangle.$$
	For $r=1$ we have
	\begin{align*}
	\U_{S,1}&=\left\langle \cosh^{-2}(x),\tanh(x)\right\rangle\\
	&=\left(
	\begin{array}{ccccccccc}
	1 & 0 & 0 & 0 & 0 & 0 & 0 & 0 & 0 \\
	0 & 1 & 0 & 0 & 0 & 0 & 0 & 0 & 0 \\
	-2 & 0 & 1 & 0 & 0 & 0 & 0 & 0 & 0 \\
	0 & -8 & 0 & 1 & 0 & 0 & 0 & 0 & 0 \\
	16 & 0 & -20 & 0 & 1 & 0 & 0 & 0 & 0 \\
	0 & 136 & 0 & -40 & 0 & 1 & 0 & 0 & 0 \\
	-272 & 0 & 616 & 0 & -70 & 0 & 1 & 0 & 0 \\
	0 & -3968 & 0 & 2016 & 0 & -112 & 0 & 1 & 0 \\
	7936 & 0 & -28160 & 0 & 5376 & 0 & -168 & 0 & 1 \\
	\end{array}
	\right).
	\end{align*}
	The M\"obius functions of the posets in this case have  very interesting combinatorial interpretations. The absolute values of the first row gives us the number of ``Zag" permutations (or tangent numbers), $z_{2n+1}$ (Sequence A000182 in \cite{OEIS}). The second one gives us the number of cyclically (reverse) alternating permutations $c_{2n+1}$ of order $2n+1$ (Sequence A024283 in \cite{OEIS}). By Theorem \ref{inversecombina} we have
	\begin{eqnarray}
	z_{2n+1}&=&|\sum_{(\ell,\emptyset)\in\mathbf{P}_{S,1}(2n,0)}\mu(\widehat{0},(\boldsymbol{\ell},\emptyset))|,\\\label{cyclic}
	c_{2n+1}&=&|\sum_{(\boldsymbol{\ell},c)\in\mathbf{P}_{S,1}(2n-1,1)}\mu(\widehat{0},(\boldsymbol{\ell},(c)))|.
	\end{eqnarray}
The sum in Equation (\ref{cyclic}) is over  linear order-cyclic permutation pairs.
 For example, for $n=2$, the pairs are of two forms:
	  \begin{enumerate}
	  	\item $a_1\, a_2||(a_3)$
	  	\item $\emptyset||(a_1\, a_2\, a_3)$
	  \end{enumerate}
  Both kinds of pairs cover $\widehat{0}=\emptyset||(1)(2)(3)$. There are $6$ elements of type $(1)$ and $2$ of type $(2)$. Hence,
  $$\sum_{(\boldsymbol{\ell},(c))\in\mathbf{P}_{S,1}(3,1)}\mu(\widehat{0},(\boldsymbol{\ell},(c)))=-8.$$

  The number of cyclically (reverse) alternating permutations of size $5$ is $8$,
  \begin{equation*}
  \begin{tabular}{ c c c l }
  2\,4\,3\,5\,1& 3\,4\,2\,5\,1 & 3\,5\,2\,4\,1& 4\,5\,1\,3\,2\\
  2\,5\,3\,4\,1 & 3\,4\,1\,5\,2 & 3\,5\,1\,4\,2&4\,5\,2\,3\,1.
  \end{tabular}
  \end{equation*}

  \end{example}

\section{Some Graph Theoretical Connections}\label{sec5}
\subsection{Restricted Stirling numbers for graphs}
The Stirling numbers for graphs were introduced in \cite{Tomascu} as the number of partitions of $V(G)$ into $k$ independent subsets, i.e., there are no edges between any two vertices included in a subset. Motivated by its strong connection to the chromatic polynomial many authors investigated the properties of these sequences, see \cite{Galvin} for a brief history on these studies. Here we introduce a dual version in order to give a natural interpretation of the $(S,r)-$Stirling number of the second kind, ${n\brace k}_{S,r}$.





Let $G$ be a simple finite graph on $[n]$. We let ${G\brace k}^c$ denote the number of partitions of $V(G)$ into $k$ cliques, i.e.,  such that the induced graph on the vertices of each block is a clique. Let further $B(G)^c$ be the number of partitions of $V(G)$ into cliques. We call ${G\brace k}^c$ the \emph{dual Stirling number of the second kind for graphs} and $B(G)^c$ the \emph{dual Bell-number for graphs}. Clearly, ${G\brace k}^c$ is the dual of ${G \brace k}$ in the sense that ${G\brace k}^c={\overline{G}\brace k}$, where $\overline{G}$ is the complement of the graph $G$. Similarly, $B(G)^c$ is the dual of $B(\overline{G})$, the Bell-number of graphs defined for instance in \cite{Duncan}. Further, given a set of integers $S=\{s_1,s_2,\ldots, s_k\}$, we let ${G\brace k}^c_S$ denote the number of ways to partition $V(G)$ into the union of $k$ occurrences of $K_{s_i}$, where $s_i\in S$ and $K_s$ denotes the complete graph on $s$ vertices. For instance, if $S$ contains only the integer $2$, ${G\brace k}^c_S$ is the number of perfect matchings of the graph $G$. Similarly, we define $B_S(G)^c$ as the number of ways to partition $V(G)$ into cliques of  $K_{s_i}$, where $s_i\in S$. For instance, $B_S(P_n)^c$ with $S=\{1,2\}$, where $P_n$ denotes the path graph on $[n]$, is equal to  the Fibonacci number.

It is clear that if $G$ is the complete graph, ${G\brace k}^c_S$ is the $S$-restricted Stirling number ${n\brace k}_S$. Further, if $G$ is the join of the complete graph on $n$ vertices and the empty graph on $r$ vertices $K_n+E_r$, we have


$$
{K_n+E_r \brace k}^c_{S}={n\brace k}_{S,r}\quad \mbox{and}\quad B_S(K_n+E_r)^c=B_{n,S,r}.
$$
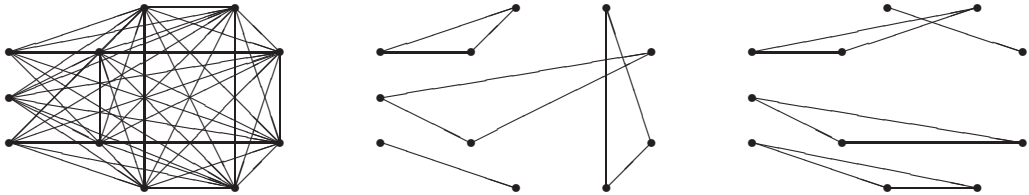
\begin{figure}[ht]
\setlength{\unitlength}{0.6cm}
\begin{picture}(8,6)
\put(1,2){\circle*{0.2}}\put(3,4){\circle*{0.2}}\put(4,1){\circle*{0.2}}
\put(1,3){\circle*{0.2}}\put(4,5){\circle*{0.2}}\put(6,1){\circle*{0.2}}
\put(1,4){\circle*{0.2}}\put(6,5){\circle*{0.2}}\put(7,2){\circle*{0.2}}
\put(3,2){\circle*{0.2}}\put(7,4){\circle*{0.2}}
\put(1,2){\line(3,-1){3}}\put(1,2){\line(1,0){2}}\put(1,2){\line(1,1){2}}\put(1,2){\line(1,1){3}}
\put(1,2){\line(5,3){5}}\put(1,2){\line(3,1){6}}\put(1,2){\line(1,0){6}}\put(1,2){\line(5,-1){5}}
\put(1,3){\line(3,-2){3}}\put(1,3){\line(2,-1){2}}\put(1,3){\line(2,1){2}}\put(1,3){\line(3,2){3}}
\put(1,3){\line(5,2){5}}\put(1,3){\line(6,1){6}}\put(1,3){\line(6,-1){6}}\put(1,3){\line(5,-2){5}}
\put(1,4){\line(1,-1){3}}\put(1,4){\line(1,0){6}}\put(1,4){\line(3,1){3}}\put(1,4){\line(5,1){5}}
\put(1,4){\line(3,-1){6}}\put(1,4){\line(5,-3){5}}
\put(4,1){\line(-1,3){1}}\put(4,1){\line(0,1){4}}\put(4,1){\line(1,2){2}}
\put(4,1){\line(1,1){3}}\put(4,1){\line(3,1){3}}\put(4,1){\line(1,0){2}}
\put(3,2){\line(0,1){2}}\put(3,2){\line(1,3){1}}\put(3,2){\line(1,1){3}}\put(3,2){\line(2,1){4}}
\put(3,2){\line(1,0){4}}\put(3,2){\line(3,-1){3}}
\put(3,4){\line(1,1){1}}\put(3,4){\line(3,1){3}}\put(3,4){\line(1,0){4}}\put(3,4){\line(2,-1){4}}
\put(3,4){\line(1,-1){3}}\put(4,5){\line(1,0){2}}\put(4,5){\line(3,-1){3}}\put(4,5){\line(1,-1){3}}
\put(4,5){\line(1,-2){2}}\put(6,5){\line(1,-1){1}}\put(6,5){\line(1,-3){1}}\put(6,5){\line(0,-1){4}}
\put(7,4){\line(0,-1){2}}\put(7,4){\line(-1,-3){1}}\put(7,2){\line(-1,-1){1}}
\end{picture}
\begin{picture}(8,6)
\put(1,2){\circle*{0.2}}\put(3,4){\circle*{0.2}}\put(4,1){\circle*{0.2}}
\put(1,3){\circle*{0.2}}\put(4,5){\circle*{0.2}}\put(6,1){\circle*{0.2}}
\put(1,4){\circle*{0.2}}\put(6,5){\circle*{0.2}}\put(7,2){\circle*{0.2}}
\put(3,2){\circle*{0.2}}\put(7,4){\circle*{0.2}}
\put(1,2){\line(3,-1){3}}\put(1,3){\line(2,-1){2}}\put(1,3){\line(6,1){6}}\put(3,2){\line(2,1){4}}
\put(1,4){\line(1,0){2}}\put(1,4){\line(3,1){3}}\put(3,4){\line(1,1){1}}
\put(6,1){\line(0,1){4}}\put(7,2){\line(-1,3){1}}\put(6,1){\line(1,1){1}}
\end{picture}
\begin{picture}(8,6)
\put(1,2){\circle*{0.2}}\put(3,4){\circle*{0.2}}\put(4,1){\circle*{0.2}}
\put(1,3){\circle*{0.2}}\put(4,5){\circle*{0.2}}\put(6,1){\circle*{0.2}}
\put(1,4){\circle*{0.2}}\put(6,5){\circle*{0.2}}\put(7,2){\circle*{0.2}}
\put(3,2){\circle*{0.2}}\put(7,4){\circle*{0.2}}
\put(1,2){\line(3,-1){3}}\put(1,2){\line(5,-1){5}}\put(4,1){\line(1,0){2}}
\put(1,3){\line(2,-1){2}}\put(1,3){\line(6,-1){6}}\put(3,2){\line(1,0){4}}
\put(1,4){\line(1,0){2}}\put(1,4){\line(5,1){5}}\put(3,4){\line(3,1){3}}
\put(4,5){\line(3,-1){3}}
\end{picture}
\caption{$K_8+E_3$ and two examples of a partition of $K_8+E_3$ into $k=4$ blocks with $S=\{2,3\}$.}
\end{figure}
\subsection{Acyclic orientations of the complete bipartite graph}

Let $G=(V,E)$ be a simple graph with vertex set $V$, $|V|=n$, and edge set $E$, $|E|=m$. An \emph{acyclic orientation} $\overrightarrow{G}$ of the undirected graph $G$ is an assignment of a direction to each edge of the graph such that there are no directed cycles.
Let $A(G)$ be the number of acyclic orientations of the graph $G$; it is an interesting graph parameter with unexpected connections to the chromatic polynomial of a graph.

Bipartite graphs are crucial in the theory of acyclic orientations, and interestingly $A(K_{n_1,n_2})$ leads to the natural appearance of Stirling numbers.
Let $K_{n_1,n_2}$ be the \emph{complete bipartite graph} on $n=n_1+n_2$ vertices. $K_{n_1,n_2}$ is the graph with vertex set $A\cup B$, where $A=\{u_1,\ldots, u_{n_1}\}$ and $B=\{v_1,\ldots, v_{n_2}\}$, and edge set $E=\{(u,v)| u\in A \mbox{ and } v\in B\}$. $A$ and $B$ are called the \emph{bipartite blocks}. It is known \cite{Cameron} that
$$
A(K_{n_1,n_2})=\Ber_{n_1}^{(-n_2)}=\sum_{m=0}^{\min{n_1,n_2}}(m!)^2{n_1+1\brace m+1}{n_2+1\brace m+1},
$$
where $B_{n_1}^{(-n_2)}$ is the poly-Bernoulli number of negative indices \cite{Kaneko}. (We refer to these numbers in a later section.)

Next, we present an example of a graph such that the number of acyclic orientations is given by a modified poly-Bernoulli number, involving ${n\brace k}_{S,r}$.
The \emph{degree} of a vertex $v$, $\mbox{deg}(v)$, is the number of edges adjacent to $v$. The vertices of the bipartite block $A$ have degree $|B|$ and the vertices of the $B$ all have degree $|A|$. Let $\mbox{deg}_{o}(v)$ denote the \emph{outdegree} of the vertex $v$, the number of edges $e$ whose starting vertex is $v$.

Let $S=\{s_1,\ldots, s_k\}$ be a set of positive integers. Let $A^*=\{a_1,a_2,\ldots, a_r\}$ and $B^*=\{b_1,b_2,\ldots, b_r\}$ be two $r-$sets of vertices. We let $K_{n_1+r,n_2+r}$ denote the complete bipartite graph with bipartite blocks $A\cup A^*$ and $B\cup B^*$.
Further, we let $\widehat{K}_{n_1+r,n_2+r}$ denote the complete bipartite graph on $\widehat{A}=A\cup A^*\cup\{\overline{u}\}$ and $\widehat{B}=B\cup B^*\cup \{\overline{v}\}$.
Consider the acyclic orientations of the $\widehat{K}_{n_1+r,n_2+r}$ with the following properties:
\begin{itemize}
\item[$S$]: $\forall v,w\in A\cup A^*$: $\mbox{deg}_{o}(v)-\mbox{deg}_{o}(w)\in S$, and  analogously for all $ v,w\in B\cup B^*$. This means that the outdegrees of two vertices in $A\cup A^*$ or $B\cup B^*$ differ only by a number contained in $S$.
\item[$r$]: if $u,v\in A^*$ then $\mbox{deg}_{o}(u)\not = \mbox{deg}_{o}(v)$, and analogously for $u,v\in B^*$.
\item[$\overline{ss}$]: $\overline{u}$ is the unique source (vertices without ingoing edges) and $\overline{v}$ is the unique sink (vertex without outgoing edges).
\end{itemize}
Let $A_{(r,S,\overline{ss})}(\widehat{K}_{n_1+r,n_2+r})$ denote the number of acyclic orientations of $\widehat{K}_{n_1+r,n_2+r}$ satisfying the conditions given above.
Condition $S$ could also be formulated the following way: the number of vertices with the same outdegree in a bipartite block $A\cup A^*$ resp.~ $B\cup B^*$ is contained in $S$. In Figure 2 we give an example with $n_1=n_2=4$, $r=2$, and $S=\{2,3,4\}$, which is associated with the sequence $\color{blue}{\{2,4\}}\color{red}{\{2,4,5\}}\color{blue}{\{1,5,3,6\}}\color{red}{\{1,3,6\}}$. We only draw the edges of $\widehat{K}_{4+2,4+2}$ that are oriented from the set $\widehat{A}$ to $\widehat{B}$. The edges that are not drawn are oriented from the set $\widehat{B}$ to $\widehat{A}$.
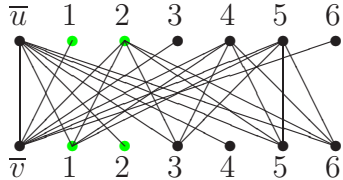
\begin{figure}[ht]
\setlength{\unitlength}{0.7cm}
\begin{picture}(9,4)
\put(0.8,3.3){$\overline{u}$}\put(1.8,3.3){$1$}\put(2.8,3.3){$2$}
\put(3.8,3.3){$3$}\put(4.8,3.3){$4$}\put(5.8,3.3){$5$}\put(6.8,3.3){$6$}
\put(0.8,0.4){$\overline{v}$}\put(1.8,0.4){$1$}\put(2.8,0.4){$2$}
\put(3.8,0.4){$3$}\put(4.8,0.4){$4$}\put(5.8,0.4){$5$}\put(6.8,0.4){$6$}
\put(1,3){\circle*{0.2}}\put(2,3){\textcolor{green}{\circle*{0.2}}}\put(3,3){\textcolor{green}{\circle*{0.2}}}\put(4,3){\circle*{0.2}}
\put(5,3){\circle*{0.2}}\put(6,3){\circle*{0.2}}\put(7,3){\circle*{0.2}}
\put(2,1){\textcolor{green}{\circle*{0.2}}}\put(3,1){\textcolor{green}{\circle*{0.2}}}\put(4,1){\circle*{0.2}}\put(5,1){\circle*{0.2}}
\put(6,1){\circle*{0.2}}\put(7,1){\circle*{0.2}}\put(1,1){\circle*{0.2}}
\put(1,3){\line(1,-2){1}}\put(1,3){\line(1,-1){2}}\put(1,3){\line(3,-2){3}}\put(1,3){\line(2,-1){4}}\put(1,3){\line(5,-2){5}}
\put(1,3){\line(3,-1){6}}\put(1,3){\line(0,-1){2}}
\put(3,3){\line(-1,-1){2}}\put(3,3){\line(-1,-2){1}}\put(3,3){\line(1,-2){1}}\put(3,3){\line(3,-2){3}}\put(3,3){\line(2,-1){4}}
\put(6,3){\line(-5,-2){5}}\put(6,3){\line(-2,-1){4}}\put(6,3){\line(-1,-1){2}}\put(6,3){\line(0,-1){2}}\put(6,3){\line(1,-2){1}}
\put(5,3){\line(-2,-1){4}}\put(5,3){\line(-3,-2){3}}\put(5,3){\line(-1,-2){1}}\put(5,3){\line(1,-2){1}}\put(5,3){\line(1,-1){2}}
\put(2,3){\line(-1,-2){1}}
\put(4,3){\line(-3,-2){3}}
\put(7,3){\line(-3,-1){6}}
\end{picture}
\caption{An acyclic orientation of $\widehat{K}_{4+2,4+2}$.}
\end{figure}
\begin{theorem}
We have
\begin{align}\label{ao}
A_{(r,S,\overline{ss})}(\widehat{K}_{n_1+r,n_2+r})=\sum_{k=0}^{\min(n_1,n_2)} (k+r)!^2 {n_1\brace k}_{S,r} {n_2\brace k}_{S,r}.
\end{align}
\end{theorem}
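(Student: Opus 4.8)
The plan is to set up an explicit bijection between the acyclic orientations counted by $A_{(r,S,\overline{ss})}(\widehat{K}_{n_1+r,n_2+r})$ and pairs consisting of an ordered $(S,r)$-partition of the $n_1+r$ vertices $A\cup A^*$ together with an ordered $(S,r)$-partition of the $n_2+r$ vertices $B\cup B^*$, both having the same number $k+r$ of blocks. The foundation is a structural lemma valid for acyclic orientations of \emph{any} complete bipartite graph: if two vertices lying in the same bipartite block have equal outdegree, then they have identical out-neighborhoods, and the out-neighborhoods of the vertices in a single block form a chain under inclusion. Both facts follow from the observation that any violation produces a directed $4$-cycle $x\to y\to x'\to y'\to x$. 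First I would prove this lemma and then use it to show that an orientation is completely determined by how the \emph{outdegree classes} of $\widehat{A}$ interact with those of $\widehat{B}$.

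Next I would analyze this class-level incidence pattern. Writing the $p$ outdegree classes of $\widehat{A}$ and the $q$ outdegree classes of $\widehat{B}$ in increasing order of outdegree, the chain condition forces the incidence matrix to be a staircase: class $A_i$ beats class $B_j$ exactly when $i\ge \alpha_j$ for strictly increasing thresholds $\alpha_1<\dots<\alpha_q$. The key step is to show that the hypotheses pin this down to $p=q$ with $A_i\to B_j \iff i\ge j$. Since the classes are grouped by outdegree they automatically have distinct outdegrees, which makes the dual thresholds $\beta_i=|\{j:\alpha_j\le i\}|$ strictly increasing; condition $\overline{ss}$ (no vertex of $\widehat{A}$ is a sink and no vertex of $\widehat{B}$ is a source) forces $\beta_1\ge 1$ and $\alpha_j\le p$. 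Together these force the $\alpha_j$ to realize every value in $\{1,\dots,p\}$, whence $q=p$ and $\alpha_j=j$. I would also record that $\overline{u}$ is the unique maximal class $A_p=\{\overline{u}\}$ and $\overline{v}$ the unique minimal class $B_1=\{\overline{v}\}$, and verify conversely that the triangular pattern always yields an acyclic orientation satisfying $\overline{ss}$, so that the inverse map is well defined.

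With the rigid triangular pattern in hand, I would then read off the combinatorial data. Deleting the forced singletons $\{\overline{u}\}$ and $\{\overline{v}\}$, the remaining outdegree classes give an ordered partition of $A\cup A^*$ into $k+r$ blocks and an ordered partition of $B\cup B^*$ into $k+r$ blocks, for a common $k$ (both sides have $p-1=k+r$ nontrivial classes). Condition $S$ says each such class has size in $S$, and condition $r$ says the $r$ special vertices $A^*$ (resp.\ $B^*$) lie in distinct classes; hence each ordered partition is precisely an ordered $(S,r)$-partition. The orientation is reconstructed from this pair of ordered partitions via the triangular rule, so the correspondence is a bijection. Counting is then immediate: the number of ordered $(S,r)$-partitions of $A\cup A^*$ into $k+r$ blocks equals $(k+r)!\,{n_1\brace k}_{S,r}$, and similarly for the $B$-side, giving
\begin{align*}
A_{(r,S,\overline{ss})}(\widehat{K}_{n_1+r,n_2+r})=\sum_{k}(k+r)!^2{n_1\brace k}_{S,r}{n_2\brace k}_{S,r},
\end{align*}
where $k$ ranges over $0\le k\le\min(n_1,n_2)$ since both Stirling factors must be nonzero.

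The main obstacle is the second paragraph: proving that the hypotheses collapse the a priori flexible staircase into the unique triangular pattern with $p=q$. In particular one must be careful that $p=q$ is \emph{not} automatic for arbitrary acyclic orientations (a sink in $\widehat{A}$ already breaks it), so it genuinely requires condition $\overline{ss}$ together with the distinct-outdegree property of the classes; isolating exactly where each hypothesis enters is the delicate part of the argument.
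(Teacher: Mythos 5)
Your proposal is correct, and it reaches the same underlying bijection as the paper (acyclic orientation $\leftrightarrow$ a pair of ordered partitions of $A\cup A^*$ and $B\cup B^*$ into a common number $k+r$ of blocks, interleaved alternately) but by a genuinely different route. The paper simply imports from Cameron, Glass and Schumacher the fact that every acyclic orientation of a complete bipartite graph arises from a linear order of the vertices, so that the orientation is encoded by the alternating sequence of maximal monochromatic runs; it then translates the three conditions into statements about that sequence and counts. You instead derive the block structure intrinsically from the orientation: the no-directed-$4$-cycle observation gives the chain condition on out-neighborhoods, the outdegree classes give the blocks, and the staircase/threshold analysis shows that $\overline{ss}$ together with the distinctness of class outdegrees forces the incidence pattern to be triangular with equally many classes on each side. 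Your blocks are exactly the paper's monochromatic runs, so the counts coincide. What your approach buys is self-containedness and precision: the paper's one-line appeal to the alternating-sequence picture leaves implicit both why the two sides end up with the same number of blocks and why the map is injective, whereas you isolate exactly which hypothesis ($\overline{ss}$ plus distinct class outdegrees) forces $p=q$ and $\alpha_j=j$, and you verify the inverse map. The only caveat is interpretive rather than mathematical: the paper's literal statement of condition $S$ (differences of outdegrees lie in $S$) is not what is actually used; like the paper's own proof, you rely on its reformulation that the outdegree classes within $A\cup A^*$ and $B\cup B^*$ have sizes in $S$, which is the reading consistent with the stated formula and the worked example.
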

\begin{proof}
We follow the proof of \cite{Cameron} and apply it to this particular case. Colour the vertices of $A$ red and those of $B$ blue. Any acyclic orientation of the complete bipartite graph $\widehat{K}_{n_1+r,n_2+r}$ can be obtained by ordering the vertices of the graph and orienting each edge from the smaller to larger index. In this arrangements red and blue sequences alternate. The order of vertices inside a sequence of the same colour is irrelevant, since there are no edges between those vertices. Hence, an acyclic orientation can be determined by an alternating sequence of red and blue blocks of the vertices of $\widehat{K}_{n_1+r,n_2+r}$. We now consider the conditions in turn. Condition [$S$] gives bounds on the size of the blocks of the same colour. Condition [$r$] forbids having two vertices from $A^*$ (resp.~$B^*$) in the same block. Condition [$\overline{ss}$] means that the alternating sequence starts with a red block containing the single element $\overline{u}$ and ends with the blue block containing the single vertex $\overline{v}$. Fix $k$, the number of the non-special blocks (blocks that do not contain any elements of $A^*$ resp.~ $B^*$).
We obtain the alternating sequence of the vertices of $\widehat{K}_{n_1+r,n_2+r}$ by determining an ordered partition of the $(n_1+r)$ red elements into $(k+r)$ blocks and an ordered partition of the $(n_2+r)$ blue elements into $(k+r)$ blocks satisfying the given special conditions. This can be done in $(k+r)!^2 {n_1\brace k}_{S,r} {n_2\brace k}_{S,r}$ ways. Summing over $k$ we obtain the theorem.
\end{proof}
\begin{remark}
There are several classically studied objects that count alternating sequence of blocks, such as lonesum matrices, Callan permutations, Vesztergombi permutations (permutations with a bound on the distance between every element and its image), permutations of $[n+k]$
with excedance set $[k]$, and so on (cf. \cite{BB,BH1,BH2,Brew,Cameron}). For instance, there is a natural bijection between lonesum matrices and acyclic orientations of complete bipartite graphs \cite{Cameron}. In every interpretation we can formulate conditions which correspond to the restrictions given by $r$ and the index set $S$. This theorem could be formulated for many other combinatorial objects using these well-studied bijections.
\end{remark}

\section{Some Applications in Special Polynomials}
The Stirling numbers of first and second kind arise in the closed expressions of poly-Bernoulli and poly-Cauchy numbers, number arrays that received a lot of attention recently in number theory and combinatrics. Here, we introduce a generalization of the poly-Bernoulli and poly-Cauchy numbers, the $(S,r)$-poly-Bernoulli, resp. $(S,r)$-poly-Cauchy numbers. The \emph{poly-Bernoulli numbers}  $\Ber_n^{(\mu)}$ were introduced by Kaneko \cite{Kaneko} using the exponential generating function
\begin{align*}
\frac{\li_\mu(1-e^{-t})}{1-e^{-t}}=\sum_{n=0}^{\infty}\Ber_n^{(\mu)}\frac{t^n}{n!}, \quad \mu \in \Z,
\end{align*}
where
\begin{align*}
\li_\mu(t)=\sum_{n=1}^\infty\frac{t^n}{n^\mu}
\end{align*}
is the $\mu$-th \emph{polylogarithm function}. If $\mu=1$ we get $\Ber_n^{(1)}=(-1)^n\mathbf{B}_n$ for $n\geq 0$, where $\mathbf{B}_n$ are the Bernoulli numbers.

Kaneko  \cite[Theorem 1]{Kaneko} found the following explicit formula for poly-Bernoulli numbers:
\begin{align}
\Ber_n^{(\mu)}=\sum_{k=0}^n{n \brace k}\frac{(-1)^{n-k}k!}{(k+1)^\mu}\,. \label{polber}
\end{align}

The poly-Bernoulli numbers have numerous applications in number theory. In particular, Arakawa and Kaneko \cite{AraKan} showed that the poly-Bernoulli numbers can be expressed as special values at negative arguments of  certain combinations of the generalized zeta function
$$\zeta(k_1, \dots, k_{n-1};s)=\sum_{0<m_1<m_2<\cdots < m_n} \frac{1}{m_1^{k_1}\cdots m_{n-1}^{k_{n-1}}m_n^s}.$$

As we mentioned in a previous section, in combinatorics the poly-Bernoulli numbers $\Ber_n^{(-k)}$ enumerate many objects.

\subsection{$(S,r)$-poly-Bernoulli numbers}
A natural generalization of Equation \eqref{polber} is by means of the $(S,r)$-Stirling numbers of the second kind.  In particular, we define the  \emph{$(S,r)$-poly-Bernoulli numbers}  by the expression:
\begin{align}\label{defipolyB}
\Ber_{n,S,r}^{(\mu)}=\sum_{k=0}^n{n \brace k}_{S, r}\frac{(-1)^{n-k}k!}{(k+1)^\mu}.
\end{align}
For convenience, put
$$
E_S(t)=\sum_{s\in S} \frac{t^{s}}{s!}=\sum_{i\geq 1} \frac{t^{k_i}}{k_i!}.
$$
Notice that  $E_{\Z^+}(t)=e^t-1$.

We can now give the generating function of $(S,r)$-poly-Bernoulli numbers  in terms of $E_S(t)$.
\begin{theorem}\label{teo1polyB}
The exponential generating function of $(S,r)$-poly-Bernoulli numbers is
$$
\sum_{n=0}^\infty  \Ber_{n,S,r}^{(\mu)}\frac{t^n}{n!}=\left(E_{S-\vec{1}}(-t)\right)^r \frac{{\rm Li}_\mu\bigl(-E_S(-t) \bigr)}{-E_S(-t)}.
$$
\end{theorem}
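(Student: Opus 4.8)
The plan is to substitute the defining sum \eqref{defipolyB} directly into the exponential generating function and reduce the whole computation to the known column generating function \eqref{Sgenfunc}. First I would write
$$
\sum_{n=0}^\infty \Ber_{n,S,r}^{(\mu)}\frac{t^n}{n!} = \sum_{n=0}^\infty\sum_{k=0}^n {n\brace k}_{S,r}\frac{(-1)^{n-k}k!}{(k+1)^\mu}\frac{t^n}{n!},
$$
and then interchange the order of summation, summing over $n\ge k$ for each fixed $k$. This interchange is harmless because ${n\brace k}_{S,r}=0$ for $n<k$, so the double sum is genuinely triangular. The result is
$$
\sum_{k=0}^\infty \frac{k!}{(k+1)^\mu}\sum_{n=k}^\infty {n\brace k}_{S,r}\frac{(-1)^{n-k}t^n}{n!}.
$$

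The key manipulation is to absorb the sign. Writing $(-1)^{n-k}t^n=(-1)^k(-t)^n$, the inner sum becomes $(-1)^k$ times the left-hand side of \eqref{Sgenfunc} evaluated at $x=-t$. Recalling that $E_{S-\vec{1}}(x)=\sum_{s\in S}x^{s-1}/(s-1)!$ and $E_S(x)=\sum_{s\in S}x^{s}/s!$, I would invoke \eqref{Sgenfunc} to obtain
$$
\sum_{n=k}^\infty {n\brace k}_{S,r}\frac{(-1)^{n-k}t^n}{n!}=\frac{1}{k!}\left(E_{S-\vec{1}}(-t)\right)^r\left(-E_S(-t)\right)^k.
$$
Substituting this back cancels the factor $k!$ and pulls the $r$-th power of $E_{S-\vec{1}}(-t)$ outside the sum over $k$, leaving
$$
\left(E_{S-\vec{1}}(-t)\right)^r\sum_{k=0}^\infty \frac{\left(-E_S(-t)\right)^k}{(k+1)^\mu}.
$$

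Finally I would recognize the remaining series as a shifted polylogarithm. Setting $z=-E_S(-t)$ and reindexing with $m=k+1$ gives $\sum_{k\ge0}z^k/(k+1)^\mu=z^{-1}\sum_{m\ge1}z^m/m^\mu=\li_\mu(z)/z$, which is exactly $\li_\mu(-E_S(-t))/(-E_S(-t))$, yielding the claimed formula. I do not expect a genuine obstacle here: this is a formal generating-function computation, and the only points requiring care are the bookkeeping of the sign $(-1)^{n-k}$ and the index shift $m=k+1$ that converts the $1/(k+1)^\mu$ weights into $\li_\mu$. As a technical remark, since $S\subseteq\Z^+$ the series $E_S(-t)$ has no constant term, so $\li_\mu(-E_S(-t))/(-E_S(-t))$ is a well-defined formal power series in $t$ and every interchange of summation above is legitimate in the ring of formal power series.
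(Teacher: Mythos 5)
Your proposal is correct and follows essentially the same route as the paper's own proof: interchange the triangular double sum, absorb the sign via $(-1)^{n-k}t^n=(-1)^k(-t)^n$, invoke the column generating function \eqref{Sgenfunc} at $x=-t$, and identify the remaining series $\sum_{k\ge 0}z^k/(k+1)^\mu$ with $\li_\mu(z)/z$ for $z=-E_S(-t)$. The closing remark that $E_S(-t)$ has no constant term, so everything is legitimate formally, is a small point the paper leaves implicit.
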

\begin{proof}
By definition \eqref{defipolyB} and using (\ref{def:rrs2}), we have
\begin{align*}
\sum_{n=0}^\infty  \Ber_{n,S,r}^{(\mu)}\frac{t^n}{n!}&=\sum_{n=0}^\infty\sum_{k=0}^n{n \brace k}_{S, r}\frac{(-1)^{n-k}k!}{(k+1)^\mu}\frac{t^n}{n!}=\sum_{k=0}^\infty\frac{(-1)^k k!}{(k+1)^\mu}\sum_{n=k}^\infty{n \brace k}_{S, r}\frac{(-t)^n}{n!}\\
&=\sum_{k=0}^\infty\frac{(-1)^k k!}{(k+1)^\mu}\frac{1}{k!}\left(\sum_{i\geq 1} \frac{(-t)^{k_i-1}}{(k_i-1)!}\right)^r\left(\sum_{i\geq 1} \frac{(-t)^{k_i}}{k_i!}\right)^k\\
&=\left(E_{S-\vec{1}}(-t)\right)^r\sum_{k=0}^\infty\frac{(-E_S(-t))^k}{(k+1)^\mu}=\left(E_{S-\vec{1}}(-t)\right)^r \frac{{\rm Li}_\mu\bigl(-E_S(-t) \bigr)}{-E_S(-t)}\,.
\end{align*}
\end{proof}

The following exponential generating functions  follow from some particular cases of $S$. We use the notation
$$
E_m(t)=1+t+\frac{t^2}{2!}+\cdots+\frac{t^m}{m!},
$$
with $E_0=1$, to denote partial sums of the Taylor series for $e^x$. Moreover, let $\E$ and $\Odd$ denote the even and odd positive integers, respectively. We then have the generating functions
\begin{align*}
\sum_{n=0}^\infty \Ber_{n,\Z^+, r}^{(\mu)}\frac{t^n}{n!}&=\frac{e^{-rt}{\rm Li}_\mu\bigl(1-e^{-t}\bigr)}{1-e^{-t}}\,,\\
\sum_{n=0}^\infty \Ber_{n,\leq m,r}^{(\mu)}\frac{t^n}{n!}&=\frac{\bigl(E_{m-1}(-t)\bigr)^r{\rm Li}_\mu\bigl(1-E_{m}(-t)\bigr)}{1-E_{m}(-t)}\,,\\
\sum_{n=0}^\infty \Ber_{n,\geq m,r}^{(\mu)}\frac{t^n}{n!}&=\frac{\bigl(e^{-t}-E_{m-2}(-t)\bigr)^r{\rm Li}_\mu\bigl(E_{m-1}(-t)-e^{-t}\bigr)}{E_{m-1}(-t)-e^{-t}}\,,\\
\sum_{n=0}^\infty \Ber_{n,\E,r}^{(\mu)}\frac{t^n}{n!}&=\frac{\bigl(-\sinh t \bigr)^r{\rm Li}_\mu\bigl(1-\cosh(t)\bigr)}{1-\cosh t}\,,\\
\sum_{n=0}^\infty \Ber_{n,\Odd,r}^{(\mu)}\frac{t^n}{n!}&=\frac{\bigl(\cosh t\bigr)^r{\rm Li}_\mu\bigl(\sinh t \bigr)}{\sinh t}\,.
\end{align*}

The numbers $\Ber_{n,\leq m,r}^{(\mu)}$ and $\Ber_{n,\geq m,r}^{(\mu)}$ are called the \emph{incomplete $r$-poly-Bernoulli number}s, and were studied in detail by Komatsu and Ram\'irez \cite{KJL2}. The particular case $r=0$ was  studied by Komatsu et al. \cite{KLM}.

\subsection{$(S,r)$-poly-Cauchy numbers}
Komatsu \cite{Komc} introduced the \emph{poly-Cauchy numbers of the first kind}, $c_{n}^{(\mu)}$, through the  expression
\begin{align*}
c_{n}^{(\mu)}=\underbrace{\int_{0}^1 \cdots \int_{0}^1}_{\mu}(t_1\cdots t_\mu)_n \,dt_1\cdots dt_\mu.
\end{align*}
Here, $(t)_n$ is the falling factorial defined by $(t)_n=t(t-1)\cdots (t-n+1), n\geq 1,$ and $(t)_0=1$. The exponential generating function of $c_{n}^{(\mu)}$ is
\begin{align*}
\lif_\mu(\ln(1+t))=\sum_{n=0}^{\infty}c_n^{(\mu)}\frac{t^n}{n!}, \quad (\mu\in \Z)
\end{align*}
where
$$\lif_\mu(t)=\sum_{n=0}^\infty\frac{t^n}{n!(n+1)^\mu}$$
is the $\mu$-th \emph{polylogarithm factorial function}. The sequence $c_{n}^{(\mu)}$ is a generalization of the classical \emph{Cauchy numbers} $c_n$.  In particular, with $\mu=1$, we have $c_n^{(1)}=c_n$.   See  \cite{Comtet, Merlini} for general information about Cauchy numbers.

The poly-Cauchy numbers of the first kind  can be defined in terms of Stirling number of the first kind ${n \brack k}$ using the  formula
\begin{align*}
c_n^{(\mu)}=\sum_{k=0}^n{n \brack k}\frac{(-1)^{n-k}}{(k+1)^\mu}\,.
\end{align*}

We define the \emph{$(S,r)$-poly-Cauchy numbers of the first kind} by the expression:
\begin{align}
c_{n,S,r}^{(\mu)}&=\sum_{k=0}^n{n \brack k}_{S, r}\frac{(-1)^{n-k}}{(k+1)^\mu}\,. \label{rc1}
\end{align}

For convenience, put
$$
F_S(t)=\sum_{s\in S}(-1)^{s+1} \frac{t^{s}}{s}=\sum_{i\geq 1}(-1)^{i+1} \frac{t^{k_i}}{k_i},
$$
with $F_{0}=0$. Notice that  $F_{\mathbb{Z}^+}=\ln(1+t)$.

The exponential generating function of the $(S,r)$-poly-Cauchy numbers of the first kind can be given in terms of $F_S(t)$.
\begin{theorem}
The exponential generating function of the $(S,r)$-poly-Cauchy numbers of the first kind is
\begin{align}
\sum_{n=0}^\infty c_{n,S,r}^{(\mu)}\frac{t^n}{n!}=\left(\sum_{s\in S} (-t)^{s-1}\right)^r{\rm Lif}_\mu\bigl(F_S(t)\bigr)\,.
\label{gfrrpcn}
\end{align}
\label{th:gfrpcn}
\end{theorem}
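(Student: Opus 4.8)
The plan is to mirror the proof of Theorem \ref{teo1polyB}, exchanging the roles of the second-kind and first-kind $(S,r)$-Stirling numbers and using the generating function \eqref{Sgenfunc2} in place of \eqref{def:rrs2}. First I would substitute the definition \eqref{rc1} into the left-hand side and interchange the (formal) order of summation, writing
$$
\sum_{n=0}^\infty c_{n,S,r}^{(\mu)}\frac{t^n}{n!}=\sum_{k=0}^\infty\frac{1}{(k+1)^\mu}\sum_{n=k}^\infty{n\brack k}_{S,r}\frac{(-1)^{n-k}t^n}{n!}.
$$
The interchange is justified formally because ${n\brack k}_{S,r}=0$ for $n<k$, exactly as in the second-kind case.

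The key bookkeeping step is to absorb the sign $(-1)^{n-k}$ by splitting it as $(-1)^{n-k}=(-1)^n(-1)^k$ and folding $(-1)^n t^n=(-t)^n$ into the argument of the inner series. This turns the inner sum into $\sum_{n=k}^\infty{n\brack k}_{S,r}(-t)^n/n!$ multiplied by a leftover factor $(-1)^k$. Applying \eqref{Sgenfunc2} with $x=-t$ then evaluates this inner series as $\tfrac{1}{k!}\bigl(\sum_{s\in S}(-t)^{s-1}\bigr)^r\bigl(\sum_{s\in S}(-t)^s/s\bigr)^k$.

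Next I would use the crucial sign identity $\sum_{s\in S}(-t)^s/s=-\sum_{s\in S}(-1)^{s+1}t^s/s=-F_S(t)$, so that the $k$-th power contributes $(-1)^kF_S(t)^k$. This second factor of $(-1)^k$ cancels the one produced in the previous step, leaving every sign resolved. Finally I would pull the $r$-fold product $\bigl(\sum_{s\in S}(-t)^{s-1}\bigr)^r$ outside the $k$-sum and recognize the remaining series $\sum_{k=0}^\infty F_S(t)^k/\bigl(k!(k+1)^\mu\bigr)$ as $\lif_\mu\bigl(F_S(t)\bigr)$, directly from the definition of the polylogarithm factorial function. This yields \eqref{gfrrpcn}.

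The main obstacle is purely the careful tracking of the two independent sources of $(-1)^k$ and confirming that they cancel; everything else is a direct specialization of \eqref{Sgenfunc2} together with the definitions of $F_S(t)$ and $\lif_\mu$. In particular, no genuinely new analytic input is required beyond the formal manipulations, so the argument is essentially parallel to the poly-Bernoulli computation in Theorem \ref{teo1polyB}, with $E_S(-t)$ replaced by $F_S(t)$ and the exponential-type weights $k!$ of the second kind replaced by the cycle-type weights already built into \eqref{Sgenfunc2}.
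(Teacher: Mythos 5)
Your proposal is correct and follows essentially the same route as the paper's own proof: interchange the order of summation, substitute $x=-t$ into \eqref{Sgenfunc2}, and observe that the $(-1)^k$ from the sign split cancels against the $(-1)^k$ arising from $\sum_{s\in S}(-t)^s/s=-F_S(t)$, leaving $\lif_\mu\bigl(F_S(t)\bigr)$ by definition. No substantive differences.
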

\begin{proof}
From definition \eqref{rc1} and using (\ref{Sgenfunc2}), we have
\begin{align*}
\sum_{n=0}^\infty  c_{n,S,r}^{(\mu)}\frac{t^n}{n!}&=\sum_{n=0}^\infty\sum_{k=0}^n{n \brack k}_{S, r}\frac{(-1)^{n-k}}{(k+1)^\mu}\frac{t^n}{n!}=\sum_{k=0}^\infty\frac{(-1)^k}{(k+1)^\mu}\sum_{n=k}^\infty{n \brack k}_{S, r}\frac{(-t)^n}{n!}\\
&=\sum_{k=0}^\infty\frac{(-1)^k}{(k+1)^\mu}\frac{1}{k!}\left(\sum_{s\in S} (-t)^{s-1}\right)^r\left(\sum_{s \in S} \frac{(-t)^{s}}{s}\right)^k\\
&=\left(\sum_{s\in S} (-t)^{s-1}\right)^r\sum_{k=0}^\infty\frac{1}{k!(k+1)^\mu}\left(\sum_{s \in S} \frac{(-1)^{s+1}t^s}{s}\right)^k\\
&=\left(\sum_{s\in S} (-t)^{s-1}\right)^r{\rm Lif}_\mu\bigl(F_S(t)\bigr)\,.
\end{align*}
\end{proof}

The following exponential generating functions  follow from some particular cases of $S$.
\begin{align*}
\sum_{n=0}^\infty c_{n,\Z^+,r}^{(\mu)}\frac{t^n}{n!}&=\frac{1}{(1+t)^r}{\rm Lif}_\mu\bigl(\ln (1+t)\bigr)\,,\\
\sum_{n=0}^\infty c_{n,\leq m,r}^{(\mu)}\frac{t^n}{n!}&=\left(\frac{1-(-t)^m}{1+t}\right)^r{\rm Lif}_\mu\bigl(F_m(t)\bigr)\,,\\
\sum_{n=0}^\infty  c_{n,\geq m,r}^{(\mu)}\frac{t^n}{n!}&=\left(\frac{(-t)^{m-1}}{1+t}\right)^r{\rm Lif}_\mu\bigl(\ln(1+t)-F_{m-1}(t)\bigr)\,,
\end{align*}
where
$$
F_m(t)=t-\frac{t^2}{2}+\cdots-\frac{(-t)^m}{m},
$$
with $F_0=0$.  The numbers $c_{n,\leq m,r}^{(\mu)}$ and $c_{n,\geq m,r}^{(\mu)}$ are called \emph{incomplete Cauchy numbers} \cite{KJL2}. The particular case $r=0$ was  studied in \cite{Kom2016a}. Moreover,  if $r=0$ and $S=\mathbb{Z}^+$ the generating function reduces to the generating function of the poly-Cauchy numbers (\cite[Theorem 2]{Komc}):
$$
{\rm Lif}_\mu\bigl(\ln(1+t)\bigr)=\sum_{n=0}^\infty c_n^{(\mu)}\frac{t^n}{n!}.
$$

The poly-Cauchy numbers of the second kind $\widehat c_n^{(\mu)}$ \cite[Theorem 4]{Komc} can be also defined by means of Stirling numbers of the first kind:
$$
\widehat c_n^{(\mu)}=\sum_{k=0}^n{n \brack k}\frac{(-1)^{n}}{(k+1)^\mu}\,.
$$
When $\mu=1$, $\widehat  c_n= \widehat  c_n^{(1)}$ are the classical Cauchy numbers of the second kind:
$$
\widehat c_n=\sum_{k=0}^n{n \brack k}\frac{(-1)^{n}}{k+1}=\int_{0}^1t(t+1)\cdots (t+n-1)dt\,.
$$
The generating function of the Cauchy numbers of the second kind is
$$\frac{t}{(1+t)\ln (1+t)}=\sum_{n=0}^\infty \widehat c_n \frac{t^n}{n!}.$$

We define the \emph{$(S,r)$-poly-Cauchy numbers of the second kind} by the expression
\begin{align}
\widehat  c_{n,S,r}^{(\mu)}&=\sum_{k=0}^n{n \brack k}_{S, r}\frac{(-1)^{n}}{(k+1)^\mu}\,. \label{rc1bb}
\end{align}

\begin{theorem}
The exponential generating function of $(S,r)$-poly-Cauchy numbers of the second kind  is
\begin{align}
\sum_{n=0}^\infty \widehat c_{n,S,r}^{(\mu)}\frac{t^n}{n!}=\left(\sum_{s\in S} (-t)^{s-1}\right)^r{\rm Lif}_\mu\bigl(-F_S(t)\bigr)\,.
\end{align}
\label{th:gfrpcn2b}
\end{theorem}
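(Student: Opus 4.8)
The plan is to mirror, almost verbatim, the computation carried out for the $(S,r)$-poly-Cauchy numbers of the first kind in Theorem~\ref{th:gfrpcn}; the sole genuine difference lies in the sign bookkeeping. First I would start from the defining expression \eqref{rc1bb}, multiply by $t^n/n!$, and sum over $n$. The crucial initial observation is that here the numerator is $(-1)^n$ rather than $(-1)^{n-k}$, so that $(-1)^n t^n=(-t)^n$ and, in contrast to the first-kind case, \emph{no} compensating factor of $(-1)^k$ is produced. Interchanging the order of summation then gives
\begin{equation*}
\sum_{n=0}^\infty \widehat c_{n,S,r}^{(\mu)}\frac{t^n}{n!}
=\sum_{k=0}^\infty\frac{1}{(k+1)^\mu}\sum_{n=k}^\infty{n \brack k}_{S,r}\frac{(-t)^n}{n!}.
\end{equation*}

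Next I would substitute the generating function \eqref{Sgenfunc2} evaluated at $x=-t$, namely
\begin{equation*}
\sum_{n=k}^\infty{n \brack k}_{S,r}\frac{(-t)^n}{n!}=\frac{1}{k!}\left(\sum_{s\in S}(-t)^{s-1}\right)^r\left(\sum_{s\in S}\frac{(-t)^s}{s}\right)^k,
\end{equation*}
and pull the $k$-independent factor $\bigl(\sum_{s\in S}(-t)^{s-1}\bigr)^r$ outside the sum over $k$. The concluding step is to identify the two remaining pieces: since $(-1)^s t^s=(-t)^s$, we have $\sum_{s\in S}\frac{(-t)^s}{s}=-\sum_{s\in S}(-1)^{s+1}\frac{t^s}{s}=-F_S(t)$, whereupon $\sum_{k=0}^\infty\frac{1}{k!(k+1)^\mu}\bigl(-F_S(t)\bigr)^k=\lif_\mu\bigl(-F_S(t)\bigr)$ directly from the definition of the polylogarithm factorial function. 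Assembling these factors yields the claimed formula.

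There is no real obstacle here beyond correctly tracking the sign, which is precisely the one place where the second-kind argument diverges from the first-kind one. In the first-kind computation the surviving factor $(-1)^k$ is absorbed into the $k$-th power and flips $-F_S(t)$ into $F_S(t)$; here, with $(-1)^n$ in place of $(-1)^{n-k}$, that factor never appears, so the argument of the polylogarithm factorial function remains $-F_S(t)$. (If desired, the interchange of summation can be justified formally at the level of power series, exactly as in the preceding theorems.)
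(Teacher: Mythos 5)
Your proposal is correct and is exactly the computation the paper intends: the paper states Theorem~\ref{th:gfrpcn2b} without proof precisely because it follows from the definition \eqref{rc1bb} and the generating function \eqref{Sgenfunc2} by the same steps as Theorem~\ref{th:gfrpcn}, and you have tracked the one genuine difference — the absent factor $(-1)^k$, which leaves the argument of ${\rm Lif}_\mu$ as $-F_S(t)$ — correctly.
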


The following exponential generating functions  follow from some particular cases of $S$.
\begin{align*}
\sum_{n=0}^\infty c_{n,\Z^+,r}^{(\mu)}\frac{t^n}{n!}&=\frac{1}{(1+t)^r}{\rm Lif}_\mu\bigl(-\ln (1+t)\bigr)\,,\\
\sum_{n=0}^\infty\widehat c_{n,\leq m,r}^{(\mu)}\frac{t^n}{n!}&=\left(\frac{1-(-t)^m}{1+t}\right)^r{\rm Lif}_\mu\bigl(-F_m(t)\bigr) \,,\\
\sum_{n=0}^\infty\widehat c_{n,\geq m,r}^{(\mu)}\frac{t^n}{n!}&=\left(\frac{(-t)^{m-1}}{1+t}\right)^r{\rm Lif}_\mu\bigl(-\ln(1+t)+F_{m-1}(t)\bigr)\,.
\end{align*}
If $r=0$ and $S=\mathbb{Z}^+$ the generating function  reduces to the generating function of the poly-Cauchy numbers (\cite[Theorem 5]{Komc})
$$
{\rm Lif}_\mu\bigl(-\ln(1+t)\bigr)=\sum_{n=0}^\infty\widehat c_n^{(\mu)}\frac{t^n}{n!}.
$$

\section*{Acknowledgments}
The authors would like to thank the anonymous referees for  reading carefully the paper and giving helpful comments and suggestions.  The research of Jos\'e L. Ram\'irez was partially supported by Universidad Nacional de Colombia, Project No. 37805.

\end{document}